\documentclass[12pt]{article}
\usepackage{hyperref}
\usepackage{cite}
\usepackage[utf8]{inputenc}
\usepackage[english]{babel}
\usepackage{amsmath}
\usepackage{amsfonts}
\usepackage{amssymb}
\usepackage{amsthm}
\usepackage{xy}
\author{Jaime Castro P\'erez\footnote{jcastrop@itesm.mx,\;corresponding\;author} \\ \textit{Escuela de Ingenier\'ia y Ciencias, Instituto Tecnol\'ologico y de} \\ \textit{Estudios Superiores de Monterrey} \\ \textit{Calle del Puente 222, Tlalpan, 14380, M\'exico D.F., M\'exico.} \\ Mauricio Medina B\'arcenas\footnote{mmedina@matem.unam.mx} \; Jos\'e R\'ios Montes\footnote{jrios@matem.unam.mx} \\  Angel Zald\'ivar\footnote{zaldivar@matem.unam.mx}\\ \textit{Instituto de Matem\'aticas, Universidad Nacional} \\ \textit{Aut\'onoma de M\'exico} \\ \textit{Area de la Investigaci\'on Cient\'ifica, Circuito Exterior, C.U.,} \\ \textit{04510, M\'exico D.F., M\'exico.}}
\title{ON THE STRUCTURE OF GOLDIE MODULES}

\theoremstyle{plain}
\newtheorem{thm}{\protect\theoremname}[section]
  \theoremstyle{definition}
  \newtheorem{rem}[thm]{\protect\remarkname}
 \theoremstyle{definition}
  \newtheorem{example}[thm]{\protect\examplename}
\theoremstyle{definition}
  
  \theoremstyle{plain}
  \newtheorem{cor}[thm]{\protect\corollaryname}
  \theoremstyle{plain}
  \newtheorem{lemma}[thm]{\protect\lemmaname}
  \theoremstyle{plain}
  \newtheorem{prop}[thm]{\protect\propositionname}
    \theoremstyle{definition}
  \newtheorem{dfn}[thm]{\protect\definitionname}

\makeatother

\providecommand{\lemmaname}{Lemma}
\providecommand{\corollaryname}{Corollary}
\providecommand{\propositionname}{Proposition}
\providecommand{\definitionname}{Definition}
\providecommand{\theoremname}{Theorem}
\providecommand{\remarkname}{Remark}
\providecommand{\examplename}{Example}

\newcommand{\E}{E^{[M]}}
\newcommand{\sm}{\sigma[M]}
\newcommand{\Hom}{Hom_R}

\begin{document}
\maketitle

\begin{abstract}
Given a semiprime Goldie module $M$ projective in $\sm$ we study decompositions on its $M$-injective hull $\widehat{M}$ in terms of the minimal prime in $M$ submodules. With this, we characterize the semiprime Goldie modules in $\mathbb{Z}$-Mod and make a decomposition of the endomorphism ring of $\widehat{M}$. Also, we investigate the relations among semiprime Goldie modules, $QI$-modules and co-semisimple modules extending results on left $QI$-rings and $V$-rings.

\end{abstract}

\section{Introduction and preliminars}

In this paper we study in a deeper way the Goldie modules structure. Goldie Modules where introduce in \cite{maugoldie} as a generalization of left Goldie ring. A Goldie module is such that it satisfies ACC on left annihilators and has finite uniform dimension. In \cite{mauacc} are studied, in general modules which satisfy ACC on left annihilators; in this article the authors show that a module $M$ projective in $\sm$ and semiprime which satisfies ACC on left annihilators has finitely many minimal prime submodules \cite[Theorem 2.2]{mauacc}. Using this fact, if $M$ is a semiprime Goldie module projective in  $\sm$ we study in this manuscript the relation between the $M$-injective hull of $M$ and the $M$-injective hulls of the factor modules given by the minimal prime submodules of $M$. We prove in Corollary \ref{4}.\textit{1} that the $M$-injective hull is isomorphic to the direct sum of the $M$-injective hulls of the factor modules given by the minimal prime submodules of $M$. Moreover the summands of this decomposition are homological independent.

In \cite[Proposition 2.25]{maugoldie} is proved that if $M$ is a semiprime Goldie module projective in $\sm$ with $P_1,...,P_n$ its minimal prime submodules the each $M/P_i$ is a prime Goldie module. With that, we prove in Theorem \ref{10} that the endomorphism ring of the $M$-injective hull of $M$ is a direct product of simple artinian rings  and each simple artinian ring is a right quotient ring.

We investigate when a quasi-injective module in $\sm$ is injective provided $M$ is a semiprime Goldie module, this in order to prove when a semiprime Goldie module is a QI-module and generalize some results given by Faith in \cite{faithhereditary}.

Talking about QI-modules arise co-semisimple modules. We give some results on co-semisimple Goldie modules, we show that every indecomposable injective summand of the $M$-injective hull of a co-semisimple Goldie module is FI-simple Proposition \ref{1.16}, also we see that every co-semisimple prime Goldie module is FI-simple Proposition \ref{1.12}. This gives as consequence that if $M$ is a co-semisimple module then $M$ has finitely many prime submodules and every factor $M/N$ with $N$ a fully invariant submodule of $M$ is a semiprime Goldie module.

We organized the content of this paper as follows: Fist section is this introduction and the necessary preliminars for the develop of this theory.

Second section concerns to give some results on the structure of semiprime Goldie modules, we do this showing some decompositions in their $M$-injective hulls. Also, we show examples of semiprime Goldie modules and characterize all of them in $\mathbb{Z}$-Mod.

In section three we study quasi-injective modules in $\sm$ and $QI$-modules. We give conditions on a semiprime Goldie module in order to every quasi-injective module to be injective in $\sm$. Also at the end of this section we give some observations on hereditary semirime Goldie modules. 

The last section describe co-semisimple Goldie modules, here is shown that a co-semisimple Goldie module is isomorphic to a direct product of prime Goldie $FI$-simple modules.

Through this paper $R$ will denote an associative ring with unitary element and all $R$-modules are left unital modules. $R$-Mod denotes the category of left $R$-modules. For a submodule $N$ of a module $M$ we write $N\leq M$ if $N$ is a proper submodule we just write $N<M$ and if it is essential $N\leq_eM$. A fully invariant submodule $N$ of $M$, denoted $N\leq_{FI}M$ is a such that $f(N)\leq N$ for all $f\in End_R(M)$. When a module has no nonzero fully invariant proper submodules it is called FI-simple.

Remember that if $_RM$ is an $R$-module, $\sm$ denotes the full subcategory of $R$-Mod consisting of those modules $M$-subgenerated (see \cite{wisbauerfoundations}). To give a better develop of the content, for $X$ an $R$-module and $K\in\sigma[X]$ we will denote by $E^{[X]}(K)$ the $X$-injective hull of $K$. When $X=K$ we just write $E^{[X]}(X)=\widehat{X}$.

A module $X\in\sm$ is called $M$-singular if there exists a exact sequence
\[0\to K\to L\to X\to 0\]
such that $K\leq_eL$. Every module $N$ in $\sm$ has a largest $M$-singular submodule $\mathcal{Z}(N)$, if $\mathcal{Z}(N)=0$ then we say that $N$ is non $M$-singular.

Given two submodules $N,K\leq M$, in \cite{BicanPr} is defined the product of $N$ and $K$ in $M$ as 
\[N_MK=\sum\{f(N)|f\in\Hom(M,K)\}\]

In \cite[Proposition 1.3]{PepeGab} can be found some proprieties of this product. In general, this product is not associative but if $M$ is projective in $\sm$ it is \cite[Proposition 5.6]{beachy2002m}. Moreover

\begin{lemma}
Let $M$ be projective in $\sm$. Let $K\leq M$ and $\{X_i\}_I$ a family of modules in $\sm$. Then 
\[K_M\left(\sum_IX_i\right)=\sum\left(K_M{X_i}\right)\]
\end{lemma}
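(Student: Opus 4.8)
The plan is to prove the distributivity of the product $K_M(-)$ over arbitrary direct sums by a double-inclusion argument, leaning on the explicit description $N_MK=\sum\{f(N)\mid f\in\Hom(M,K)\}$ together with projectivity of $M$ in $\sm$. First I would establish the easy inclusion $\sum_I(K_MX_i)\subseteq K_M(\sum_IX_i)$: for each index $i$, any $f\in\Hom(M,X_i)$ may be composed with the canonical inclusion $X_i\hookrightarrow\sum_IX_i$, so $f(K)$ already appears among the summands defining $K_M(\sum_IX_i)$; hence each $K_MX_i$ is contained in the right-hand side, and so is their sum.

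The substantive inclusion is $K_M(\sum_IX_i)\subseteq\sum_I(K_MX_i)$. Here I would take an arbitrary $f\in\Hom(M,\sum_IX_i)$ and analyze $f(K)$. Since $M$ is projective in $\sm$ and $\sum_IX_i$ lies in $\sm$, I can lift through the natural surjection $\bigoplus_IX_i\twoheadrightarrow\sum_IX_i$ — actually, since the $X_i$ sit inside the ambient module, it is cleaner to observe directly that $f(M)$, being a homomorphic image of $M$, is finitely generated is \emph{not} guaranteed, so instead I would use that $K\leq M$ gives $f(K)\leq f(M)\leq\sum_IX_i$; an arbitrary element of $f(K)$ lies in a \emph{finite} subsum $\sum_{i\in F}X_i$. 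The real point is to peel off the components: composing $f$ with the projections $\pi_i:\sum_IX_i\to X_i$ (which exist because the sum is direct) yields maps $\pi_i\circ f\in\Hom(M,X_i)$, and for $x\in K$ one has $f(x)=\sum_i(\pi_i f)(x)$ with all but finitely many terms zero. Thus $f(x)\in\sum_i(\pi_if)(K)\subseteq\sum_I(K_MX_i)$, and since this holds for every $x\in K$ and every $f$, we get $f(K)\subseteq\sum_I(K_MX_i)$, whence $K_M(\sum_IX_i)\subseteq\sum_I(K_MX_i)$. Combining the two inclusions gives the claimed equality.

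The main obstacle is making sure projectivity of $M$ in $\sm$ is actually used where needed rather than silently assumed: the decomposition $f(x)=\sum_i(\pi_if)(x)$ is fine for a genuine (internal) direct sum, but if one wants this for the coproduct $\bigoplus_IX_i$ formed abstractly and then mapped into the ambient module, one needs $M$ projective in $\sm$ to lift $f:M\to\sum_IX_i$ to $\tilde f:M\to\bigoplus_IX_i$ before applying the coproduct projections — this is the step that can fail for non-projective $M$. I would therefore phrase the argument so that the only property invoked is the existence of the projections $\pi_i$ on $\bigoplus_IX_i$ and the lifting of $f$ along $\bigoplus_IX_i\to\sum_IX_i$, citing projectivity in $\sm$ for the latter; everything else is routine. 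One should also double-check the degenerate cases $K=0$ and $I=\emptyset$, which are immediate from the convention that empty sums are $0$.
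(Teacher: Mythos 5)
Your final formulation coincides with the paper's own proof: the easy inclusion follows by composing maps $M\to X_i$ with the inclusions $X_i\hookrightarrow\sum_IX_i$, and for the reverse inclusion one lifts an arbitrary $f:M\to\sum_IX_i$ along the canonical epimorphism $g:\bigoplus_IX_i\to\sum_IX_i$ (both modules lie in $\sm$, so this is exactly where projectivity of $M$ in $\sm$ is used), writes the lift in components $f_i:M\to X_i$, and concludes $f(K)\subseteq\sum_If_i(K)\subseteq\sum_I\left(K_MX_i\right)$. What you must excise is the intermediate claim that the projections $\pi_i:\sum_IX_i\to X_i$ ``exist because the sum is direct'': directness is not a hypothesis here. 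The notation $\sum_IX_i$ means an arbitrary internal sum of submodules of some module in $\sm$, the $X_i$ may overlap, and no such projections exist in general; moreover, if the sum were direct the identity would hold for any $M$ with no projectivity assumption, so that reading empties the lemma of its content. The only valid route is the one you settle on in your last paragraph --- lift first, then apply the coproduct projections of $\bigoplus_IX_i$ --- which is precisely the paper's argument; with that correction (and discarding the unnecessary digression about elements of $f(K)$ lying in finite subsums) the proof is complete.
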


\begin{proof}
By \cite[Proposition 1.3]{PepeGab} we have that $\sum\left(K_M{X_i}\right)\leq K_M\left(\sum_IX_i\right)$. 

Now, let $f:M\to \sum_IX_i$. We have a canonical epimorphism $g:\bigoplus_IX_i\to\sum_IX_i$. Since $M$ is projective in $\sm$ there exist $\{f_i:M\to X_i\}_I$ such that $g\circ\bigoplus_If_i=f$. Hence 
\[f(K)=(g\circ\bigoplus_If_i)(K)=\sum f_i(K)\leq\sum(K_M{X_i})\]
Thus $K_M\left(\sum_IX_i\right)\leq\sum\left(K_M{X_i}\right)$.
\end{proof}
 
With this product, in \cite{raggiprime} and \cite{raggisemiprime} the authors define prime and semiprime submodule respectively in the following way

\begin{dfn}
Let $M$ be an $R$-module and $P\leq_{FI}M$. It is said $P$ is a prime submodule in $M$ if whenever $N_MK\leq P$ with $N$ and $K$ fully invariant submodules of $M$ then $N\leq P$ or $K\leq P$. If $0$ is prime in $M$ we say $M$ is a prime module.
\end{dfn}

\begin{dfn}
Let $M$ be an $R$-module and $N\leq_{FI}M$. It is said $N$ is a semiprime submodule in $M$ if whenever $K_MK\leq P$ with $K$ a fully invariant submodules of $M$ then $K\leq P$. If $0$ is semiprime in $M$ we say $M$ is a semiprime module.
\end{dfn}

Given $K,X\in\sm$, the annihilator of $K$ in $X$ is defined as
\[Ann_X(K)=\bigcap\{Ker(f)|f\in\Hom(X,K)\]
This submodule has the propriety that is the greatest submodule of $X$ such that $Ann_M(K)_XK=0$. If we consider $K\leq M$ with $M$ projective in $\sm$ then can be defined the right annihilator of $K$ as
\[Ann^r_M(K)=\sum\{L\leq M|K_ML=0\}\]
In \cite[Remark 1.15]{mauacc} is proved that $Ann^r_M(K)$ is a fully invariant submodule of $M$ and is the greatest submodule of $M$ such that $K_MAnn^r_M(K)=0$. One advantage of working with semiprime modules is that $Ann_M(K)=Ann^r_M(K)$ for all $K\leq M$ (\cite[Proposition 1.16]{mauacc}).

Following \cite{PepeKrull}

\begin{dfn}
Let $M\in{R-Mod}$. We call a left annihilator in $M$ a submodule 
\[\mathcal{A}_X=\bigcap\{Ker(f)|f\in{X}\}\]
for some $X\subseteq{End_R(M)}$.
\end{dfn}

As we mentioned before, in \cite[Theorem 2.2]{mauacc} was proved that a semiprime module $M$ projective in $\sm$ and which satisfies ACC on left annihilators has finitely many minimal prime submodules; call these $P_1,...,P_n$. The minimal primes gives a decomposition of $\widehat{M}$ in the following way
\[\widehat{M}=\E(N_1)\oplus\cdots\oplus\E(N_n)\]
where $N_i=Ann_M(P_i)$. 

In the next section we present a decomposition of $\widehat{M}$ equivalent to the last one in the sense of Krull-Remak-Schmidt, using the $M$-injective hulls of $M/P_i$.

\section{A Decomposition of a Goldie Module}

\begin{lemma}\label{1}
Let $M$ be projective in $\sigma[M]$ and semiprime. Suppose $M$ satisfies ACC on left annihilators and $P_1,...,P_n$ are the minimal prime in $M$ submodules. If $N_i=Ann_M(P_i)$ for each $1\leq{i}\leq{n}$ then $N_i$ is a pseudocomplement of $\underset{j\neq{i}}{\bigoplus}{N_j}$. 
\end{lemma}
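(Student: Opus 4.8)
The plan is to verify the two requirements of being a pseudocomplement: that $N_i\cap\bigoplus_{j\neq i}N_j=0$, and that $N_i$ is maximal among submodules of $M$ meeting $\bigoplus_{j\neq i}N_j$ trivially. The first requirement is immediate from the decomposition $\widehat{M}=\E(N_1)\oplus\cdots\oplus\E(N_n)$ recalled above: since each $N_k\leq_{e}\E(N_k)$ and $M\leq_{e}\widehat{M}$, the sum $N_1+\cdots+N_n$ formed inside $M$ is direct and $\bigoplus_{k=1}^{n}N_k\leq_{e}M$; in particular $N_i\cap\bigoplus_{j\neq i}N_j=0$.

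For the maximality I would route the argument through the prime submodule $P_i$, using two observations. First, for $j\neq i$ one has $N_j\leq P_i$: the submodule $N_j=Ann_M(P_j)=Ann^r_M(P_j)$ is fully invariant and $(P_j)_M(N_j)=0\leq P_i$, so since $P_i$ is prime either $P_j\leq P_i$ or $N_j\leq P_i$, and the former is impossible because $P_i$ and $P_j$ are distinct minimal prime in $M$ submodules. Second, $N_i$ is the \emph{largest} submodule of $M$ that meets $P_i$ in $0$: if $Y\leq M$ with $Y\cap P_i=0$, then for every $f\in\Hom(M,Y)$, composing $f$ with the inclusion $Y\hookrightarrow M$ and using that $P_i$ is fully invariant gives $f(P_i)\leq P_i\cap Y=0$; hence $(P_i)_M(Y)=0$ and so $Y\leq Ann^r_M(P_i)=N_i$. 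Applying this with $Y=N_i\cap P_i$ (a fully invariant submodule), together with semiprimeness of $M$ and the monotonicity of the product $-_M-$, which give $(N_i\cap P_i)_M(N_i\cap P_i)\leq(P_i)_M(N_i)=0$, shows also $N_i\cap P_i=0$, so $N_i$ itself meets $P_i$ trivially.

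Putting these together: by the modular law, $\bigl(\bigoplus_{k}N_k\bigr)\cap P_i=\bigoplus_{j\neq i}N_j$ (using $\bigoplus_{j\neq i}N_j\leq P_i$ and $N_i\cap P_i=0$), and since $\bigoplus_{k}N_k\leq_{e}M$ this intersection is essential in $P_i$; thus $\bigoplus_{j\neq i}N_j\leq_{e}P_i$. Now suppose $N_i\leq L\leq M$ with $L\cap\bigoplus_{j\neq i}N_j=0$. Essentiality of $\bigoplus_{j\neq i}N_j$ in $P_i$ forces $L\cap P_i=0$, whence $L\leq N_i$ by the second observation, i.e.\ $L=N_i$. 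Hence $N_i$ is a pseudocomplement of $\bigoplus_{j\neq i}N_j$.

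I expect the point requiring the most care to be the transfer of essentiality — deducing $\bigoplus_{j\neq i}N_j\leq_{e}P_i$ from $\bigoplus_{k}N_k\leq_{e}M$ via the modular identity — together with the use of $Ann_M=Ann^r_M$ in the semiprime case, which is exactly what makes $N_i$ absorb every submodule disjoint from $P_i$. The remaining steps are routine applications of the monotonicity of $-_M-$, the definition of a prime submodule, and the modular law.
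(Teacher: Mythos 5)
Your proof is correct, but it follows a genuinely different route from the paper's. The paper argues entirely through annihilators: if $N_i\leq L$ and $L\cap\bigoplus_{j\neq i}N_j=0$, then $({N_j})_ML\leq N_j\cap L=0$ for each $j\neq i$, hence $L\leq Ann_M(N_j)=Ann_M(Ann_M(P_j))=P_j$, and therefore $L\leq\bigcap_{j\neq i}P_j=N_i$; the ingredients are the double-annihilator identity for minimal primes (\cite[Proposition 1.22]{maugoldie}) and the identity $Ann_M(P_i)=\bigcap_{j\neq i}P_j$ (\cite[Lemma 2.25]{mauacc}), with no appeal to essentiality or injective hulls. You instead extract from primeness of $P_i$ and semiprimeness of $M$ the facts $N_j\leq P_i$ for $j\neq i$ and $N_i\cap P_i=0$, combine them with $\bigoplus_kN_k\leq_eM$ and the modular law to get $\bigoplus_{j\neq i}N_j\leq_eP_i$, and then conclude $L\cap P_i=0$, so $L\leq Ann^r_M(P_i)=N_i$. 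What your route buys is that it avoids the double-annihilator machinery and proves in advance exactly the statements the paper establishes afterwards in Lemma \ref{02} ($N_j\leq P_i$, the trivial intersection of $N_i$ and $P_i$, and $\bigoplus_{j\neq i}N_j\leq_eP_i$). What it costs is the dependence on the internal decomposition $\widehat{M}=\E(N_1)\oplus\cdots\oplus\E(N_n)$, i.e.\ on $\bigoplus_kN_k\leq_eM$: within this paper that is Corollary \ref{01}, which is deduced \emph{from} Lemma \ref{1}, so your argument escapes circularity only because the decomposition is also available from the prior work recalled in the introduction (\cite[Theorem 2.20]{mauacc}), whereas the paper's proof is independent of it. Two minor points: the directness of $\sum_kN_k$ already follows from your own observations ($N_j\leq P_i$ for $j\neq i$ together with $N_i\cap P_i=0$), so only the essentiality truly needs the hull decomposition; and the phrase ``applying this with $Y=N_i\cap P_i$'' is a slip, since that observation presupposes $Y\cap P_i=0$, but the chain $(N_i\cap P_i)_M(N_i\cap P_i)\leq(P_i)_MN_i=0$ together with semiprimeness and full invariance of $N_i\cap P_i$ is already a complete argument for $N_i\cap P_i=0$, so nothing is lost.
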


\begin{proof}
It is enough to prove that $N_1$ is a p.c. of $\bigoplus_{i=2}^n{N_i}$. Let $L\leq{M}$ such that $L\cap{(\bigoplus_{i=2}^n{N_i})}=0$ and $N_1\leq{L}$. Since $\bigoplus_{i=2}^n{N_i}$ is a fully invariant submodule then $(\bigoplus_{i=2}^n{N_i})_ML\leq{(\bigoplus_{i=2}^n{N_i})\cap{L}}=0$. Since ${N_i}_M{L}\leq{N_i}$ then $\bigoplus_{i=2}^n{({N_i}_M{L})}=0$. Hence ${N_i}_M{L}=0$ for all $2\leq{i}\leq{n}$. So, by \cite[Theorem 2.2]{mauacc} and \cite[Proposition 1.22]{maugoldie}, $L\leq{Ann_M(N_i)}=Ann_M(Ann_M(P_i))=P_i$ for all $2\leq{i}\leq{n}$. Then $L\leq\bigcap_{i=2}^n{P_i}=Ann_M(P_1)=N_1$. Thus $L=N_1$.
\end{proof}

\begin{cor}\label{01}
Let $M$ be projective in $\sigma[M]$ and semiprime. Suppose $M$ satisfies ACC on left annihilators and $P_1,...,P_n$ are the minimal prime in $M$ submodules. If $N_i=Ann_M(P_i)$ for each $1\leq{i}\leq{n}$ then $\widehat{M}=\E(N_1)\oplus\cdots\E(N_n)$.
\end{cor}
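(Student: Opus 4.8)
The goal is to show $\widehat{M} = \E(N_1) \oplus \cdots \oplus \E(N_n)$ where $N_i = Ann_M(P_i)$. The strategy is to first establish that the sum $\sum_{i=1}^n N_i$ is both direct and essential in $M$; then, since $M$-injective hulls are preserved by essential extensions and direct sums, pass to $\widehat{M}$.

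First I would show the sum is direct. By Lemma \ref{1}, $N_1$ is a pseudocomplement of $\bigoplus_{i=2}^n N_i$; in particular $N_1 \cap \bigoplus_{i=2}^n N_i = 0$, and by symmetry (re-indexing) $N_k \cap \bigoplus_{i \neq k} N_i = 0$ for each $k$, which gives that $\sum_{i=1}^n N_i = \bigoplus_{i=1}^n N_i$ is a direct sum. (Alternatively, one can argue directly: $(N_i)_M(N_j) \leq N_i \cap N_j$, and using $N_i = Ann_M(P_i) = Ann^r_M(P_i)$ together with the fact that $\bigcap_i P_i = 0$ since $M$ is semiprime with minimal primes $P_1,\dots,P_n$, one shows any element in the intersection is annihilated by everything.) Next I would show $\bigoplus_{i=1}^n N_i \leq_e M$. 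Here the key input is that $M$ is semiprime, so $0$ is a semiprime submodule, hence $\bigcap_{i=1}^n P_i = 0$. Since $N_i = Ann_M(P_i)$, one checks that $Ann_M\!\left(\bigoplus_{i=1}^n N_i\right) = \bigcap_{i=1}^n Ann_M(N_i) = \bigcap_{i=1}^n P_i = 0$, using \cite[Proposition 1.22]{maugoldie} that $Ann_M(Ann_M(P_i)) = P_i$ for minimal primes. In a semiprime module projective in $\sm$, a fully invariant submodule with zero annihilator is essential (this follows since if $K \cap \bigoplus N_i = 0$ then $K_M(\bigoplus N_i) = 0$, so $K \leq Ann_M(\bigoplus N_i) = 0$; note $\bigoplus N_i$ is fully invariant as a sum of fully invariant submodules).

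Finally, I would assemble the decomposition. Since $N_i \leq \widehat{M}$ and $\widehat{M}$ is $M$-injective, each $N_i$ has an $M$-injective hull $\E(N_i)$ that can be taken inside $\widehat{M}$. Because $\bigoplus_{i=1}^n N_i \leq_e M \leq_e \widehat{M}$, the sum $\sum_{i=1}^n \E(N_i)$ is essential in $\widehat{M}$; and since a finite direct sum of $M$-injective modules is $M$-injective, $\sum_{i=1}^n \E(N_i)$ will be $M$-injective once I verify the sum is direct — which follows because $\E(N_i) \cap \sum_{j\neq i}\E(N_j)$ would be an $M$-injective (hence closed) submodule meeting $N_i \oplus \bigoplus_{j\neq i} N_j$ in its essential-intersection part, forcing it to be $0$ by directness of the $N_i$'s. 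An $M$-injective submodule of $\widehat{M}$ that is essential must equal $\widehat{M}$, so $\widehat{M} = \bigoplus_{i=1}^n \E(N_i) = \E(N_1) \oplus \cdots \oplus \E(N_n)$.

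The main obstacle I anticipate is the directness of $\bigoplus_{i=1}^n \E(N_i)$: passing from directness of the $N_i$ to directness of their injective hulls requires care, since injective hulls can overlap even when the original modules are disjoint. The cleanest route is to use that $N_i$ is a pseudocomplement (Lemma \ref{1}), hence closed, so $\E(N_i) \cap M$ is forced to be exactly a pseudocomplement-type submodule; combined with the essentiality $\bigoplus N_i \leq_e M$ this pins down $\E(N_i) \cap M = N_i$ (or at least that the $\E(N_i)$ remain independent). Getting this independence argument airtight — rather than the essentiality, which is routine once $\bigcap P_i = 0$ — is where the real work lies.
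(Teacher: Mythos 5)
Your proposal is correct and follows essentially the paper's own route: the paper simply invokes Lemma \ref{1} (a pseudocomplement together with the submodule it complements has essential direct sum) to get $\bigoplus_{i=1}^n N_i\leq_e M$ and then passes to $M$-injective hulls, which is exactly your plan. Two small touch-ups: in your parenthetical essentiality argument the product must be taken in the order $\left(\bigoplus_i N_i\right)_M K\leq \left(\bigoplus_i N_i\right)\cap K=0$ (full invariance of $\bigoplus_i N_i$ controls images of $\bigoplus_i N_i$, not of $K$), whence $K\leq Ann^r_M\left(\bigoplus_i N_i\right)=Ann_M\left(\bigoplus_i N_i\right)=0$ by semiprimeness; and the final ``overlap'' worry needs no extra work, since for finitely many summands the external direct sum $\E(N_1)\oplus\cdots\oplus\E(N_n)$ is $M$-injective and contains $\bigoplus_i N_i$ essentially, hence is an $M$-injective hull of it, and uniqueness of hulls gives $\widehat{M}=\E(N_1)\oplus\cdots\oplus\E(N_n)$ directly.
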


\begin{proof}
By Lemma \ref{01} $\bigoplus_{i=1}^nN_i\leq_eM$. Thus we have done.
\end{proof}

\begin{lemma}\label{02}
Let $M$ be projective in $\sigma[M]$ and semiprime. Suppose that $M$ satisfies ACC on left annihilators and $P_1,...,P_n$ are the minimal prime in $M$ submodules. If $N_i=Ann_M(P_i)$ for each $1\leq{i}\leq{n}$ then $P_i$ is a pseudocomplement of $N_i$ which contains $\underset{j\neq{i}}{\bigoplus}{N_j}$ for all $1\leq{i}\leq{n}$. Moreover $\underset{j\neq{i}}{\bigoplus}{N_j}\leq_{e}P_i$ for all $1\leq{i}\leq{n}$.
\end{lemma}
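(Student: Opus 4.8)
\emph{Proof proposal.} The plan is to prove the four assertions in turn: $P_i\cap N_i=0$; $\bigoplus_{j\neq i}N_j\leq P_i$; the maximality of $P_i$ among submodules meeting $N_i$ trivially; and finally $\bigoplus_{j\neq i}N_j\leq_eP_i$. The only ingredients I expect to need are the product $N_MK$ with its monotonicity and the description of annihilators (\cite[Proposition 1.3]{PepeGab}, \cite[Remark 1.15, Proposition 1.16]{mauacc}), the double-annihilator identity for minimal primes already invoked in Lemma \ref{1} (\cite[Theorem 2.2]{mauacc}, \cite[Proposition 1.22]{maugoldie}), and the decomposition $\widehat{M}=\E(N_1)\oplus\cdots\oplus\E(N_n)$ of Corollary \ref{01}.

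First I would record two remarks used throughout. Since $M$ is semiprime, $N_i=Ann_M(P_i)=Ann^r_M(P_i)$, so both $(N_i)_MP_i=0$ and $(P_i)_MN_i=0$; and for any fully invariant $K\leq M$ and any $L\leq M$ one has $K_ML\leq K\cap L$, because $K_ML$ is a sum of submodules $f(K)$ with $f\in\Hom(M,L)$, and each such $f(K)$ lies in $L$ and, since $K$ is fully invariant and $f$ extends to an endomorphism of $M$, also in $K$. Using monotonicity and the first remark, $(P_i\cap N_i)_M(P_i\cap N_i)\leq(N_i)_MP_i=0$, and semiprimeness of $M$ forces $P_i\cap N_i=0$. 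For the inclusion $\bigoplus_{j\neq i}N_j\leq P_i$: for $j\neq i$ we have $(N_j)_MP_j=0\leq P_i$ with $N_j,P_j$ fully invariant, so primeness of $P_i$ in $M$ gives $N_j\leq P_i$ or $P_j\leq P_i$; the second alternative contradicts the fact that $P_i$ and $P_j$ are distinct minimal prime in $M$ submodules, hence $N_j\leq P_i$, and the sum $\sum_{j\neq i}N_j$ is direct because the family $\{\E(N_k)\}$, hence $\{N_k\}$, is independent.

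For the pseudocomplement property, I would suppose $P_i\leq L\leq M$ with $L\cap N_i=0$; by the second remark $(N_i)_ML\leq N_i\cap L=0$, so $L\leq Ann^r_M(N_i)=Ann_M(N_i)=Ann_M(Ann_M(P_i))=P_i$, whence $L=P_i$, so $P_i$ is maximal with respect to meeting $N_i$ trivially. Finally, for essentiality, given $0\neq X\leq P_i$, essentiality of $\bigoplus_{k=1}^nN_k$ in $M$ (Corollary \ref{01}) yields $0\neq y\in X\cap\bigoplus_kN_k$; writing $y=\sum_kn_k$ with $n_k\in N_k$, the summands with $k\neq i$ lie in $P_i$ by the previous paragraph and $y\in X\leq P_i$, so $n_i=y-\sum_{k\neq i}n_k\in P_i\cap N_i=0$, giving $0\neq y\in X\cap\bigoplus_{k\neq i}N_k$; hence $\bigoplus_{j\neq i}N_j\leq_eP_i$. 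The argument is essentially bookkeeping; the one point demanding care is keeping track of which submodules are fully invariant, so that the prime, semiprime and double-annihilator facts may legitimately be applied, and I do not anticipate a genuine obstacle.
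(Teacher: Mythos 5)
Your proof is correct and follows essentially the same route as the paper: the pseudocomplement property via $({N_i})_ML\le N_i\cap L=0$ and the double annihilator $Ann_M(Ann_M(P_i))=P_i$, and the essentiality of $\bigoplus_{j\neq i}N_j$ in $P_i$ from $\bigoplus_{k}N_k\le_e M$ together with $P_i\cap N_i=0$. The only (harmless) variation is that you derive $N_j\le P_i$ for $j\neq i$ directly from primeness of $P_i$ and minimality, where the paper instead cites the identity $N_i=\bigcap_{j\neq i}P_j$; you also spell out $P_i\cap N_i=0$ and the element-wise essentiality step, which the paper leaves implicit.
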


\begin{proof}
Fix $1\leq{i}\leq{n}$. Let $L\leq{M}$ such that $P_i\leq{L}$ and $L\cap{N_i}=0$. Since $N_i$ is a fully invariant submodule of $M$ then ${N_i}_ML\leq{N_i\cap{L}}=0$. So $L\leq{Ann_M(N_i)}=Ann_M(Ann_M(P_i))=P_i$. Thus $L=P_i$. 

Now by \cite[Lemma 2.25]{mauacc} we have that $N_{i}=Ann_{M}\left( P_{i}\right) =\underset{i\neq j}{\cap }P_{j}$. Hence $N_{j}\subseteq P_{i}$ for all $j\neq i$. Then $\underset{j\neq{i}}{\bigoplus}{N_j}\leq{P_i}$ and by Lemma \ref{01}, $\underset{j\neq{i}}{\bigoplus}{N_j}\leq_{e}P_i$.
\end{proof}

\begin{lemma}\label{2}
Let $M$ be projective in $\sigma[M]$ and semiprime. Suppose that $M$ satisfies ACC on left annihilators and $P_{1},P_{2},...,P_{n}$ are the minimal prime in $M$ submodules. If $N_{i}=Ann_{M}( P_{i})$ for $1\leq i\leq n$, then $P_{i}+N_{i}\subseteq _{e}M$ for all $1\leq i\leq n$.
\end{lemma}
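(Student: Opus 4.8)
The plan is to show that $N_i$ together with its pseudocomplement $P_i$ generate an essential submodule of $M$, which is essentially the statement that $P_i$ is a pseudocomplement of $N_i$ (established in Lemma \ref{02}) combined with the elementary fact that the sum of a submodule and any of its pseudocomplements is essential. First I would fix $1\leq i\leq n$ and recall from Lemma \ref{02} that $P_i$ is a pseudocomplement of $N_i$ in $M$; in particular $P_i\cap N_i=0$, so the sum $P_i+N_i$ is actually a direct sum $P_i\oplus N_i$.

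Next I would argue that $P_i\oplus N_i\leq_e M$. The clean way is to invoke the standard characterization: if $C$ is a pseudocomplement (complement) of a submodule $A$ in $M$, then $A\oplus C\leq_e M$. To see this directly in our setting, suppose $0\neq L\leq M$ with $L\cap(P_i\oplus N_i)=0$. Then $L\cap P_i=0$, and since $P_i$ is a \emph{maximal} submodule disjoint from $N_i$, the submodule $P_i\oplus L$ must still meet $N_i$ nontrivially (otherwise $P_i\oplus L$ would be a larger submodule disjoint from $N_i$ properly containing $P_i$, contradicting maximality); but any nonzero element of $(P_i\oplus L)\cap N_i$ can be written $p+\ell$ with $p\in P_i$, $\ell\in L$, and lying in $N_i\subseteq P_i\oplus N_i$, forcing $\ell\in(P_i\oplus N_i)\cap L=0$ and hence the element lies in $P_i\cap N_i=0$, a contradiction. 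Therefore no such nonzero $L$ exists and $P_i+N_i=P_i\oplus N_i\leq_e M$.

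Alternatively, and perhaps more in the spirit of the preceding lemmas, one can avoid re-proving the pseudocomplement fact: by Lemma \ref{02} we have $\bigoplus_{j\neq i}N_j\leq_e P_i$, and by Lemma \ref{1} $N_i$ is a pseudocomplement of $\bigoplus_{j\neq i}N_j$, so $N_i\oplus\bigl(\bigoplus_{j\neq i}N_j\bigr)=\bigoplus_{j=1}^n N_j\leq_e M$ (this is exactly Corollary \ref{01}). Since essential submodules are transitive under inclusion and $\bigoplus_{j\neq i}N_j\leq_e P_i$ gives $N_i\oplus\bigl(\bigoplus_{j\neq i}N_j\bigr)\leq_e N_i+P_i$, we get $\bigoplus_{j=1}^n N_j\leq_e N_i+P_i\leq M$, and since the smaller submodule $\bigoplus_{j=1}^n N_j$ is already essential in $M$, it follows that $N_i+P_i\leq_e M$ as well.

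I do not anticipate a serious obstacle here: the result is a short corollary of Lemmas \ref{1} and \ref{02} together with routine facts about essential submodules and pseudocomplements (transitivity of essentiality, and $A\oplus C\leq_e M$ for a complement pair). The only point requiring a little care is making sure the ``$N_i$ disjoint from $P_i$'' and ``$\bigoplus_{j\neq i}N_j\leq_e P_i$'' inputs are correctly combined; the second displayed argument above handles this cleanly by reducing everything to Corollary \ref{01}.
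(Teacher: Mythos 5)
Your main argument is exactly the paper's proof: by Lemma \ref{02} $P_i$ is a pseudocomplement of $N_i$, and the sum of a submodule with a pseudocomplement is essential, so $N_i+P_i=N_i\oplus P_i\leq_e M$ (the paper states this in one line, while you also spell out the standard verification). Your alternative route via $\bigoplus_{j=1}^n N_j\leq_e M$ and $\bigoplus_{j\neq i}N_j\leq P_i$ is also correct, but it is only a minor variant of the same idea.
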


\begin{proof}
By Lemmas \ref{01} and \ref{02}, $N_i$ and $P_i$ are psudocomplements one each other for all $1\leq{i}\leq{n}$, thus $N_i+P_i=N_i\oplus{P_i}\leq_{e}{M}$ for all $1\leq{i}\leq{n}$.
\end{proof}

\begin{lemma}\label{21}
Let $M$ be projective in $\sigma[M]$ and semiprime. Suppose that $M$ satisfies ACC on left annihilators and $P_{1},P_{2},...,P_{n}$ are the minimal prime in $M$ submodules. If $N_{i}=Ann_{M}\left( P_{i}\right) $ for $1\leq i\leq n$, then 
\[P_i=M\cap\bigoplus_{j\neq{i}}{\E(N_j)}\]
for all $1\leq{i}\leq{n}$.
\end{lemma}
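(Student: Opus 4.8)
The plan is to show the two inclusions $P_i \subseteq M\cap\bigoplus_{j\neq i}\E(N_j)$ and $M\cap\bigoplus_{j\neq i}\E(N_j)\subseteq P_i$ separately, using the decomposition $\widehat{M}=\E(N_1)\oplus\cdots\oplus\E(N_n)$ from Corollary \ref{01} together with the essential-extension relations already established in Lemmas \ref{02} and \ref{2}. For the first inclusion, by Lemma \ref{02} we have $\bigoplus_{j\neq i}N_j\leq_e P_i$ and $\bigoplus_{j\neq i}N_j\subseteq\bigoplus_{j\neq i}\E(N_j)$; the key point will be that $P_i$ is contained in the $M$-injective hull of $\bigoplus_{j\neq i}N_j$ taken inside $\widehat{M}$, and since $\E\bigl(\bigoplus_{j\neq i}N_j\bigr)=\bigoplus_{j\neq i}\E(N_j)$ (finite direct sums of $M$-injectives are $M$-injective, and each $\E(N_j)$ is the hull of $N_j$ inside it), we get $P_i\subseteq\bigoplus_{j\neq i}\E(N_j)$; intersecting with $M$ and using $P_i\leq M$ gives $P_i\subseteq M\cap\bigoplus_{j\neq i}\E(N_j)$.

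For the reverse inclusion, set $L=M\cap\bigoplus_{j\neq i}\E(N_j)$, which is a submodule of $M$ containing $P_i$ by the previous paragraph. I would like to conclude $L=P_i$ by invoking the pseudocomplement property from Lemma \ref{02}: $P_i$ is a pseudocomplement of $N_i$. So it suffices to check $L\cap N_i=0$. But $N_i\subseteq\E(N_i)$, and $\E(N_i)\cap\bigoplus_{j\neq i}\E(N_j)=0$ because the sum $\widehat{M}=\bigoplus_{k=1}^n\E(N_k)$ is direct; hence $L\cap N_i\subseteq\E(N_i)\cap\bigoplus_{j\neq i}\E(N_j)=0$. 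Since $P_i$ is a pseudocomplement of $N_i$ and $P_i\subseteq L$ with $L\cap N_i=0$, maximality forces $L=P_i$, which is the desired equality.

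The main obstacle, and the step requiring the most care, is the identification $\E\bigl(\bigoplus_{j\neq i}N_j\bigr)=\bigoplus_{j\neq i}\E(N_j)$ as the $M$-injective hull computed inside $\widehat{M}$, i.e. verifying that $\bigoplus_{j\neq i}N_j$ is essential in $\bigoplus_{j\neq i}\E(N_j)$ and that the latter is $M$-injective. Essentiality of a finite direct sum follows from essentiality of each summand ($N_j\leq_e\E(N_j)$ by definition), and $M$-injectivity of a finite direct sum of $M$-injective modules is standard in $\sm$; these let one legitimately regard $\bigoplus_{j\neq i}\E(N_j)$ as $\E\bigl(\bigoplus_{j\neq i}N_j\bigr)$. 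Once this is in place, the containment $P_i\subseteq\E\bigl(\bigoplus_{j\neq i}N_j\bigr)$ follows because $\bigoplus_{j\neq i}N_j\leq_e P_i$ (Lemma \ref{02}) forces $P_i$ to embed, over the identity on $\bigoplus_{j\neq i}N_j$, into any $M$-injective hull of $\bigoplus_{j\neq i}N_j$ sitting inside the ambient $M$-injective module $\widehat{M}$. Everything else is a direct-sum bookkeeping argument plus one appeal to the pseudocomplement maximality already proved.
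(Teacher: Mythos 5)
Your outline reproduces the paper's own proof almost step for step: the paper likewise combines the decomposition $\widehat{M}=\E(N_1)\oplus\cdots\oplus\E(N_n)$ of Corollary \ref{01}, the identification of $\bigoplus_{j\neq i}\E(N_j)$ with an $M$-injective hull of $P_i$ (via $\bigoplus_{j\neq i}N_j\leq_e P_i$ from Lemma \ref{02}), and the pseudocomplement maximality of $P_i$ relative to $N_i$ to conclude $P_i=M\cap\bigoplus_{j\neq i}\E(N_j)$; your second paragraph is exactly the paper's closing step.

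However, the justification you offer for the step you yourself flag as delicate, namely the containment $P_i\subseteq\bigoplus_{j\neq i}\E(N_j)$, is not valid as stated. Essentiality $\bigoplus_{j\neq i}N_j\leq_e P_i$ does give an embedding of $P_i$ into $\bigoplus_{j\neq i}\E(N_j)$ extending the identity on $\bigoplus_{j\neq i}N_j$, but an embedding over the identity is not a set-theoretic containment inside $\widehat{M}$: in general an essential extension of $X$ living in an ambient injective module need not lie inside a given injective hull of $X$ in that ambient module. For instance, over $\mathbb{Z}$ take $Q=\mathbb{Z}_{p^\infty}\oplus\mathbb{Z}_{p^\infty}$, $X$ the subgroup generated by $(1/p,0)$, $E=\mathbb{Z}_{p^\infty}\oplus 0$ and $B$ the subgroup generated by $(1/p^2,1/p)$; then $X\leq_e B$ and $B$ embeds in $E$ over the identity on $X$, yet $B\not\subseteq E$. (To be fair, the paper is just as terse here: it asserts $\E(P_i)=\bigoplus_{j\neq i}\E(N_j)$ ``by Lemma \ref{02}''.) A correct argument must use the standing hypotheses rather than essentiality alone. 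For example, let $\pi_i:\widehat{M}\to\E(N_i)$ be the projection along $\bigoplus_{j\neq i}\E(N_j)$; the kernel of $\pi_i|_{P_i}$ contains $\bigoplus_{j\neq i}N_j$, which is essential in $P_i$ by Lemma \ref{02}, so $\pi_i(P_i)$ is $M$-singular, whereas under the hypotheses (semiprime plus ACC on left annihilators) $\widehat{M}$ is non $M$-singular, since by \cite[Theorem 2.20]{mauacc} and \cite[Corollary 2.12]{mauacc} it is a finite direct sum of non $M$-singular indecomposable injectives; hence $\pi_i(P_i)=0$, that is $P_i\subseteq\bigoplus_{j\neq i}\E(N_j)$. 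With that substitution your proof is complete and coincides with the paper's.
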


\begin{proof}
By Corollary \ref{01}, $\widehat{M}=\E(N_1)\oplus...\oplus{\E(N_n)}$. By Lemma \ref{2}, $\widehat{M}=\E(P_i)\oplus{\E(N_i)}$ for each $1\leq{i}\leq{n}$, moreover $\E(P_i)=\bigoplus_{j\neq{i}}{\E(N_j)}$ by Lemma \ref{02}. 

We have that $N_i\cap(M\cap{\E(P_i)})=N_i\cap{\E(P_i)}=0$, since $P_i$ is p.c. of $N_i$ then $P_i=M\cap{\E(P_i)}$. Thus $P_i=M\cap\underset{j\neq{i}}{\bigoplus}{\E(N_j)}$ for all $1\leq{i}\leq{n}$.
\end{proof}

\begin{prop}\label{3}
Let $M$ be projective in $\sigma[M]$ and semiprime. Suppose that $M$ satisfies ACC on left annihilators and $P_{1},P_{2},...,P_{n}$ are the minimal prime in $M$ submodules. If $N_{i}=Ann_{M}\left( P_{i}\right) $ for $1\leq i\leq n$, then the
morphism 
\[\Psi :M\rightarrow M/P_{1}\oplus M/P_{2}\oplus ...\oplus M/P_{n}\]
given by $\Psi \left( m\right) =\left( m+P_{1},m+P_{2,}....,m+P_{n}\right) $ is monomorphism and $Im\Psi \subseteq_e\bigoplus\limits_{i=1}^{n}M/P_{i}$.
\end{prop}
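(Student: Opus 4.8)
The plan is to prove the two assertions separately, both leaning on the structural facts already established for the minimal primes $P_1,\dots,P_n$ and their annihilators $N_i$.

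First I would show $\Psi$ is a monomorphism. The kernel of $\Psi$ is $\bigcap_{i=1}^n P_i$, so it suffices to see that $\bigcap_{i=1}^n P_i = 0$. Since $M$ is semiprime and the $P_i$ are its minimal prime in $M$ submodules, this is the standard fact that in a semiprime module the intersection of the minimal primes is zero; alternatively, by Lemma~\ref{02} we have $N_i = \bigcap_{j\neq i}P_j$, hence $\bigcap_{i=1}^n P_i = \bigcap_{i=1}^n\big(\bigcap_{j\neq i}P_j \cap P_i\big) = N_i \cap P_i$ for any fixed $i$, and by Lemma~\ref{2} (or Lemma~\ref{02}) $N_i$ and $P_i$ are pseudocomplements of one another, so $N_i\cap P_i = 0$. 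Thus $\ker\Psi = 0$.

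Next I would show $\mathrm{Im}\,\Psi$ is essential in $\bigoplus_{i=1}^n M/P_i$. The natural framework is the decomposition $\widehat{M} = \E(N_1)\oplus\cdots\oplus\E(N_n)$ from Corollary~\ref{01}, together with the identification, from Lemma~\ref{2} and Lemma~\ref{02}, that $\widehat{M} = \E(P_i)\oplus\E(N_i)$ with $\E(P_i) = \bigoplus_{j\neq i}\E(N_j)$, and the description $P_i = M\cap\bigoplus_{j\neq i}\E(N_j)$ from Lemma~\ref{21}. From $P_i = M\cap \E(P_i)$ we get a monomorphism $M/P_i \hookrightarrow \widehat{M}/\E(P_i)\cong \E(N_i)$, and since $N_i\leq_e M$ implies $N_i$ meets $P_i$-trivially and maps essentially into $M/P_i$ — more precisely $M/P_i$ is an essential extension of $(N_i\oplus P_i)/P_i \cong N_i$ (by Lemma~\ref{2}, $N_i\oplus P_i\leq_e M$) and $N_i$ is non-$M$-singular with $\widehat{N_i}=\E(N_i)$ — the composite $M/P_i \hookrightarrow \E(N_i)$ realizes $\E(N_i)$ as the $M$-injective hull of $M/P_i$. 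Assembling these, $\Psi$ composed with the inclusion $\bigoplus M/P_i \hookrightarrow \bigoplus \E(N_i) = \widehat{M}$ is, up to the canonical isomorphism, the inclusion $M\hookrightarrow\widehat{M}$: indeed under $\widehat{M}\to\widehat{M}/\E(P_i)\cong\E(N_i)$ the element $m$ goes to the class of $m+P_i$, which is exactly the $i$-th coordinate of $\Psi(m)$. Since $M\leq_e\widehat{M}$ and $\widehat{M} = \bigoplus\E(N_i)$ with $\bigoplus M/P_i$ sitting between $\mathrm{Im}\,\Psi$ and $\widehat{M}$, essentiality of $M$ in $\widehat{M}$ forces $\mathrm{Im}\,\Psi\leq_e\bigoplus M/P_i$ (any nonzero submodule of $\bigoplus M/P_i$ is nonzero in $\widehat{M}$, hence meets $M=\mathrm{Im}\,\Psi$ nontrivially).

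The main obstacle I anticipate is the bookkeeping in the second part: carefully checking that the composite $M \xrightarrow{\Psi}\bigoplus M/P_i \hookrightarrow \bigoplus \E(N_i)$ coincides with the essential inclusion $M\hookrightarrow\widehat{M}$ under the Krull-Remak-Schmidt identification of the two decompositions of $\widehat{M}$, i.e. that the projection $\widehat M \to \E(N_i)$ along $\E(P_i)=\bigoplus_{j\ne i}\E(N_j)$ restricted to $M$ factors through $M/P_i$ and induces the canonical map. Once the diagram is set up correctly, the essentiality is immediate from $M\leq_e\widehat{M}$; the rest is verifying that each $M/P_i\hookrightarrow\E(N_i)$ is an essential extension, which follows from $N_i\oplus P_i\leq_e M$ (Lemma~\ref{2}) giving $N_i\cong (N_i\oplus P_i)/P_i\leq_e M/P_i$ and from $\E(N_i)$ being the $M$-injective hull of $N_i$.
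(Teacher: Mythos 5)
Your proof is correct, but the essentiality half follows a genuinely different route from the paper's. For injectivity both arguments reduce to $\bigcap_{i=1}^{n}P_i=0$: the paper simply cites \cite[Corollary 1.15]{maugoldie}, while your alternative derivation from $N_i=\bigcap_{j\neq i}P_j$ and $P_i\cap N_i=0$ (Lemma \ref{02}) is equally valid. For essentiality the paper argues with elements: from $P_i+N_i\leq_e M$ (Lemma \ref{2}) it gets $\bigoplus_i\bigl(P_i+\bigcap_{j\neq i}P_j\bigr)\leq_e M^n$, chooses $r\in R$ with $0\neq r(m_1,\dots,m_n)$ in that submodule, and writes down $m=y_1+\cdots+y_n$ with $\Psi(m)=r(m_1+P_1,\dots,m_n+P_n)$. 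You instead invoke the hull decomposition $\widehat{M}=\E(N_1)\oplus\cdots\oplus\E(N_n)$ (Corollary \ref{01}) together with Lemma \ref{21}: since $P_i=M\cap\bigoplus_{j\neq i}\E(N_j)$, the projections $\pi_i$ induce injections $\overline{\pi_i}:M/P_i\to\E(N_i)$ whose direct sum $\iota$ satisfies $\iota\circ\Psi=\bigl(M\hookrightarrow\widehat{M}\bigr)$, and then $\mathrm{Im}\,\Psi\leq_e\bigoplus_i M/P_i$ drops out of $M\leq_e\widehat{M}$ and the injectivity of $\iota$. Your route costs more machinery (it needs Corollary \ref{01} and Lemma \ref{21}, hence the injective hull), but it buys a cleaner finish: essentiality is inherited directly from $M\leq_e\widehat{M}$ with no element bookkeeping, and in particular you never have to check that the chosen scalar multiple remains nonzero modulo the $P_i$, a point the element-wise argument has to be careful about. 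Two inessential remarks in your write-up deserve a caveat: the aside that $\E(N_i)$ is the $M$-injective hull of $M/P_i$ is really Proposition \ref{51}, proved later in the paper and not needed here; and the claim $N_i\cong(N_i\oplus P_i)/P_i\leq_e M/P_i$ requires that $P_i$ is a pseudocomplement of $N_i$ (Lemma \ref{02}), not merely that $N_i\oplus P_i\leq_e M$. Since neither remark is used in your actual argument, nothing breaks.
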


\begin{proof}
By \cite[Corollary 1.15]{maugoldie} we have that, $\overset{n}{\underset{i=1}{\cap }}P_{i}=0$. Thus $\Psi $ is monomorphism. Now let $0\neq \left(m_{1}+P_{1},m_{2}+P_{2,}....,m_{n}+P_{n}\right) \in\bigoplus\limits_{i=1}^{n}M/P_{i}$. \ By \ref{2} $\bigoplus\limits_{i=1}^{n}\left( P_{i}+\underset{i\neq j}{\cap }P_{j}\right) \subseteq _{e}M^{n}$. So there exists $r\in R$ such that 
\[0\neq{r}\left( m_{1},m_{2}{}_{,}....,m_{n}\right) \in\bigoplus\limits_{i=1}^{n}\left( P_{i}+\underset{i\neq j}{\cap }P_{j}\right)\]
Hence $rm_{i}\in P_{i}+\underset{i\neq j}{\cap }P_{j}$ for $1\leq i\leq n$. Thus there exists $x_{i}\in P_{i}$ and $y_{i}\in \underset{i\neq j}{\cap }P_{j}$ such that $rm_{i}=x_{i}+y_{i}$ for every $1\leq i\leq n$. We claim that $r\left( m_{1}+P_{1},m_{2}+P_{2,}....,m_{n}+P_{n}\right) \in Im\Psi $. In fact let $m=y_{1}+y_{2}+...+y_{n}$, then 
\[\Psi \left( m\right)=\left( y_{1}+y_{2}+...+y_{n}+P_{1},....,y_{1}+y_{2}+...+y_{n}+P_{n}\right)\]
\[=\left( y_{1}+P_{1},....,y_{n}+P_{n}\right) =\left(rm_{1}-x_{1}+P_{1},....,rm_{n}-x_{n}+P_{n}\right)\]
\[=r\left( m_{1}+P_{1},....,m_{n}+P_{n}\right)\].
\end{proof}

\begin{cor}\label{4}
Let $M$ be projective in $\sigma[M]$ and semiprime. Suppose that $M$ satisfies ACC on left annihilators and $P_{1},P_{2},...,P_{n}$ are the minimal prime in $M$ submodules, then
\begin{enumerate}
	\item $\widehat{M}\cong{\E(M/P_{1})\oplus{\E(M/P_{2})}\oplus...\oplus{\E(M/P_{n})}}$.
	\item If in addition $M$ is a Goldie module, $M/P_{i}$ has finite uniform dimension for all $0\leq i\leq n$.
\end{enumerate}
\end{cor}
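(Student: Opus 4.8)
The plan is to read off part (1) directly from Proposition \ref{3} together with Corollary \ref{01}, and then deduce part (2) from the structure of the $M/P_i$ as prime Goldie modules. For part (1), I would argue as follows. By Proposition \ref{3} the map $\Psi$ embeds $M$ into $\bigoplus_{i=1}^n M/P_i$ with essential image; since an $M$-injective hull is, up to isomorphism, determined by any module in which the given one sits essentially, it suffices to check that $\bigoplus_{i=1}^n M/P_i$ has $M$-injective hull $\bigoplus_{i=1}^n \E(M/P_i)$. A finite direct sum of $M$-injective modules is $M$-injective (this is standard in $\sm$, see Wisbauer), and each $\E(M/P_i)$ contains $M/P_i$ essentially, so $\bigoplus_{i=1}^n (M/P_i) \leq_e \bigoplus_{i=1}^n \E(M/P_i)$ and the latter is $M$-injective; hence it is $\widehat{M}$. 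Composing the two essential extensions $M \leq_e \Psi(M) \leq \bigoplus M/P_i \leq_e \bigoplus \E(M/P_i)$ gives $\widehat{M} \cong \bigoplus_{i=1}^n \E(M/P_i)$. Alternatively one can match this with the decomposition $\widehat{M} = \E(N_1)\oplus\cdots\oplus\E(N_n)$ of Corollary \ref{01} and Lemma \ref{21}, since $N_i \cong M/P_i$ is false in general but $\E(N_i)\cong\E(M/P_i)$ does hold by a Krull--Remak--Schmidt comparison once one knows both decompositions refine to the same indecomposables; the direct route via $\Psi$ is cleaner, so that is the one I would write.

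For part (2), assume in addition that $M$ is a Goldie module, so $M$ has finite uniform dimension. By \cite[Proposition 2.25]{maugoldie} each $M/P_i$ is a prime Goldie module; in particular $M/P_i$ has finite uniform dimension. That already gives the statement, so strictly speaking part (2) is immediate from the cited proposition. If instead one wants to see it intrinsically from part (1): $\widehat{M}$ has finite uniform dimension equal to that of $M$ (an essential extension preserves uniform dimension), and $\widehat{M} \cong \bigoplus_{i=1}^n \E(M/P_i)$, so each summand $\E(M/P_i)$ has finite uniform dimension; since $M/P_i \leq_e \E(M/P_i)$, the module $M/P_i$ has the same finite uniform dimension. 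I would present this second argument as the proof, invoking only the additivity of uniform dimension over finite direct sums and its invariance under essential extension, so that part (2) is genuinely a corollary of part (1) rather than a restatement of \cite{maugoldie}.

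The only point that needs a little care — and the place I would expect a referee to push — is the claim that a finite direct sum of $M$-injective modules is $M$-injective, and more precisely that "$N \leq_e L$ with $L$ being $M$-injective" forces $L \cong \widehat{N}$. Both are standard facts about the category $\sm$ (the first is the analogue of the fact that finite direct sums of injectives are injective, valid because $M$-injectivity in $\sm$ is characterized by a Baer-type criterion relative to $M$; the second is the uniqueness of injective hulls up to isomorphism), so I would cite \cite{wisbauerfoundations} for both rather than reprove them. No genuine obstacle remains beyond getting those citations right.
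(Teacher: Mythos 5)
Your proof is correct and follows essentially the same route as the paper: part (1) is read off from the essential monomorphism $\Psi$ of Proposition \ref{3} together with the standard facts that a finite direct sum of $M$-injective modules is $M$-injective and that essential extensions determine the $M$-injective hull in $\sigma[M]$, and part (2) is exactly the paper's intrinsic argument via invariance of finite uniform dimension under essential extensions and finite direct sums. Your decision not to deduce (2) from \cite[Proposition 2.25]{maugoldie} is also the right one, since that proposition assumes each $M/P_i$ has finite uniform dimension --- precisely the hypothesis that Corollary \ref{5} later removes by appealing to this corollary.
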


\begin{proof}
\textit{1}. By Proposition \ref{3} $\Psi $ is a monomorphism and $Im\Psi \subseteq _{e}\bigoplus\limits_{i=1}^{n}M/P_{i}$. Thus
we have the result.

\textit{2}. Since $M$ is Goldie module, then $M$ has finite uniform dimension By (\textit{1})
we have that $M/P_{i}$ has finite uniform dimension for all $0\leq i\leq n$.
\end{proof}

The next corollary says that the hypothesis every quotient $M/P_i$ has finite uniform dimension of \cite[Proposition 2.25]{maugoldie} can be removed.

\begin{cor}\label{5}
Let $M$ be projective in $\sigma[M]$ and semiprime. Suppose that $M $ has a finitely many minimal prime submodules $P_{1},P_{2},...,P_{n}$. Then $M$ is a Goldie module if and only if each $M/P_{i}$ is a Goldie module.
\end{cor}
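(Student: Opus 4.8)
For the ``only if'' direction I would argue as follows. If $M$ is a Goldie module, then $M$ has finite uniform dimension and satisfies ACC on left annihilators, so Corollary \ref{4}(\textit{2}) applies and each $M/P_i$ has finite uniform dimension. This is exactly the extra hypothesis required in \cite[Proposition 2.25]{maugoldie}, which then yields that every $M/P_i$ is a prime Goldie module, in particular a Goldie module. So this implication costs essentially nothing beyond what precedes.

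For the ``if'' direction, assume each $M/P_i$ is a Goldie module. Since $M$ is semiprime, $\bigcap_{i=1}^{n}P_i=0$ (the zero submodule of a semiprime module is an intersection of prime submodules, hence of the finitely many minimal ones; see \cite[Corollary 1.15]{maugoldie}, or \cite{raggisemiprime}). Hence $\Psi\colon M\to\bigoplus_{i=1}^{n}M/P_i$, $\Psi(m)=(m+P_1,\dots,m+P_n)$, is a monomorphism, exactly as in Proposition \ref{3}, where only $\bigcap_iP_i=0$ is used for injectivity. Since each $M/P_i$ has finite uniform dimension, so does $\bigoplus_{i=1}^{n}M/P_i$, and therefore so does its submodule $\Psi(M)\cong M$. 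Thus the remaining task is to show that $M$ satisfies ACC on left annihilators.

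For this last point, write $S=End_R(M)$, $S_i=End_R(M/P_i)$, and let $\pi_i\colon M\to M/P_i$ be the canonical projection. Since $P_i$ is fully invariant in $M$, every $f\in S$ induces $\bar f_i\in S_i$ with $\bar f_i\pi_i=\pi_i f$. For $X\subseteq S$ put $\mathcal{B}^i_X=\bigcap_{f\in X}\ker\bar f_i$, a left annihilator in $M/P_i$; using $\bigcap_iP_i=0$ one checks the identity
\[\mathcal{A}_X=\bigcap_{i=1}^{n}\pi_i^{-1}(\mathcal{B}^i_X).\]
Given an ascending chain $\mathcal{A}_{X^{(1)}}\subseteq\mathcal{A}_{X^{(2)}}\subseteq\cdots$ of left annihilators of $M$, I would replace each $X^{(k)}$ by $\{f\in S:\ker f\supseteq\mathcal{A}_{X^{(k)}}\}$; this leaves $\mathcal{A}_{X^{(k)}}$ unchanged and makes the sets $X^{(k)}$ descending, so for each $i$ the chain $\mathcal{B}^i_{X^{(1)}}\subseteq\mathcal{B}^i_{X^{(2)}}\subseteq\cdots$ is ascending in $M/P_i$. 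Each of these $n$ chains stabilizes because $M/P_i$ has ACC on left annihilators, and beyond their common stabilization index the displayed identity forces $\mathcal{A}_{X^{(k)}}$ to stabilize. Hence $M$ satisfies ACC on left annihilators and is a Goldie module.

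I expect the main obstacle to be precisely this last step: recognizing that the left annihilators of $M$ are governed by those of the quotients $M/P_i$ (equivalently, that $S$ embeds into $\prod_iS_i$ via $f\mapsto(\bar f_1,\dots,\bar f_n)$, injectivity again coming from $\bigcap_iP_i=0$), and coping with the fact that $\mathcal{B}^i_X$ depends on the generating set $X$ rather than on $\mathcal{A}_X$ alone — which is exactly what forces the passage to the largest $X$ annihilating a given term of the chain. A smaller preliminary point worth care is that $\bigcap_{i=1}^{n}P_i=0$ must be deduced from ``semiprime plus finitely many minimal primes'' alone, without using the ascending chain condition, since in this direction the chain condition on $M$ is the conclusion rather than a hypothesis.
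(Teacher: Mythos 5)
Your proof is correct, but in the ``if'' direction it takes a genuinely different route from the paper. The paper's entire proof is a two-line reduction to \cite[Proposition 2.25]{maugoldie}: its only new content is exactly your first paragraph, namely that Corollary \ref{4}.\textit{2} supplies the missing finite-uniform-dimension hypothesis on each $M/P_i$ when $M$ is Goldie, while in the converse direction that hypothesis is automatic because each $M/P_i$ is itself Goldie, so Proposition 2.25 applies at once. You instead reprove that converse from scratch: the embedding $\Psi$ gives $M$ finite uniform dimension, and your transfer of ACC on left annihilators is sound --- since each $P_i$ is fully invariant, every $f\in End_R(M)$ induces $\bar f_i\in End_R(M/P_i)$, the identity $\mathcal{A}_X=\bigcap_{i=1}^n\pi_i^{-1}(\mathcal{B}^i_X)$ holds precisely because $\bigcap_iP_i=0$, and after replacing each $X^{(k)}$ by the largest set annihilating $\mathcal{A}_{X^{(k)}}$ the sets become descending, so each $\mathcal{B}^i_{X^{(k)}}$ is an ascending chain of left annihilators of the Goldie module $M/P_i$, and simultaneous stabilization of these $n$ chains forces stabilization of $\mathcal{A}_{X^{(k)}}$. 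What your route buys is independence from the internal details of \cite[Proposition 2.25]{maugoldie}, making this half self-contained; what it costs is length and the explicit burden, which you correctly flag, of establishing $\bigcap_{i=1}^nP_i=0$ from semiprimeness and the finiteness of the set of minimal primes alone --- the paper invokes the same fact via \cite[Corollary 1.15]{maugoldie} in Proposition \ref{3}, but there under an ACC hypothesis, so your argument rests on that cited result (equivalently, on $0$ being an intersection of primes together with every prime containing a minimal one) not secretly requiring a chain condition; with that checked, both directions of your argument stand.
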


\begin{proof}
Note that by Corollary \ref{4}.\textit{2} if $M$ is a Goldie module then each quotient $M/P_i$ has finite uniform dimension, so it follows by \cite[Proposition 2.25]{maugoldie}
\end{proof}


\begin{lemma}\label{2.1}
Let $M$ be projective in $\sm$. If $M$ is a semiprime Goldie module then $M^n$ for all $n>0$.
\end{lemma}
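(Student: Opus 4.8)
The plan is to prove the (evidently mistyped) statement ``$M^{n}$ is a semiprime Goldie module for all $n>0$'' by verifying, for $M^{n}$, each of the ingredients in the definition: projectivity in $\sm$, semiprimeness, finite uniform dimension, and the ACC on left annihilators. Two of these are immediate. Since $M$ and $M^{n}$ subgenerate each other, $\sigma[M^{n}]=\sm$, and a finite direct sum of a module projective in $\sm$ is again projective in $\sm$, so $M^{n}$ is projective in $\sigma[M^{n}]$. Also uniform dimension is additive over finite direct sums, so $\mathrm{u.dim}(M^{n})=n\cdot\mathrm{u.dim}(M)<\infty$. Thus it remains to show that $M^{n}$ is semiprime and satisfies the ACC on left annihilators.

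For semiprimeness, put $S=End_R(M)$, so $End_R(M^{n})=\mathbb{M}_{n}(S)$. Using the matrix units $\iota_j\pi_i$ one checks that a submodule $L\le M^{n}$ is fully invariant if and only if $L=K^{n}$ for the (necessarily fully invariant) submodule $K=\pi_1(L)$ of $M$: invariance under the $\iota_j\pi_i$ forces every coordinate of every element of $L$ into $K$, and conversely forces $\iota_i(K)\subseteq L$ for all $i$. Next I would establish the product formula, for $K\le_{FI}M$,
\[(K^{n})_{M^{n}}(K^{n})=(K_MK)^{n}.\]
Here ``$\subseteq$'' holds because for $g=(g_{ij})\in\Hom(M^{n},K^{n})$ each coordinate of $g(K^{n})$ is a finite sum of sets $g_{ij}(K)$ with $g_{ij}\in\Hom(M,K)$, hence lies in $K_MK$; and ``$\supseteq$'' holds because for $f\in\Hom(M,K)$ the map $\iota_i f\pi_1\in\Hom(M^{n},K^{n})$ carries $K^{n}$ onto a set containing $\iota_i(f(K))$, and these generate $(K_MK)^{n}$ (this is in the spirit of Lemma~\ref{1}). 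Now if $L\le_{FI}M^{n}$ satisfies $L_{M^{n}}L=0$, writing $L=K^{n}$ gives $(K_MK)^{n}=0$, so $K_MK=0$, so $K=0$ since $M$ is semiprime, whence $L=0$; thus $M^{n}$ is semiprime.

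The delicate point is the ACC on left annihilators of $M^{n}$. A left annihilator in $M^{n}$ is $\bigcap_{f\in X}\ker f$ for some $X\subseteq\mathbb{M}_{n}(S)$, and enlarging $X$ to the left ideal it generates does not change the intersection; so the poset of left annihilators in $M^{n}$ is anti-isomorphic to the poset of left ideals of $\mathbb{M}_{n}(S)$ that occur as left annihilators of subsets of $M^{n}$, and similarly for $M$ over $S$. The plan is to transfer the chain condition across the matrix ring: such a left ideal of $\mathbb{M}_{n}(S)$ is determined by its set of rows, an $S$-submodule $U\le S^{n}$, and the corresponding left annihilator in $M^{n}$ depends only on $U$; then, via the standard matrix-ring (Morita-type) correspondence between left ideals of $\mathbb{M}_{n}(S)$ and $S$-submodules of $S^{n}$, one argues that this class of ideals has the descending chain condition precisely when the analogous class for $S$ does, i.e. precisely when $M$ has the ACC on left annihilators. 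With this, $M^{n}$ is semiprime, of finite uniform dimension, and satisfies the ACC on left annihilators, hence is a semiprime Goldie module.

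The main obstacle is exactly this last transfer. A naive coordinatewise argument fails, because a left annihilator in $M^{n}$ need not be a direct sum of left annihilators in the coordinates — the diagonal $\{(v,v):v\in M\}\le M^{2}$ is already such an example — so the matrix/row-space bookkeeping genuinely has to be carried through. If instead one has available from \cite{maugoldie} the characterization that, for $M$ semiprime and projective in $\sm$ of finite uniform dimension, being Goldie is equivalent to $End(\widehat{M})$ being semisimple artinian, then the ACC for $M^{n}$ is immediate: $\widehat{M^{n}}=(\widehat{M})^{n}$ has $End((\widehat{M})^{n})=\mathbb{M}_{n}(End(\widehat{M}))$, again semisimple artinian.
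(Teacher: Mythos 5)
Your verifications of projectivity ($\sigma[M^n]=\sigma[M]$), of finite uniform dimension, and of semiprimeness are correct; in particular the identification of the fully invariant submodules of $M^n$ as the $K^n$ with $K\leq_{FI}M$, together with the formula $(K^n)_{M^n}(K^n)=(K_MK)^n$, is a sound argument. The genuine gap is exactly where you flag it: the ACC on left annihilators of $M^n$ is never proved, only planned, and the plan as stated cannot succeed, because it uses nothing but the matrix-ring/row-space correspondence and never invokes semiprimeness. At that level of generality the transfer is false: by J.~W.~Kerr's example (J.\ Algebra 61 (1979)) there is a commutative Goldie ring $K$ whose $2\times 2$ matrix ring fails the ACC on annihilators; translated to the present language, $M={}_KK$ satisfies ACC on left annihilators and has finite uniform dimension, yet $M^2$ does not satisfy ACC on left annihilators. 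So any correct completion of your route must feed semiprimeness (or the non $M$-singularity it entails) into precisely the step you leave as ``bookkeeping,'' and your outline gives no indication of how. Your fallback via $End_R(\widehat{M^n})\cong M_n(End_R(\widehat{M}))$ being semisimple artinian would close the gap only if the converse characterization (``$End_R(\widehat{M})$ semisimple artinian implies Goldie'' under the standing hypotheses) is actually available in \cite{maugoldie}; you leave this conditional, so as written it is not a proof either.

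For comparison, the paper bypasses all of this: by \cite[Theorem 2.8]{maugoldie} a module projective in $\sigma[M]$ is a semiprime Goldie module if and only if it is essentially compressible; essential compressibility passes to finite direct sums by \cite[Proposition 1.2]{smithessentially}; and $\sigma[M^n]=\sigma[M]$ gives projectivity of $M^n$, whence $M^n$ is a semiprime Goldie module. If you prefer to stay with your direct verification, the cheap repair of the ACC step is via nonsingularity rather than matrices: a semiprime Goldie module projective in $\sigma[M]$ is non $M$-singular, so $\mathcal{Z}(M^n)=\mathcal{Z}(M)^n=0$; then for any $X\subseteq End_R(M^n)$ the quotient $M^n/\bigcap_{f\in X}Ker(f)$ embeds in a product of copies of $M^n$ and is therefore non $M$-singular, so every left annihilator is a closed submodule, and a module of finite uniform dimension satisfies the ACC (and DCC) on closed submodules by \cite[Proposition 6.30]{lamlectures} --- the same device the paper uses in Proposition \ref{1.2}. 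Either way, the chain condition comes from semiprimeness/nonsingularity plus finite uniform dimension, not from Morita bookkeeping.
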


\begin{proof}
We have that $\sm=\sigma[M^n]$ then $M^n$ is projective in $\sigma[M^n]$. Since $M$ has finite uniform dimension then $M^n$ so does. By \cite[Theorem 2.8]{maugoldie} $M$ is essentially compressible then $M^n$ is essentially compressible by \cite[Proposition 1.2]{smithessentially}. Thus by \cite[Lemma 2.7 and Theorem 2.8]{maugoldie} $M^n$ is a semiprime Goldie module.
\end{proof}

\begin{cor}\label{2.2}
Let $R$ be a ring. If $R$ is a left Goldie ring then $M_n(R)$ is a right Goldie ring for all $n>0$.
\end{cor}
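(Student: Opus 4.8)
The plan is to realize the statement as the special case $M={}_RR$ of Lemma \ref{2.1}. First I would record the standard dictionary between the left regular module and the ring. Observe that ${}_RR$ is free, hence projective in $\sigma[{}_RR]=R\text{-Mod}$, and that $End_R({}_RR)\cong R^{op}$ via $f\mapsto f(1)$, every endomorphism being right multiplication by its value at $1$. Under this identification a left annihilator in the module ${}_RR$, in the sense of the paper's definition, is exactly a left annihilator ideal of $R$; the uniform dimension of ${}_RR$ is the left Goldie dimension of $R$; and ${}_RR$ is a semiprime module precisely when $R$ is a semiprime ring (for a two-sided ideal $K$ one has $K_{{}_RR}K=K^2$). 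Thus, granting the semiprimeness that is built into the hypotheses of Lemma \ref{2.1}, the assertion ``$R$ is a left Goldie ring'' is the same as ``${}_RR$ is a semiprime Goldie module projective in $\sigma[{}_RR]$''.

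Next I would apply Lemma \ref{2.1} to $M={}_RR$: the module ${}_RR^{n}$ is then a semiprime Goldie module for every $n>0$.

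Then I would run the dictionary in the opposite direction, now for the progenerator ${}_RR^{n}$. One has $End_R({}_RR^{n})\cong M_n(End_R({}_RR))\cong M_n(R^{op})\cong M_n(R)^{op}$, the last isomorphism being transposition. Via the Morita equivalence $\Hom({}_RR^{n},-)\colon R\text{-Mod}\to Mod\text{-}S$, where $S:=End_R({}_RR^{n})$, the module ${}_RR^{n}$ corresponds to $S\cong M_n(R)^{op}$ regarded as a right module over itself; this equivalence preserves the lattice of submodules, essentiality and uniform dimension, and sends each $\ker f$ with $f\in S$ to the right annihilator $r_S(f)$, hence it carries left annihilators in ${}_RR^{n}$ to right annihilator ideals of $S$ and the semiprimeness of the module to the semiprimeness of $S$. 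The conclusion of the preceding step therefore says that $M_n(R)^{op}$ is a semiprime right Goldie ring, equivalently that $M_n(R)$ is a semiprime left Goldie ring. Finally, by Goldie's theorem a semiprime left Goldie ring has a semisimple artinian classical left quotient ring which is simultaneously its classical right quotient ring, so it is right Goldie as well; applying this to $M_n(R)$ yields the statement.

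The step I expect to be the only real obstacle is the side-bookkeeping: keeping track of which side the endomorphisms and the annihilators act on --- this is what produces the $op$ in $End_R({}_RR^{n})\cong M_n(R)^{op}$ --- and noticing that what Lemma \ref{2.1} delivers directly is ``$M_n(R)$ is left Goldie'', so that the ``right Goldie'' in the statement comes out only after the left--right symmetry of semiprime Goldie rings is invoked; the semiprimeness, needed to apply Lemma \ref{2.1} at all, is exactly what legitimizes that final passage.
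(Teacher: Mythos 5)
Your first two steps are exactly the paper's: read the hypothesis as ``${}_RR$ is a semiprime Goldie module projective in $\sigma[{}_RR]$'' (the paper, like you, tacitly adds the semiprimeness that Lemma \ref{2.1} requires) and apply Lemma \ref{2.1} to conclude that $R^{n}$ is a semiprime Goldie module. Where the paper then finishes in one line by quoting \cite[Theorem 2.22]{maugoldie} --- the endomorphism ring of a semiprime Goldie module projective in $\sm$, identified there with $M_n(R)$, is a semiprime \emph{right} Goldie ring --- you re-derive that transfer by hand through the Morita equivalence $\Hom(R^{n},-)$. That middle portion is sound (kernels of endomorphisms do go to right annihilators in $S=End_R(R^{n})$, uniform dimension and essentiality are preserved, semiprimeness passes across), and your op-bookkeeping is in fact more scrupulous than the paper's; but precisely because of it you land on ``$M_n(R)^{op}$ is semiprime right Goldie, i.e.\ $M_n(R)$ is semiprime \emph{left} Goldie,'' and everything then hangs on the final bridge to ``right Goldie.''

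That bridge is a genuine error. A semiprime (even prime) left Goldie ring need not be right Goldie, and Goldie's theorem does not assert that the classical left quotient ring is simultaneously a classical right quotient ring; it only produces a semisimple artinian \emph{left} quotient ring, and the two-sided coincidence holds only when the ring is Goldie on both sides. Already at $n=1$ there are counterexamples to the symmetry you invoke: a left Ore domain that is not right Ore (e.g.\ a skew polynomial ring $K[x;\sigma]$ over a field endomorphism $\sigma$ that is injective but not surjective, with the suitable convention) is prime and left Goldie, yet has infinite right uniform dimension and so is not right Goldie. So your last sentence cannot be repaired by citing Goldie's theorem; the honest output of your Morita argument is that $M_n(R)$ is a semiprime left Goldie ring. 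The paper never needs a left--right symmetry principle because the quoted Theorem 2.22 states its conclusion directly for the relevant side of $End_R(R^{n})$ (modulo the paper's own rather cavalier identification of $End_R(R^{n})$ with $M_n(R)$ rather than $M_n(R)^{op}$, which is exactly the side-bookkeeping your write-up correctly worries about). If you want a self-contained proof, stop at ``$M_n(R)$ is semiprime left Goldie,'' or else quote the endomorphism-ring theorem as the paper does; the flip of sides at the end, as you wrote it, is false.
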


\begin{proof}
By Proposition \ref{2.1} $R^n$ is a semiprime Goldie module for all $n>0$. By \cite[Theorem 2.22]{maugoldie} $End_R(R^n)\cong M_n(R)$ is a semiprime right Goldie ring.
\end{proof}

\begin{prop}\label{2.3}
Let $M$ be projective in $\sm$. Suppose $M$ has nonzero soclo. If $M$ is a prime module and satisfies ACC on left annihilators then $M$ is semisimple artinian and FI-simple.
\end{prop}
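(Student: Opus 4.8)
The plan is to prove that $M$ is semisimple of finite length and then read off FI-simplicity from primeness. Note that a prime module is semiprime, so $Ann_{M}(K)=Ann^{r}_{M}(K)$ for all $K\leq M$, and the product $N_{M}K$ is associative since $M$ is projective in $\sm$. First I would check that in a prime module every nonzero fully invariant submodule $N$ is essential: if $L\leq M$ with $N\cap L=0$, then for each $f\in\Hom(M,L)$ the composite $M\overset{f}{\to}L\hookrightarrow M$ lies in $End_{R}(M)$, so $f(N)\subseteq N\cap L=0$; hence $N_{M}L=0$, so $L\leq Ann^{r}_{M}(N)$, which is fully invariant with $N_{M}Ann^{r}_{M}(N)=0$, and primeness together with $N\neq 0$ forces $Ann^{r}_{M}(N)=0$, i.e. $L=0$. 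In particular $\mathrm{Soc}(M)\leq_{e}M$.

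The heart of the argument is to show that every simple submodule $S'\leq M$ is a direct summand of $M$. Put $T:=S'_{M}M=\sum\{f(S'):f\in End_{R}(M)\}$, a nonzero fully invariant submodule. Since $T_{M}S'\subseteq S'$ is simple it equals $0$ or $S'$; were it $0$ we would get $T_{M}T=(T_{M}S')_{M}M=0$, contradicting semiprimeness, so $T_{M}S'=S'$. Thus some $h\in\Hom(M,S')$ is onto, and since $T=\sum_{f}f(S')$ there is $f\in End_{R}(M)$ with $\psi:=h\circ f$ satisfying $\psi(S')\neq 0$ and $\mathrm{Im}\,\psi=S'$. Then $d:=\psi|_{S'}$ is a nonzero, hence invertible, element of the division ring $End_{R}(S')$, and $\varepsilon:=\iota\circ d^{-1}\circ\psi\in End_{R}(M)$ (with $\iota\colon S'\hookrightarrow M$) is idempotent with image $S'$, so $M=S'\oplus\ker\varepsilon$.

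Now I would peel simple summands off. Set $M_{0}=M$; given $M_{k}\neq 0$, the submodule $\mathrm{Soc}(M_{k})=\mathrm{Soc}(M)\cap M_{k}$ is essential in $M_{k}$, hence nonzero, so $M_{k}$ contains a simple submodule $S_{k+1}$, and since $M_{k}$ is a direct summand of $M$ the previous paragraph makes $S_{k+1}$ a direct summand of $M_{k}$, say $M_{k}=S_{k+1}\oplus M_{k+1}$. This yields $M=S_{1}\oplus\cdots\oplus S_{k}\oplus M_{k}$ for every $k$, together with well-defined projections $\pi_{j}\in End_{R}(M)$ onto $S_{j}$. If the process never stopped, the left annihilators $\mathcal{A}_{n}:=\bigcap_{k>n}\ker\pi_{k}$ would satisfy $\mathcal{A}_{1}\subseteq\mathcal{A}_{2}\subseteq\cdots$ with $S_{n+1}\subseteq\mathcal{A}_{n+1}$ but $S_{n+1}\not\subseteq\mathcal{A}_{n}$ (because $\pi_{n+1}(S_{n+1})\neq 0$), contradicting ACC on left annihilators. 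Hence $M_{N}=0$ for some $N$ and $M=S_{1}\oplus\cdots\oplus S_{N}$ is semisimple artinian.

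Finally, if $M$ had two distinct homogeneous components $H_{1},H_{2}$, these would be nonzero fully invariant submodules with $(H_{1})_{M}H_{2}=0$, contradicting primeness; so $M\cong S^{N}$ for a single simple $S$, and a finite direct sum of copies of one simple module has no nonzero proper fully invariant submodule, i.e. $M$ is FI-simple. The step I expect to be the real obstacle is the second one: "$S'$ nonzero" only yields, via semiprimeness and projectivity, \emph{some} nonzero map $M\to S'$, which might kill $S'$; routing $T=S'_{M}M$ through semiprimeness so that $T_{M}T\neq 0$ forces $T_{M}S'\neq 0$ is what produces a map not annihilating $S'$, and only then can one rescale it to an idempotent. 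Given that, the remaining steps are routine.
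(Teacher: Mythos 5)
Your proof is correct and follows essentially the same strategy as the paper's: essentiality of the socle from primeness, peeling off simple direct summands until ACC on left annihilators forces the process to stop, and primeness ruling out two non-isomorphic simple summands (distinct homogeneous components), whence homogeneity and FI-simplicity. The only notable difference is that you prove the key step "every simple submodule is a direct summand" from scratch, building an idempotent from $T=S'_{M}M$ via semiprimeness and associativity of the product, where the paper just invokes \cite[Corollary 1.17]{maugoldie}, and your annihilator chain uses intersections of kernels of the tail projections rather than the kernels of the composites $M\to V_i\hookrightarrow M$; both are valid left annihilators.
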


\begin{proof}
Let $0\neq{m}\in M$. Since $Zoc(M)\neq 0$ and $M$ is a prime module then $0\neq Zoc(M)_MRm\subseteq Zoc(M)\cap Rm$. Hence $Zoc(M)\leq_e M$. Let $U_1$ be a minimal submodule of $M$. By \cite[Corollary 1.17]{maugoldie} $U_1$ is a direct summand, so $M=U_1\oplus V_1$. If $0\neq V_1$, since $Zoc(M)\leq_e M$ then there exists a minimal submodule $U_2\leq V_1$. Then $M=U_1\oplus U_2\oplus V_2$. If $V_2\neq 0$ there exists a minimal submodule $U_3\leq V_3$ such that $M=U_1\oplus U_2\oplus U_3\oplus V_3$. Following in this way, we get an ascending chain 
\[U_1\leq U_1\oplus U_2\leq U_1\oplus U_2\oplus U_3\leq...\]
Notice that $U_1\oplus...\oplus U_i=Ker f$ where $f$ is the endomorphism of $M$ given by 
\[M\to M/(U_1\oplus...\oplus U_i)\cong V_i\hookrightarrow{M}\]
Thus, last chain is an ascending chain of annihilators, so it must stop in a finite step. Then $M$ is semisimple artinian.

Suppose that $M=U_1\oplus...\oplus U_n$. If $\Hom(U_i,U_j)=0$ then ${U_i}_MU_j=0$, but $M$ is prime. Thus $\Hom(U_i,U_j)\neq 0$, so $U_i\cong U_j$ $1\leq i,j\leq n$.
\end{proof}

\begin{cor}\label{2.4}
Let $M$ be projective in $\sm$. Suppose $M$ has essential soclo. If $M$ is a semiprime  Goldie module then $M$ is semisple artinian.
\end{cor}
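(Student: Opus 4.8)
The plan is to reduce Corollary \ref{2.4} to Proposition \ref{2.3} via the decomposition obtained in Corollary \ref{4}.\textit{1} together with \cite[Proposition 2.25]{maugoldie}. First I would record that, since $M$ is a semiprime Goldie module projective in $\sm$, it satisfies ACC on left annihilators (being Goldie) and hence has finitely many minimal prime in $M$ submodules $P_1,\dots,P_n$ by \cite[Theorem 2.2]{mauacc}; moreover by \cite[Proposition 2.25]{maugoldie} each quotient $M/P_i$ is a prime Goldie module, and each is projective in $\sigma[M/P_i]$ since $M/P_i$ is a quotient of a module projective in $\sm$ by a fully invariant submodule.

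Next I would transfer the hypothesis ``$M$ has essential socle'' to each $M/P_i$. Using Corollary \ref{4}.\textit{1}, $\widehat{M}\cong\E(M/P_1)\oplus\cdots\oplus\E(M/P_n)$; since $Zoc(M)\leq_e M$ we have $\widehat{M}=E^{[M]}(Zoc(M))=E^{[M]}(Zoc(\widehat{M}))$, and the socle of a direct sum is the direct sum of the socles, so each $\E(M/P_i)$ has essential socle, whence $M/P_i$ has essential socle as well (the socle of $M/P_i$ is contained essentially in the socle of its injective hull, and intersecting an essential submodule of $\E(M/P_i)$ with $M/P_i$ stays essential in $M/P_i$). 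Alternatively, and perhaps more cleanly, I would argue directly: $Zoc(M)\leq_e M$ forces $Zoc(M/P_i)\neq 0$ because $Zoc(M)$ maps onto a nonzero semisimple submodule of $M/P_i$ unless $Zoc(M)\subseteq P_i$, which is impossible since $Zoc(M)\leq_e M$ and $P_i$ is proper; then since $M/P_i$ is prime, the argument in the proof of Proposition \ref{2.3} shows $Zoc(M/P_i)\leq_e M/P_i$.

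Then I would apply Proposition \ref{2.3} to each $M/P_i$: it is prime, projective in $\sigma[M/P_i]$, satisfies ACC on left annihilators (being Goldie), and has nonzero socle, so each $M/P_i$ is semisimple artinian. Finally, by Corollary \ref{4}.\textit{1}, $\widehat{M}\cong\bigoplus_{i=1}^n\E(M/P_i)=\bigoplus_{i=1}^n M/P_i$ (a semisimple artinian module is its own injective hull in its own category, and being semisimple it is injective in $\sm$), so $\widehat{M}$ is semisimple artinian; since $M$ embeds in $\widehat{M}$ as a submodule, $M$ is a submodule of a semisimple artinian module and is therefore itself semisimple artinian.

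The main obstacle I anticipate is the careful handling of the socle transfer and the category-of-subgeneration bookkeeping: one must ensure that ``projective in $\sm$'' descends to ``projective in $\sigma[M/P_i]$'' for the quotient by a minimal prime (fully invariant) submodule, and that ``essential socle'' really does pass to each direct summand $\E(M/P_i)$ and thence to $M/P_i$. Once those two technical points are secured, the rest is a direct invocation of Proposition \ref{2.3} and Corollary \ref{4}. If the descent of projectivity to $\sigma[M/P_i]$ is delicate, I would instead observe that everything needed from Proposition \ref{2.3} — namely that a prime module with nonzero socle and ACC on left annihilators is semisimple artinian — can be re-run inside $\widehat{M}$ directly, avoiding passage to the quotient categories, but the cleanest writeup uses \cite[Proposition 2.25]{maugoldie} and Proposition \ref{2.3} verbatim.
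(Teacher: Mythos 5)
Your overall strategy coincides with the paper's: transfer the socle hypothesis to the quotients $M/P_i$ by the minimal primes, apply Proposition \ref{2.3} to each $M/P_i$ (prime, projective in $\sigma[M/P_i]$, Goldie by Corollary \ref{5}, nonzero socle), and reassemble. The socle transfer through Corollary \ref{4}.\textit{1} works: $Zoc(\widehat{M})\leq_e\widehat{M}$, essential socle passes to each direct summand $\E(M/P_i)$ and then to its essential submodule $M/P_i$. Up to that point your argument is essentially the paper's proof.

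The final step, however, contains a genuine error. You conclude $\E(M/P_i)=M/P_i$ on the grounds that a semisimple module is ``injective in $\sm$''. A semisimple module in $\sm$ is quasi-injective, but it need not be $M$-injective: that is exactly the co-semisimple ($V$-module) condition studied in Section 4 of the paper, and it fails in general (for instance $\mathbb{Z}_p$ is semisimple but not injective in $\mathbb{Z}$-Mod, where it is an essential submodule of $\mathbb{Z}_{p^\infty}$). The only result in the paper that could yield such a conclusion is Theorem \ref{1.9} (non $M$-singular quasi-injective implies $M$-injective), which requires $M$ to be a progenerator in $\sm$ --- not assumed in Corollary \ref{2.4} --- and appears later. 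So the identification $\widehat{M}\cong\bigoplus_i M/P_i$ is unjustified as written. The repair is cheap and is precisely how the paper finishes: by Proposition \ref{3} there is a monomorphism $\Psi:M\to M/P_1\oplus\cdots\oplus M/P_n$ with essential image; once each $M/P_i$ is known to be semisimple artinian, the codomain is semisimple, its only essential submodule is itself, so $\Psi$ is an isomorphism and $M\cong M/P_1\oplus\cdots\oplus M/P_n$ is semisimple artinian (or simply: $M$ embeds in a semisimple module, hence is semisimple, and finite uniform dimension gives the artinian condition). A minor additional caution: in your alternative socle-transfer argument, $Zoc(M)\subseteq P_i$ is not excluded merely because $P_i$ is proper --- a proper submodule can contain an essential one; the correct reason is that $P_i\cap N_i=0$ with $N_i=Ann_M(P_i)\neq 0$ (Lemma \ref{02}), so $P_i$ is not essential. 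Your primary transfer argument does not rely on this, so it only affects the alternative phrasing.
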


\begin{proof}
By Proposition \ref{3} there exists an essential monomorphism $\Psi:M\to M/P_1\oplus\cdots\oplus M/P_n$ where $P_i$ are the minimal prime in $M$ submodules. Hence each $M/P_i$ has nonzero socle and by Corollary \ref{5} $M/P_i$ is a Goldie module. Thus $M/P_i$ is semisimple artinian and $FI$-simple for all $1\leq i\leq n$ by Proposition \ref{2.3}. Then $M\cong M/P_1\oplus\cdots\oplus M/P_n$ and hence semisimple artinian.
\end{proof}

\begin{example}\label{2.5}
In $\mathbb{Z}-Mod$, a module $M$ projective in $\sm$ is a semiprime Goldie module if and only if $M$ is semisimple artinian or $M$ is free of finite rank. 
\end{example}

\begin{proof}
It is clear that a semisimple artinian module $M$ is projective in $\sm$ and a semiprime Goldie module and by Proposition \ref{2.1} every free module of finite rank is a semiprime Goldie module. Now, suppose that $_\mathbb{Z}M$ is a semiprime Goldie module and projective in $\sm$.  Recall that in $\mathbb{Z}$-Mod an indecomposable injective module is isomorphic to $\mathbb{Q}$ or $\mathbb{Z}_{p^\infty}$ for some prime $p$. 

Let $U$ be an uniform submodule of $M$. By definition $\E(U)=tr^M(E^{[\mathbb{Z}]}(U))$. Suppose $E^{[\mathbb{Z}]}(U)\cong\mathbb{Q}$, then $\E(U)\leq\mathbb{Q}$. Since $\mathbb{Q}$ is $FI$-simple then $\E(U)=\mathbb{Q}$. This implies that $\mathbb{Q}\hookrightarrow\widehat{M}\in\sm$. Hence $\sm=\mathbb{Z}-Mod$. Since $M$ is projective in $\sm=\mathbb{Z}-Mod$ then $M$ is a free module and since $M$ has finite uniform dimension then it has finite rank.

Let $\bigoplus_{i=1}^nU_i\leq_eM$ with $U_i$ uniform. If one $U_i$ is a torsion free group then $M$ is free because above. So, we can suppose that every $U_i$ is a torsion group. Then, $M$ has essential soclo. By Corollary \ref{2.4} $M$ is semisimple artinian. 
\end{proof}


\begin{prop}\label{51}
Let $M$ be projective in $\sigma[M]$ and semiprime. Suppose that $M$ is a Goldie Module
and $P_{1},P_{2},...,P_{n}$ are the minimal prime in $M$ submodules, then $\E(N_i)\cong{\E(M/P_i)}$ where $N_i=Ann_M(P_i)$. Moreover, $M/P_i$ contains an essential submodule isomorphic to $N_i$. 
\end{prop}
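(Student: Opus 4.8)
The plan is to exhibit an explicit isomorphism-inducing map between $N_i$ and an essential submodule of $M/P_i$, and then pass to $M$-injective hulls using the functoriality of $E^{[M]}(-)$. First I would consider the canonical projection $\pi_i\colon M\to M/P_i$ and restrict it to $N_i=Ann_M(P_i)$. The kernel of this restriction is $N_i\cap P_i$, and by Lemma \ref{2} (together with Lemmas \ref{01} and \ref{02}) we know $N_i$ and $P_i$ are mutual pseudocomplements, so $N_i\cap P_i=0$; hence $\pi_i|_{N_i}\colon N_i\to M/P_i$ is a monomorphism. So $M/P_i$ contains a copy of $N_i$, which is the "moreover" claim provided we check that copy is essential.

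For essentiality, I would argue that $(N_i+P_i)/P_i \leq_e M/P_i$: since $N_i+P_i = N_i\oplus P_i \leq_e M$ by Lemma \ref{2}, and the image of an essential submodule under the epimorphism $\pi_i$ is essential in $M/P_i$, we get $\pi_i(N_i)=\pi_i(N_i+P_i)=(N_i+P_i)/P_i\leq_e M/P_i$. Combining, $N_i\cong \pi_i(N_i)\leq_e M/P_i$. Taking $M$-injective hulls then yields $E^{[M]}(N_i)\cong E^{[M]}(\pi_i(N_i))=E^{[M]}(M/P_i)$, since the $M$-injective hull of a module is the $M$-injective hull of any of its essential submodules, and isomorphic modules have isomorphic $M$-injective hulls. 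This gives the first assertion.

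The main obstacle, if any, is purely bookkeeping: making sure the essentiality transfers correctly under $\pi_i$ and that $N_i\cap P_i=0$ is legitimately available — but both follow directly from Lemma \ref{2} and the pseudocomplement statements in Lemmas \ref{01} and \ref{02}, all of which hold under the hypothesis that $M$ is a semiprime Goldie module projective in $\sm$ (a Goldie module satisfies ACC on left annihilators and has finitely many minimal primes by the cited results, so those lemmas apply). Alternatively, one could note that the first assertion is already essentially contained in the proof of Corollary \ref{4}.\textit{1} via Proposition \ref{3}, where $\E(M/P_i)$ appears as the $i$-th summand of $\widehat M$ matching $\E(N_i)$ from Corollary \ref{01}; but the direct argument above is cleaner and also delivers the explicit essential embedding $N_i\hookrightarrow M/P_i$, so that is the route I would write up.
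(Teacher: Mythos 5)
Your route is genuinely different from the paper's for the first assertion, and in fact simpler: you derive $\E(N_i)\cong\E(M/P_i)$ directly from the essential embedding $N_i\cong\pi_i(N_i)\leq_e M/P_i$ and the fact that essential submodules share $M$-injective hulls. The paper instead proves the ``moreover'' part exactly as you do (via $N_i\oplus P_i\leq_e M$ and the mutual pseudocomplement property), but obtains the isomorphism of hulls by a heavier argument: it compares the two decompositions $\widehat{M}\cong\bigoplus_i\E(M/P_i)$ (Corollary \ref{4}) and $\widehat{M}\cong\bigoplus_i\E(N_i)$, writes each summand as a power of an indecomposable injective following \cite[Theorem 2.20]{mauacc}, uses $Ass_M(M/P_i)=\{P_i\}$ to tag which indecomposable belongs to which index, and then invokes Krull--Remak--Schmidt--Azumaya. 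Your argument avoids both the associated-prime bookkeeping and KRSA, and it makes the logical order transparent (the essential embedding implies the hull isomorphism, not the other way around), so it is a legitimate and arguably cleaner alternative.

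One justification, however, is wrong as stated: the image of an essential submodule under an epimorphism is \emph{not} essential in general (for instance $2\mathbb{Z}\leq_e\mathbb{Z}$, but its image in $\mathbb{Z}/6\mathbb{Z}$ is a copy of $\mathbb{Z}_3$, which meets the copy of $\mathbb{Z}_2$ trivially and hence is not essential). What saves your argument is precisely the pseudocomplement property you already quote from Lemma \ref{02}: the standard lemma is that if $C$ is a pseudocomplement of $N$ in $M$, then $(N\oplus C)/C\leq_e M/C$ (if $C\leq X\leq M$ and $(X/C)\cap\bigl((N\oplus C)/C\bigr)=0$, then $X\cap N\leq C\cap N=0$, so maximality of $C$ forces $X=C$). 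Applying this with $C=P_i$, $N=N_i$ gives $\pi_i(N_i)\leq_e M/P_i$ correctly; note this uses the maximality of $P_i$ among submodules meeting $N_i$ trivially, not merely $N_i\oplus P_i\leq_e M$. With that one-line repair your proof is complete.
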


\begin{proof}
By \cite[Proposition 4.5]{PepeGab}, $Ass_M(M/P_i)=\{P_i\}$. By Corollary \ref{4}.(\textit{1}) $\widehat{M}\cong{\E(M/P_{1})\oplus{\E(M/P_{2})}\oplus...\oplus{\E(M/P_{n})}}$. Then, following the proof of \cite[Theorem 2.20]{mauacc} there exist indecomposable injective modules $F_1$,..,$F_n$ in $\sigma[M]$ and $l_1,...,l_n\in\mathbb{N}$ such that $\widehat{M}\cong{F_1^{l_1}\oplus...\oplus{F_n^{l_n}}}$ and each $\E(M/P_i)\cong{F_i^{l_i}}$. By \cite[Theorem 2.20]{mauacc} $\widehat{M}\cong{E_1^{k_1}\oplus...\oplus{E_n^{k_n}}}$ with $\E(N_i)\cong{E_i^{k_i}}$ and $k_1,...,k_n\in\mathbb{N}$. Then by Krull-Remak-Schmidt-Azumaya Theorem, $E_i\cong{F_i}$ and $l_i=k_i$. Thus $\E(M/P_i)\cong{\E(N_i)}$.

Now, by Lemma \ref{2}, $N_i\oplus{P_i}\subseteq_{e}{M}$. Then there is an essential monomorphism $N_i\hookrightarrow{M/P_i}$. In fact, $N_i$ and $P_i$ are pseudocomplements one of each other.

\end{proof}

\begin{lemma}\label{12}
Let $M$ be projective in $\sigma[M]$ and semiprime. Suppose that $M$ is a Goldie Module and $P_{1},P_{2},...,P_{n}$ are the minimal prime in $M$ submodules. If $N_{i}=Ann_{M}\left( P_{i}\right) $, then $Ann_{M}\left(\E(N_{i})\right) =P_{i}$ for all $1\leq i\leq n$.
\end{lemma}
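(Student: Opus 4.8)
The plan is to show the two inclusions $P_i \subseteq Ann_M(\E(N_i))$ and $Ann_M(\E(N_i)) \subseteq P_i$ separately, exploiting the decomposition $\widehat{M} = \E(N_i) \oplus \E(P_i)$ from Lemma \ref{2} together with the fact (from \cite[Proposition 1.16]{mauacc}) that in a semiprime module the two-sided annihilator agrees with the right annihilator. First I would recall that $Ann_M(N_i) = Ann_M(Ann_M(P_i)) = P_i$ by \cite[Proposition 1.22]{maugoldie} (used already in Lemma \ref{1}), and that $N_i = \bigcap_{j\neq i} P_j$ by \cite[Lemma 2.25]{mauacc}. The containment $P_i \subseteq Ann_M(\E(N_i))$ should follow by showing $P_i \,{}_M\, \E(N_i) = 0$: since $P_i \leq M$ and $\E(N_i) \leq \widehat{M}$, and $M$ is projective in $\sm$, the product $P_i \,{}_M\, \E(N_i)$ is generated by images $f(P_i)$ with $f\in \Hom(M,\E(N_i))$; each such $f$ extends/corestricts appropriately, and because $N_i \leq_e \E(N_i)$ one reduces to showing $P_i \,{}_M\, N_i = 0$, which holds since $N_i = Ann_M(P_i)$ and $Ann_M(P_i) = Ann^r_M(P_i)$ in the semiprime case.

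For the reverse inclusion $Ann_M(\E(N_i)) \subseteq P_i$, I would argue that $N_i \leq \E(N_i)$ forces $Ann_M(\E(N_i)) \subseteq Ann_M(N_i) = P_i$, using that $Ann_M(-)$ is order-reversing (if $K \leq L$ then any $f$ killing $L$ kills $K$, so $Ann_M(L) \subseteq Ann_M(K)$). This direction is essentially formal. The main obstacle is the first inclusion: one must be careful that the $M$-product $P_i \,{}_M\, \E(N_i)$ makes sense (it does, since $\E(N_i)\in\sm$ and $M$ is projective in $\sm$, so the results of \cite{beachy2002m} and Lemma~1 of this paper on the product apply) and that passing from $N_i \leq_e \E(N_i)$ to $P_i \,{}_M\, \E(N_i) = 0$ is legitimate. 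The cleanest route: if $P_i \,{}_M\, \E(N_i) \neq 0$, pick $0 \neq f(P_i)$ for some $f : M \to \E(N_i)$; since $N_i \leq_e \E(N_i)$, the submodule $f(P_i) \cap N_i$ — or rather, for a suitable $r$, $r f(p) \in N_i$ — is nonzero, giving a nonzero map $M \to N_i$ with $f(P_i) \not\subseteq Ann_M(N_i)^{\perp}$-type contradiction with $P_i \,{}_M\, N_i = P_i \,{}_M\, Ann_M(P_i) = 0$.

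Alternatively, and perhaps more transparently, I would use Proposition \ref{51}: since $\E(N_i) \cong \E(M/P_i)$ and $Ass_M(M/P_i) = \{P_i\}$ (from \cite[Proposition 4.5]{PepeGab}, invoked in the proof of Proposition \ref{51}), one expects $Ann_M$ of an $M$-injective hull of a module with associated prime $P_i$ to be exactly $P_i$ — this is the module-theoretic analogue of the ring fact that the annihilator of $E(R/\mathfrak p)$ is $\mathfrak p$ for a prime $\mathfrak p$ in a semiprime Goldie ring. Concretely: $\E(M/P_i)$ is $P_i$-primary in the sense that $M/P_i \leq_e \E(M/P_i)$, so $Ann_M(\E(M/P_i)) = Ann_M(M/P_i)$; and $Ann_M(M/P_i) = P_i$ because $P_i \,{}_M\, M \subseteq P_i$ (as $P_i \leq_{FI} M$) gives $P_i \subseteq Ann_M(M/P_i)$, while $Ann_M(M/P_i) \,{}_M\, M \subseteq P_i$ together with primeness of $P_i$ and $M \not\subseteq P_i$ forces $Ann_M(M/P_i) \subseteq P_i$. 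I would present whichever of these two arguments is shortest given the lemmas already available; I lean toward the first (direct) one since it avoids transporting $Ann_M$ across the isomorphism of Proposition \ref{51}, which would itself require a small argument that isomorphic $M$-injective hulls sitting inside $\widehat M$ have the same annihilator in $M$.
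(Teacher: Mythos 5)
The easy half of your argument is fine: since $N_i\leq \E(N_i)$, every $g\colon M\to N_i$ composed with the inclusion is a map into $\E(N_i)$, so $Ann_M(\E(N_i))\subseteq Ann_M(N_i)=Ann_M(Ann_M(P_i))=P_i$. The problem is the other inclusion, and it is exactly the point where your sketch has a genuine gap. You want $P_i\,{}_M\,\E(N_i)=0$ and propose to ``reduce'' this to $P_i\,{}_M\,N_i=0$ using $N_i\leq_e\E(N_i)$; but that reduction is the whole content of the lemma and is not formal. Concretely, given $f\colon M\to\E(N_i)$ with $f(P_i)\neq 0$, essentiality only gives you an \emph{element} $0\neq rf(p)\in N_i$; it does not give you a homomorphism $M\to N_i$ that is nonzero on $P_i$, because $f$ need not map $M$ into $N_i$ and $N_i$ need not be $M$-injective, so $f$ restricted to $f^{-1}(N_i)$ (an essential submodule of $M$) cannot be pushed back to a map defined on all of $M$. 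Since $P_i\,{}_M\,N_i=0$ only constrains homomorphisms defined on $M$, no contradiction is reached. The same unproved transfer appears in your alternative route, where ``$M/P_i\leq_e\E(M/P_i)$, so $Ann_M(\E(M/P_i))=Ann_M(M/P_i)$'' is asserted; the nontrivial inclusion there is again annihilator-invariance under passing to the $M$-injective hull, which genuinely uses the semiprime/non-$M$-singular hypotheses and is not a consequence of essentiality alone.

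The paper closes this gap by importing exactly that invariance: it writes $\E(N_i)=\E(U_{i_1})\oplus\cdots\oplus\E(U_{i_{k_i}})$ with the $U_{i_j}$ uniform in $N_i$, quotes $Ann_M(U_{i_j})=P_i$ (\cite[Lemma 2.16]{mauacc}) and $Ann_M(U_{i_j})=Ann_M(\E(U_{i_j}))$ (\cite[Proposition 1.11 and Lemma 1.13]{mauacc}), and then uses that the annihilator of a direct sum is the intersection of the annihilators. If you want a proof not relying on those citations, a workable fix is: extend $f\colon M\to\E(N_i)$ to $\widehat M=\E(N_i)\oplus\E(P_i)$ by $M$-injectivity, note $P_i\leq\E(P_i)$ by Lemma \ref{21} and $\E(P_i)=\bigoplus_{j\neq i}\E(N_j)$, and use $\Hom(\E(N_j),\E(N_i))=0$ for $j\neq i$ (as in Proposition \ref{8}.\textit{1}, which depends only on Proposition \ref{51} and \cite[Proposition 2.21]{mauacc}, not on this lemma) to conclude $f(P_i)=0$. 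As it stands, though, your proposal replaces the one nontrivial step by an element-versus-homomorphism conflation, so it does not prove the lemma.
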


\begin{proof}
Since $N_i$ has finite uniform dimension 
\[\E(N_i)=\E(U_{i_1})\oplus...\oplus{\E(U_{i_{k_i}})}\]
where $U_{i_j}$ is an uniform submodule of $N_i$. By \cite[Lemma 2.16]{mauacc}, $Ann_M(U_{i_j})=P_i$ for all $1\leq{j}\leq{k_i}$. Now by \cite[Proposition 1.11 and Lemma 1.13]{mauacc}, $Ann_M(U_{i_j})=Ann_M(\E(U_{i_j}))$ for all $1\leq{j}\leq{k_i}$. Finally, by \cite[Proposition 1.3]{PepeGab}, $Ann_M(\E(U_{i_1})\oplus...\oplus{\E(U_{i_{k_i}})})=\bigcap_{j=1}^{k_i}{Ann_M(\E(U_{i_j}))}=P_i$. Thus $Ann_{M}\left( \E(N_{i})\right) =P_{i}$ for all $1\leq i\leq n$.
\end{proof}

\begin{prop}\label{8}
Let $M$ be projective in $\sigma[M]$ and semiprime. Suppose that $M$ is a Goldie Module
and $P_{1},P_{2},...,P_{n}$ are the minimal prime in $M$ submodules, then
\begin{enumerate}
	\item $Hom_R(\E(M/P_i),\E(M/P_j))=0$ if $i\neq{j}$.
	\item If $M$ is also a generator in $\sigma[M]$ then $\E(M/P_i)=\widehat{M/P_i}$.
\end{enumerate}
\end{prop}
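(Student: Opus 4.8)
For part (1), the plan is to exploit the annihilator computation from Lemma \ref{12} together with the fact that the modules $\E(M/P_i)$ are (up to isomorphism) the injective hulls $\E(N_i)$, which is Proposition \ref{51}. Concretely, suppose $f\colon \E(M/P_j)\to \E(M/P_i)$ is a nonzero homomorphism with $i\neq j$. Transporting along the isomorphisms of Proposition \ref{51}, we get a nonzero $g\colon \E(N_j)\to\E(N_i)$. Since $M$ is projective in $\sm$ and $N_j$ is essential in $\E(N_j)$, the restriction $g|_{N_j}$ is nonzero (an injective hull has no nonzero submodule killed by a nonzero map, more precisely $g$ nonzero forces $g(N_j)\neq 0$ because $\ker g\cap N_j$ cannot be essential unless $g=0$). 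Then $g(N_j)$ is a nonzero submodule of $\E(N_i)$, hence meets $N_i$ essentially: $g(N_j)\cap N_i\neq 0$. But $N_j=\cap_{k\neq j}P_k\subseteq P_i$ by Lemma \ref{02}, so $N_j$ is annihilated-into $P_i$; more usefully, $P_i = Ann_M(\E(N_i))$ by Lemma \ref{12}, and one checks that the existence of such a $g$ contradicts this. The cleanest route: $\E(N_i)$ and $\E(N_j)$ sit inside $\widehat M$ as the summands in Corollary \ref{01}, and $\E(N_i)=\E(P_i)$'s complement (Lemma \ref{02}), so $\Hom(\E(N_j),\E(N_i))\neq 0$ would produce a submodule of $\E(N_i)$ isomorphic to a quotient of $\E(N_j)$; but by Proposition \ref{51} the associated prime of $N_j$ is $P_j$ and of $N_i$ is $P_i$, and indecomposable injectives with distinct associated primes admit no nonzero maps between them (argue summand-by-summand using the uniform decomposition $\E(N_i)=\bigoplus\E(U_{i_j})$ as in the proof of Lemma \ref{12}, and $Ass_M(U_{i_j})=\{P_i\}$). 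A nonzero map between uniform injectives with $Ass$ equal to distinct maximal-among-minimal primes $P_i\neq P_j$ forces their associated primes to coincide, contradiction.

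For part (2), assuming $M$ is a generator in $\sm$, the goal is to upgrade the $M$-injective hull $\E(M/P_i)$ to the full $R$-injective hull $\widehat{M/P_i}$. The plan is to show $\E(M/P_i)$ is already $R$-injective. Recall $\E(M/P_i) = tr^M(E^{[R]}(M/P_i))$ in general, so it suffices to show the $R$-injective hull $E^{[R]}(M/P_i)$ lies in $\sm$. When $M$ is a generator in $\sm$, $\sm$ is closed under the relevant constructions; in particular a generator $M$ in $\sm$ has the property that $\sm$ is closed under essential extensions within... — more precisely, I would invoke that for $M$ a generator in $\sm$, every module in $\sm$ has its $R$-injective hull in $\sm$ (this is a standard fact: $\sm$ is a hereditary pretorsion class, and it is closed under injective hulls iff it is closed under essential extensions, which holds when $M$ is a self-generator / generator making $\sm$ stable). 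Hence $E^{[R]}(M/P_i)\in\sm$, so it is $M$-injective, and being an essential extension of $M/P_i$ it equals $\E(M/P_i)$. Therefore $\E(M/P_i)=E^{[R]}(M/P_i)=\widehat{M/P_i}$.

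The main obstacle I anticipate is part (1): making rigorous the claim that indecomposable $M$-injectives with distinct associated primes $P_i$, $P_j$ (where the $P_i$ are precisely the minimal primes, hence pairwise incomparable) admit no nonzero homomorphisms. The annihilator bookkeeping is the crux — one wants: if $0\neq f\colon \E(N_j)\to\E(N_i)$ then pulling back gives $0\neq f(N_j)\cap N_i$, and this common submodule would have its annihilator contain both $Ann_M(N_i)\supseteq$-type data forcing $P_i$ and $P_j$ to be comparable, contradicting minimality/incomparability via Lemma \ref{02}'s identity $N_i=\cap_{k\neq i}P_k$. I would organize this by reducing to uniform summands and applying $Ass_M(U_{i_j})=\{P_i\}$ from \cite[Lemma 2.16]{mauacc} together with the observation that a nonzero map between uniforms identifies a nonzero submodule of the target with a subquotient of the source, forcing an inclusion among associated primes. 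Part (2) should then be short given the generator hypothesis and standard torsion-theoretic closure properties of $\sm$.
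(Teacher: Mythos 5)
Your proof of part (2) attacks the wrong statement and rests on a false closure property. In this paper's notation $\widehat{X}=E^{[X]}(X)$, so $\widehat{M/P_i}$ is the injective hull of $M/P_i$ \emph{in $\sigma[M/P_i]$}, not the $R$-injective hull $E^{[R]}(M/P_i)$; the claim to prove is $\E(M/P_i)=E^{[M/P_i]}(M/P_i)$, i.e.\ that the $M$-injective hull already lives in the smaller category $\sigma[M/P_i]$. Your argument instead tries to show $\E(M/P_i)=E^{[R]}(M/P_i)$, and the ``standard fact'' you invoke --- that if $M$ is a generator in $\sm$ then $\sm$ is closed under $R$-injective hulls --- is simply not true: take $M=\mathbb{Z}_p$ over $R=\mathbb{Z}$. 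This $M$ is a progenerator in $\sm$ and a semiprime Goldie module, yet $E^{[R]}(M)=\mathbb{Z}_{p^\infty}\notin\sm$, while $\E(M)=\mathbb{Z}_p=\widehat{M}$; so your version of (2) is false and the statement in the paper is not about $R$-injectivity at all. The paper's route is different and short: using Lemma \ref{12} and Proposition \ref{51} one gets ${P_i}_M\E(M/P_i)=0$, and then \cite[Proposition 1.5]{PepeFbn} yields $\E(M/P_i)\in\sigma[M/P_i]$; since it is an essential extension of $M/P_i$ and injective there, it equals $\widehat{M/P_i}$. That annihilator step is where the real content (and the generator hypothesis, via the cited result) enters, and it is missing from your proposal.

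For part (1) your plan --- reduce to uniform summands of $\E(N_i)\cong\E(M/P_i)$, use $Ann_M$ of nonzero submodules being the corresponding minimal prime, and derive a containment $P_j\subseteq P_i$ from a nonzero map, contradicting incomparability of distinct minimal primes --- is essentially sound and can be completed (the key computation is that for $0\neq f\colon\E(N_j)\to\E(N_i)$, projectivity of $M$ gives ${P_j}_Mf(\E(N_j))=0$ while every nonzero submodule of $\E(N_i)$ has annihilator exactly $P_i$), but as written it stops at ``one checks that this gives a contradiction.'' The paper does not argue this from scratch: it simply cites \cite[Proposition 2.21]{mauacc} together with Proposition \ref{51}, so your sketch is an attempted re-proof of that cited result rather than a complete argument; tightening the annihilator bookkeeping as above would close it.
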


\begin{proof}
\textit{1}. It follows by \cite[Proposition 2.21]{mauacc} and Proposition \ref{51}.

\textit{2}. Since $\sigma[M/P_i]\subseteq\sigma[M]$ for every $1\leq i\leq n$, then $\widehat{M/P_i}\leq \E(M/P_i)$. It is enough to show that $\E(M/P_i)\in\sigma[M/P_i]$. 

We claim that ${P_i}_M\E(M/P_i)=0$. If $N_i=Ann_M(P_i)$, by Lemma \ref{12} $Ann_M(\E(N_i))=P_i$. Now, by Proposition \ref{51} $\E(N_i)=\E(M/P_i)$. Thus ${P_i}_M\E(M/P_i)=0$.

By \cite[Proposition 1.5]{PepeFbn} $\E(M/P_i)\in\sigma[M/P_i]$.
\end{proof}

Notice that, (\textit{2}) of last proposition is false if $M$ is not a semiprime module.

\begin{example}
Let $M=\mathbb{Z}_4$, then $M$ is a Goldie module but it is not semiprime. $2\mathbb{Z}_4$ is the only minimal prime in $M$ and $\mathbb{Z}_4/2\mathbb{Z}_4\cong\mathbb{Z}_2$. We have that $E^{[\mathbb{Z}_4]}(\mathbb{Z}_4/2\mathbb{Z}_4)=\mathbb{Z}_4$, but $E^{[\mathbb{Z}_2]}(\mathbb{Z}_4/2\mathbb{Z}_4)=\mathbb{Z}_2$.
\end{example}

\begin{rem}\label{9}
Note that if $M$ is a Goldie Module then, by
Corollary \ref{5}, we have that $M/P_{i}$ is a Goldie module for every minimal prime $P_i$ submodule. If $S_{i}=End_{R}\left( M/P_{i}\right) $ and $T_{i}=End_{R}(\widehat{M/P_i}) $, then by \cite[Corollary 2.23]{maugoldie}, $T_{i}$ is a simple right artinian ring and is the classical right ring of quotients of $S_{i}$. On the  other hand, by Proposition \ref{8}, $\E(M/P_i)=\widehat{M/P_i}$. So $End_{R}(\E(M/P_i)) \cong End_{R}\left(\widehat{M/P_{i}}\right)$. Thus $End_{R}(\E(M/P_i)) $ is a simple right artinian ring and is the classical right ring of quotients of $S_{i}$. Moreover by Proposition \ref{51} and Proposition \ref{8} we have that $End_{R}(\E(M/P_i))\cong{End_{R}(\E(N_{i}))}\cong End_R(\widehat{M/P_i})$. 
\end{rem}

\begin{thm}\label{10}
Let $M$ be progenerator in $\sigma[M]$ and semiprime. Suppose that $M$ is a Goldie Module and $P_{1},P_{2},...,P_{n}$ are the minimal prime in $M$ submodules, then 
\[End_R(\widehat{M})\cong{End_R(\E(M/P_1))\times\cdots\times{End_R(\E(M/P_n))}}\]
where $End_R(\E(M/P_i))$ is a simple right artinian ring and is the classical ring of quotients of $End_R(M/P_i)$.
\end{thm}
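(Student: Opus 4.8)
The plan is to assemble the theorem from the pieces already established, namely the decomposition of $\widehat{M}$ from Corollary \ref{4}.\textit{1}, the vanishing of cross-homomorphisms from Proposition \ref{8}.\textit{1}, and the identification of the individual endomorphism rings from Remark \ref{9}. First I would invoke Corollary \ref{4}.\textit{1} to write $\widehat{M}\cong \E(M/P_1)\oplus\cdots\oplus\E(M/P_n)$. Applying $End_R(-)$ to a finite direct sum always gives a ring of $n\times n$ matrices whose $(i,j)$-entry is $Hom_R(\E(M/P_j),\E(M/P_i))$; this is the standard Peirce decomposition coming from the orthogonal idempotents $e_i$ that are the projections onto the $i$-th summand.

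Next I would use Proposition \ref{8}.\textit{1}, which says $Hom_R(\E(M/P_i),\E(M/P_j))=0$ for $i\neq j$ (valid here since a progenerator is in particular projective, so that proposition applies). This forces all off-diagonal Peirce components to vanish, so the matrix ring collapses to the product of the diagonal blocks: $End_R(\widehat{M})\cong \prod_{i=1}^n End_R(\E(M/P_i))$. I should phrase this carefully: the $e_i$ are central in $End_R(\widehat{M})$ precisely because $e_i f e_j = 0$ and $e_j f e_i = 0$ for $i\neq j$ (each is a homomorphism between the relevant injective hulls), hence $f e_i = e_i f e_i = e_i f$ for every $f$, which is exactly what is needed for the ring decomposition into the product $\prod_i e_i End_R(\widehat{M}) e_i \cong \prod_i End_R(\E(M/P_i))$.

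Finally I would cite Remark \ref{9}: since $M$ is a progenerator (hence a generator in $\sigma[M]$), Proposition \ref{8}.\textit{2} gives $\E(M/P_i)=\widehat{M/P_i}$, and by Corollary 2.23 of \cite{maugoldie} applied to the prime Goldie module $M/P_i$ (which is a Goldie module by Corollary \ref{5}), $End_R(\widehat{M/P_i})$ is a simple right artinian ring that is the classical right ring of quotients of $End_R(M/P_i)$. Substituting this into the product finishes the proof.

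I expect the only genuine subtlety to be the passage from "the $(i,j)$-entries of the matrix ring vanish for $i\neq j$" to "the ring is literally a direct product", i.e. making explicit that the projection idempotents become central; everything else is a direct appeal to the cited results. One should also make sure the hypothesis "progenerator" is used in the right places — projectivity for Corollary \ref{4} and Proposition \ref{8}.\textit{1}, and the generator property for Proposition \ref{8}.\textit{2} and Remark \ref{9} — but no new argument is required beyond quoting those statements.
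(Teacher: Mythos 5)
Your proposal is correct and follows essentially the same route as the paper's own proof: decompose $\widehat{M}$ via Corollary \ref{4}.\textit{1}, use Proposition \ref{8}.\textit{1} to kill the off-diagonal homomorphisms so that $End_R(\widehat{M})$ becomes the product of the $End_R(\E(M/P_i))$, and then quote Remark \ref{9} for the simple artinian and quotient-ring identification. The only difference is that you spell out the Peirce/centrality-of-idempotents step that the paper leaves implicit, which is a harmless elaboration, not a new argument.
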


\begin{proof}
By Corollary \ref{4}, 
\[\widehat{M}\cong{\E(M/P_{1})\oplus{\E(M/P_{2})}\oplus...\oplus{\E(M/P_{n})}}\]
Then 
\[End_R(\widehat{M})\cong{End_R(\E(M/P_{1})\oplus{\E(M/P_{2})}\oplus...\oplus{\E(M/P_{n})})}\]
By Proposition \ref{8}, $Hom_R(\E(M/P_i),\E(M/P_j))=0$ if $i\neq{j}$ hence
\[End_R(\widehat{M})\cong{End_R(\E(M/P_1))\times\cdots\times{End_R(\E(M/P_n))}}\]
The rest follows by Remark \ref{9}.
\end{proof}

\begin{cor}\label{11}
Let $M$ be progenerator in $\sigma[M]$ and semiprime. Suppose $M$ is a Goldie Module and let $T=End_R(\widehat{M})$. Then there is a bijective correspondence between the set of minimal primes in $M$ and a complete set of isomorphism classes of simple $T$-Modules.
\end{cor}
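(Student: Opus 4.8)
The plan is to build the bijection from the module side to the ring side via the decomposition $\widehat{M}\cong \E(M/P_1)\oplus\cdots\oplus\E(M/P_n)$ established in Corollary \ref{4} and the ring decomposition $T=End_R(\widehat{M})\cong \prod_{i=1}^n End_R(\E(M/P_i))$ from Theorem \ref{10}. Write $T_i=End_R(\E(M/P_i))$; by Theorem \ref{10} each $T_i$ is simple right artinian, hence has, up to isomorphism, exactly one simple right $T$-module $S_i$ (the unique simple $T_i$-module, inflated along the projection $T\twoheadrightarrow T_i$). Conversely, since $T\cong T_1\times\cdots\times T_n$ is a finite product of simple artinian rings, any simple right $T$-module is annihilated by all but one factor and is thus one of $S_1,\dots,S_n$; and $S_i\not\cong S_j$ for $i\neq j$ because they have distinct annihilators in $T$ (namely $\prod_{k\neq i}T_k$ versus $\prod_{k\neq j}T_k$, which differ since each $T_k\neq 0$). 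So the assignment $P_i\mapsto S_i$ is a well-defined bijection between the minimal primes in $M$ and the isomorphism classes of simple $T$-modules.

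The key steps, in order: first invoke Corollary \ref{4} and Theorem \ref{10} to fix the identification $T\cong\prod_i T_i$ with each $T_i$ simple right artinian; second, recall the standard ring-theoretic fact that a simple right artinian ring $T_i$ (being isomorphic to a matrix ring over a division ring) has a unique simple right module up to isomorphism; third, use the product structure to show every simple right $T$-module factors through exactly one projection $T\to T_i$, giving the surjectivity of $P_i\mapsto[S_i]$ onto isomorphism classes; fourth, separate the classes via their $T$-annihilators to get injectivity. Finiteness and well-definedness are immediate from $n<\infty$, which holds by \cite[Theorem 2.2]{mauacc} since $M$ is semiprime, projective in $\sm$, and a Goldie module (so it satisfies ACC on left annihilators).

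The main obstacle — really the only nontrivial point — is making precise the passage between "$\E(M/P_i)$" and "simple $T$-module $S_i$" so that the correspondence is canonical rather than ad hoc; one clean way is to note that the $i$-th central idempotent $e_i\in T$ corresponding to the projection onto $\E(M/P_i)$ is precisely the unique one with $e_iT$ a simple artinian two-sided ideal, and $S_i$ is characterized as the simple module with $S_ie_i\neq 0$. Everything else is bookkeeping with finite products of simple artinian rings, for which the Wedderburn structure theory gives uniqueness of the simple module over each factor and mutual non-isomorphism across factors. I would present this in two short paragraphs: one establishing the two decompositions and the resulting $T\cong\prod T_i$, and one running the uniqueness/surjectivity/injectivity argument to conclude the stated bijection.
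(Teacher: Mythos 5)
Your proposal is correct and follows essentially the same route as the paper: both start from the decomposition $T\cong End_R(\E(M/P_1))\times\cdots\times End_R(\E(M/P_n))$ of Theorem \ref{10} and then match simple $T$-modules to the simple artinian factors. You simply spell out the Wedderburn bookkeeping (unique simple module per factor, annihilators separating the classes) that the paper compresses into the remark that each factor is a homogeneous component of $T$ into which any simple $T$-module embeds.
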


\begin{proof}
By Theorem \ref{10}
\[End_R(\widehat{M})\cong{End_R(\E(M/P_1))\times\cdots\times{End_R(\E(M/P_n))}}\]
Each $End_R(\E(M/P_i))$ is an Homogeneous component of $T$. If $H$ is a simple $T$-module, then $H\cong{T/L}$ where $L$ is a maximal right ideal of $T$. Then $H\hookrightarrow{End_R(\E(M/P_i))}$ for some $1\leq{i}\leq{n}$.
\end{proof}

\section{Goldie Modules and $M$-injective Modules}

\begin{prop}\label{1.1}
Let $M$ be projective in $\sigma[M]$ and semiprime. Suppose $M$ is a Goldie module with minimal primes submodules $P_1,...,P_n$ and $N_i=Ann_M(P_i)$ for $1\leq i\leq n$. If $Q\leq M$ is $M$-injective and a fully invariant submodule of $\widehat{M}$ then $Q=\bigoplus_{j\in J}\E(N_j)$ for some $J\subseteq\{1,...,n\}$.
\end{prop}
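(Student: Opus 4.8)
The plan is to exploit the decomposition $\widehat{M}=\E(N_1)\oplus\cdots\oplus\E(N_n)$ from Corollary \ref{01} together with the computation $Ann_M(\E(N_i))=P_i$ from Lemma \ref{12}. First I would observe that, since $\widehat{M}$ is projective in $\sm$ (as $M$ is), the product $K_{\widehat M}L$ is well behaved, and more importantly the summands $\E(N_i)$ are fully invariant in $\widehat M$: indeed $\E(N_i)$ is the $M$-injective hull of the fully invariant submodule $N_i$, and by Proposition \ref{8}.(\textit{1}) together with Proposition \ref{51} we have $Hom_R(\E(N_i),\E(N_j))=0$ for $i\neq j$, so any endomorphism of $\widehat M$ must send each $\E(N_i)$ into itself. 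Hence $\widehat M$ decomposes as a direct sum of the indecomposable-up-to-isotypy fully invariant blocks $\E(N_i)$, and the lattice of fully invariant submodules of $\widehat M$ that are themselves $M$-injective direct summands should be governed by subsets $J\subseteq\{1,\dots,n\}$.

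Next I would take the given $Q$: it is $M$-injective, hence an $M$-injective direct summand of $\widehat M$ (any $M$-injective submodule of $\widehat M$ is a direct summand), say $\widehat M=Q\oplus Q'$. Write $\pi_i:\widehat M\to\E(N_i)$ for the canonical projections. Since $Q$ is fully invariant in $\widehat M$, each $\pi_i(Q)\subseteq Q$, and since $\pi_i$ restricted to $\E(N_i)$ is the identity, we get $Q=\bigoplus_{i=1}^n(Q\cap\E(N_i))$. So it suffices to show that for each $i$, either $Q\cap\E(N_i)=0$ or $Q\cap\E(N_i)=\E(N_i)$. Now $Q\cap\E(N_i)$ is a fully invariant submodule of $\widehat M$ (intersection of two such), hence in particular a fully invariant submodule of $\E(N_i)$; and it is $M$-injective, being a direct summand of $Q$ intersected appropriately — more carefully, $Q\cap\E(N_i)$ is the image of $Q$ under $\pi_i$ composed with a summand projection, so it is a direct summand of the $M$-injective module $\E(N_i)$, hence $M$-injective.

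The crux is therefore: a nonzero fully invariant $M$-injective submodule $L$ of $\E(N_i)$ must be all of $\E(N_i)$. Here I would use that $\E(N_i)\cong\E(M/P_i)$ (Proposition \ref{51}) and that $M/P_i$ is a \emph{prime} Goldie module (Corollary \ref{5} plus \cite[Proposition 2.25]{maugoldie}); for a prime module the $M$-injective hull has, essentially, a unique isotype of indecomposable injective summand, and primeness forces the product $Ann_{\widehat M}(L)_{\widehat M}L=0$ to imply $Ann_{\widehat M}(L)\cap\E(N_i)=0$ unless $L\cap\E(N_i)=\E(N_i)$ — concretely, if $0\neq L\lneq\E(N_i)$ is fully invariant and $M$-injective, then $\E(N_i)=L\oplus L''$ with $L''\neq 0$ fully invariant, and $L_{\widehat M}L''\subseteq L\cap L''=0$, contradicting that $\E(N_i)$ sits over the prime module $M/P_i$ (using Lemma \ref{12}, $Ann_M(\E(N_i))=P_i$, so $L$ and $L''$ would give a nontrivial product decomposition incompatible with $P_i$ being prime in $M$). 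I expect the main obstacle to be making this last step fully rigorous: one must transfer "primeness of $M/P_i$" into a statement about fully invariant submodules of $\E(N_i)$ inside $\widehat M$, which requires care with the fact that the product $N_{\widehat M}K$ lives in $\widehat M$ and with relating $Ann$ computed in $M$ versus in $\widehat M$ — the projectivity of $\widehat M$ in $\sm$ and Lemma \ref{12} are the tools that should make this go through.
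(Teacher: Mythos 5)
Your reduction is fine as far as it goes: using $Hom_R(\E(N_i),\E(N_j))=0$ (Propositions \ref{51} and \ref{8}) each block $\E(N_i)$ is fully invariant in $\widehat{M}$, so a fully invariant $Q$ splits as $Q=\bigoplus_i\bigl(Q\cap\E(N_i)\bigr)$ with each piece $M$-injective, and everything hinges on the claim that a nonzero fully invariant $M$-injective submodule $L$ of $\E(N_i)$ must equal $\E(N_i)$. It is precisely this crux that your argument does not establish. Writing $\E(N_i)=L\oplus L''$, you assert that $L''$ is fully invariant; this is unjustified, and in the situation at hand it is in fact false whenever $0\neq L\neq\E(N_i)$: since $\E(N_i)\cong E_i^{k_i}$ is isotypic, both $L$ and $L''$ are direct sums of copies of the same indecomposable $E_i$, so there are plenty of nonzero maps $L\to L''$ and neither proper summand can be fully invariant. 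Moreover, the intended contradiction via primeness does not transfer: primeness of $M/P_i$ is a statement about products of fully invariant submodules of $M/P_i$ computed in $M/P_i$, whereas your product $L_{\widehat{M}}L''$ is computed in $\widehat{M}$, and neither Lemma \ref{12} nor projectivity of $M$ gives you that $\E(N_i)$ (or $\widehat{M}$) is a prime module. You flag this transfer as the main obstacle, and it is: as written, the key step is a gap, not a proof.

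The repair is exactly the paper's argument, and it makes the primeness detour unnecessary. Following \cite[Theorem 2.20]{mauacc}, $\widehat{M}=E_1^{k_1}\oplus\cdots\oplus E_n^{k_n}$ with $E_i$ indecomposable injective and $\E(N_i)\cong E_i^{k_i}$; since $Q$ is $M$-injective of finite uniform dimension, Krull--Remak--Schmidt--Azumaya gives $Q\cong E_1^{l_1}\oplus\cdots\oplus E_n^{l_n}$ with $0\leq l_i\leq k_i$, and if $1\leq l_i<k_i$ one takes an isomorphism from a copy of $E_i$ inside $Q$ onto a copy outside $Q$ and extends it (via the summand projections and inclusions) to an endomorphism of $\widehat{M}$ that does not preserve $Q$, contradicting full invariance; hence each $l_i$ is $0$ or $k_i$ and $Q=\bigoplus_{j\in J}\E(N_j)$. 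In your block-by-block setup the same reasoning shows directly that $L\cong E_i^{l}$ with $0<l<k_i$ is impossible, which is the statement your primeness argument was aiming for.
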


\begin{proof}
Following the proof of \cite[Theorem 2.20]{mauacc} there exist $E_1,...,E_n$ indecomposable injective modules in $\sigma[M]$ such that $\E(N_i)\cong E_i^{k_i}$ and $\widehat{M}=E_1^{k_1}\oplus\cdots\oplus E_n^{k_n}$. Since $Q$ is injective and has finite uniform dimension, by Krull-Remak-Schmidt-Azumaya $Q=E_1^{l_1}\oplus\cdots\oplus E_n^{l_n}$ with $0\leq l_i\leq k_i$, where if $l_i=0$ we define $E_i^{0}=0$. If $1\leq l_i< k_i$ then $Q$ is not fully invariant in $\widehat{M}$. So if $l_i\neq 0$ then $l_i=k_i$. Thus $Q=\bigoplus_{j\in J}\E(N_j)$ for some $J\subseteq\{1,...,n\}$.
\end{proof}

\begin{prop}\label{1.2}
Let $M$ be projective in $\sigma[M]$, non $M$-singular and with finite uniform dimension. If $A\in\sigma[M]$ is non $M$-singular and quasi-injective with $Ann_M(A)=0$ then $A$ is $M$-injective.
\end{prop}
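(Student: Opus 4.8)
The plan is to exploit the hypotheses to show that the inclusion $A \hookrightarrow \widehat{A}$ (the $M$-injective hull of $A$ taken in $\sigma[M]$) is actually an equality, by proving that $\widehat{A}/A$ would have to be $M$-singular while simultaneously being a submodule-related quotient forced to be $0$. First I would recall the standard fact (from Wisbauer) that for any $A \in \sigma[M]$ the quotient $\widehat{A}/A$ is $M$-singular, since $A \leq_e \widehat{A}$; so if I can show $\widehat{A}/A$ is non $M$-singular, or that $A$ is already closed under the relevant endomorphisms of $\widehat{A}$, then $A = \widehat{A}$ and we are done. The natural route is to use quasi-injectivity: $A$ quasi-injective means $A$ is a fully invariant submodule of $\widehat{A}$, i.e. $f(A) \subseteq A$ for every $f \in \mathrm{End}_R(\widehat{A})$. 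Combined with $\widehat{A}$ being $M$-injective, it suffices to show that $\widehat{A}$ is a homomorphic image structure controlled by $A$, but more directly: it suffices to show the natural map $\mathrm{End}_R(\widehat A)\to \mathrm{End}_R(\widehat A)$ restricted suitably forces $\widehat A \subseteq \sigma[A]$-closure inside $A$.

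The cleaner approach I would actually push: since $M$ is non $M$-singular and has finite uniform dimension, $\widehat{M}$ decomposes as a finite direct sum of indecomposable $M$-injective modules, and $\mathrm{End}_R(\widehat M)$ is semisimple (this is the non-singular-finite-dimensional analogue of Goldie's theorem at the module level; it follows from the results cited earlier, e.g.\ that $\widehat M$ is then a finite direct sum of indecomposables each with local endomorphism ring and the whole endomorphism ring right self-injective and von Neumann regular, hence semisimple when the dimension is finite). Then $A \in \sigma[M]$ non $M$-singular embeds essentially in a finite direct sum of indecomposable summands of $\widehat M^{(\Lambda)}$; the condition $Ann_M(A)=0$ says that $A$ is ``faithful'' with respect to $M$, which should force $\widehat A$ to contain a copy of every indecomposable type appearing in $\widehat M$, but more importantly it pins down $\widehat A$ as a direct summand of $\widehat M^k$ for suitable $k$ via the trace/evaluation argument. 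Concretely: $Ann_M(A)=0$ means $\bigcap\{\ker f : f\in \Hom(M,A)\}=0$, so the canonical map $M \to A^{\Hom(M,A)}$ is a monomorphism; taking $M$-injective hulls and using that $\widehat M$ is $M$-injective, $\widehat M$ is (isomorphic to) a direct summand of $\widehat A^{\,t}$ for some finite $t$ by finite uniform dimension.

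Now I would combine the two: $A$ is quasi-injective and non $M$-singular, so $\widehat A$ is $M$-injective and $A$ is fully invariant in $\widehat A$; and $\widehat M$ is a direct summand of $\widehat A^t$. It suffices to prove $A = \widehat A$. Consider $x \in \widehat A$; I want $x \in A$. Because $A \leq_e \widehat A$, there is a left ideal-type subset, or rather a submodule situation, giving $x$ ``almost'' in $A$; the obstruction $\widehat A / A$ is $M$-singular. On the other hand, $\widehat A$ is non $M$-singular: indeed $\widehat A \in \sigma[M]$ is an $M$-injective essential extension of the non $M$-singular module $A$, and an essential extension of a non $M$-singular module inside $\sigma[M]$ is non $M$-singular (if $\mathcal Z(\widehat A)\neq 0$ it meets $A$ essentially, contradicting $\mathcal Z(A)=0$, since $\mathcal Z(\widehat A)\cap A \subseteq \mathcal Z(A)$). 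Therefore $\widehat A/A$ is both $M$-singular and a quotient of the non $M$-singular module $\widehat A$ --- that alone is not a contradiction --- so the real finish is: being quasi-injective, $A$ has no proper essential extension in the sense that matters, namely $A$ is a direct summand of $\widehat A$ (quasi-injective modules are exactly the fully invariant submodules of their injective hull, and a fully invariant submodule $A$ of $\widehat A$ with $A \leq_e \widehat A$ must equal $\widehat A$, because $\widehat A = A \oplus A'$ as $\widehat A$-module decomposition would need $A' = 0$ by essentiality --- wait, full invariance does not immediately give a complement).

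The honest main step, and the one I expect to be the obstacle, is precisely showing $A \leq_e \widehat A$ together with $A$ fully invariant in $\widehat A$ forces $A = \widehat A$, which is \emph{false} in general (quasi-injective $\neq$ injective); so the genuine content must use $Ann_M(A)=0$ and non-singularity of $M$ to rule out proper quasi-injective essential extensions. I would therefore argue: write $\widehat M = E_1^{k_1}\oplus\cdots\oplus E_n^{k_n}$ with $E_i$ indecomposable $M$-injective, pairwise non-isomorphic. Since $\widehat M$ is a direct summand of $\widehat A^t$, every $E_i$ is isomorphic to a summand of $\widehat A$, so $\widehat A \cong E_1^{m_1}\oplus\cdots\oplus E_n^{m_n}$ with all $m_i \geq 1$ (using Krull--Remak--Schmidt--Azumaya, valid as each $E_i$ has local endomorphism ring). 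Now $A$, being fully invariant in $\widehat A = \bigoplus E_i^{m_i}$ and non $M$-singular with the same $M$-injective hull, must (by an argument parallel to Proposition \ref{1.1}) contain $E_i^{m_i}$ whenever it contains any nonzero part of that homogeneous component; since $A$ is essential it contains a nonzero part of each, hence $A \supseteq \bigoplus E_i^{m_i} = \widehat A$, giving $A = \widehat A$, i.e.\ $A$ is $M$-injective. So the plan's crux is the Proposition \ref{1.1}-style rigidity: a fully invariant, essential, non $M$-singular submodule of a finite direct sum of pairwise non-isomorphic indecomposable $M$-injectives is everything --- and that rigidity is exactly where non-singularity is used to guarantee that ``partial'' homogeneous components cannot be fully invariant.
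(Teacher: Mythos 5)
Your proposal departs from the paper's proof and, as written, has a genuine gap at its crux. The paper's argument is short and you essentially brush past it: because $A$ is non $M$-singular, every kernel of a map $M\to A$, and every intersection of such kernels, is a \emph{closed} submodule of $M$; finite uniform dimension gives DCC on closed submodules, so from $Ann_M(A)=\bigcap\{Ker(f)\mid f\in\Hom(M,A)\}=0$ one obtains finitely many $f_1,\dots,f_n$ with $\bigcap_{i=1}^n Ker(f_i)=0$, i.e.\ a monomorphism $M\hookrightarrow A^n$; then quasi-injectivity finishes at once, since $A$ is $A$-injective, hence $A^n$-injective, hence $M$-injective for the submodule $M$ of $A^n$. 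Your step ``$\widehat M$ is a direct summand of $\widehat A^{\,t}$ for some finite $t$ by finite uniform dimension'' is exactly this reduction, but finite uniform dimension alone does not give it (for $M=\mathbb{Z}$ and $A=\bigoplus_p\mathbb{Z}_p$ one has $Ann_M(A)=0$ and uniform dimension one, yet $M$ embeds in no finite power of $A$); what makes it work is the non $M$-singularity of $A$, via closed kernels and DCC on closed submodules, which you never invoke at that point. Moreover, once $M\hookrightarrow A^t$ is available, the quasi-injectivity of $A$ already yields the conclusion, so the entire second half of your proposal is both unnecessary and, as explained next, unsound.

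The second half is where the real gap lies. First, you write $\widehat A\cong E_1^{m_1}\oplus\cdots\oplus E_n^{m_n}$ with finite multiplicities, but nothing in the hypotheses bounds the uniform dimension of $A$, so this finite decomposition is unjustified. Second, the ``rigidity'' you rely on --- a fully invariant, essential, non $M$-singular submodule of $\bigoplus E_i^{m_i}$ meeting every homogeneous component must be everything --- is not obtained by ``an argument parallel to Proposition \ref{1.1}'': the proof of Proposition \ref{1.1} applies Krull--Remak--Schmidt--Azumaya to $Q$ itself and therefore uses that $Q$ is already $M$-injective, which is precisely what you are trying to prove about $A$. Nor does the rigidity follow formally from full invariance: since all nonzero maps between non-isomorphic non $M$-singular indecomposable injectives are zero (a nonzero map is monic, as an essential kernel would force an $M$-singular image, and its image is an injective direct summand), if some $E_1$ had a proper nonzero fully invariant submodule $N_1$, then $N_1\oplus E_2\oplus\cdots\oplus E_n$ would be fully invariant, essential, non $M$-singular, and would meet every homogeneous component while being proper. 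So your step silently assumes each $E_i$ is FI-simple (or at least generated by any nonzero fully invariant piece), a statement the paper only reaches much later (Corollary \ref{1.17}) under stronger hypotheses; in effect the argument begs the question, and the hypothesis $Ann_M(A)=0$ must be used as in the paper --- to embed $M$ into a finite power of $A$ --- not merely to guarantee that every indecomposable type occurs in $\widehat A$.
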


\begin{proof}
Note that if $X\subseteq Hom_R(M,A)$ then $\bigcap_{f\in X}Ker(f)$ is closed in $M$ because $A$ is non $M$-singular. Consider $\Gamma=\{N\leq M|N=\bigcap_{f\in X}ker(f)\;X\subseteq Hom_R(M,A)\}$. Since $M$ has finite uniform dimension, by \cite[Proposition 6.30]{lamlectures} $M$ satisfies DCC on closed submodules. So $\Gamma$ has minimal elements. 

We have that $0=Ann_M(A)\in\Gamma$, hence there exist $f_1,...,f_n\in Hom_R(M,A)$ such that $0=\bigcap_{i=1}^nKer(f_i)$. Thus there exists a monomorphism $\varphi:M\to A^n$. Since $A^n$ is quasi-injective and $\varphi$ is a monomorphism then $A$ is $M$-injective.
\end{proof}

\begin{prop}\label{1.3}
Let $M$ be projective in $\sigma[M]$ and non $M$-singular. If $L\in \sigma[M]$ is non $M$-singular then $Ann_{\widehat{M}}(L)$ is $M$-injective.
\end{prop}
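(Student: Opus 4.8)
The plan is to show that $Ann_{\widehat{M}}(L)$ coincides with a submodule that is manifestly $M$-injective, namely a direct summand of $\widehat{M}$. First I would recall that $Ann_{\widehat{M}}(L) = \bigcap\{Ker(g) \mid g \in Hom_R(\widehat{M},L)\}$. Since $M$ is non $M$-singular, so is $\widehat{M}$ (and every submodule thereof), and hence every kernel $Ker(g)$ with $g\colon\widehat{M}\to L$ and $L$ non $M$-singular is a closed submodule of $\widehat{M}$: indeed $\widehat{M}/Ker(g)$ embeds in the non $M$-singular module $L$, so $Ker(g)$ must be closed. An arbitrary intersection of closed submodules need not be closed in general, but here I would exploit that $\widehat{M}$ is $M$-injective. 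The key point is that a closed submodule $C$ of an $M$-injective module $\widehat{M}$ is itself $M$-injective: take the $M$-injective hull $\widehat{C}$ of $C$ inside $\widehat{M}$; then $C\leq_e\widehat{C}$, but $C$ is closed, forcing $C=\widehat{C}$, so $C$ is $M$-injective and therefore a direct summand of $\widehat{M}$.

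Next I would handle the intersection itself. Since $\widehat M$ has finite uniform dimension (being the injective hull of $M$, which has finite uniform dimension by hypothesis), by \cite[Proposition 6.30]{lamlectures} $\widehat{M}$ satisfies DCC on closed submodules. Therefore the family $\{\bigcap_{g\in X}Ker(g)\mid X\subseteq Hom_R(\widehat{M},L)\}$, all of whose members are closed by the previous paragraph, has a minimal element; that minimal element is realized by a \emph{finite} subfamily, say $K = \bigcap_{i=1}^{k}Ker(g_i)$, and by minimality $K = Ann_{\widehat{M}}(L)$. So $Ann_{\widehat{M}}(L)$ is a finite intersection of closed submodules of $\widehat M$, hence a finite intersection of direct summands.

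It remains to see that such a $K$ is $M$-injective. For this I would argue that $\widehat{M}/K$ embeds into $L^{k}$ via $x\mapsto(g_1(x),\dots,g_k(x))$; thus $K$ is closed in $\widehat M$ (being the kernel of a map into the non $M$-singular module $L^k$), and by the first paragraph every closed submodule of the $M$-injective module $\widehat M$ is $M$-injective. This gives the conclusion. The main obstacle I anticipate is the reduction from an arbitrary intersection to a finite one: this is exactly where projectivity of $M$ in $\sm$ (so that $\widehat M$ is genuinely the injective object and closed-submodule arguments behave well) together with finite uniform dimension and DCC on closed submodules must be combined carefully, in the same spirit as the proof of Proposition \ref{1.2}. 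Once the intersection is finite, identifying $K$ as a kernel of a map into $L^k$ and invoking closedness is routine.
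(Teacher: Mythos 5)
Your overall idea---each kernel of a map into the non $M$-singular module $L$ is closed, and a closed submodule of the $M$-injective module $\widehat{M}$ is itself $M$-injective---is exactly the right one, and it is the content of the paper's (one-line) proof. But your execution has a genuine gap: to pass from single kernels to the full intersection you invoke ``$\widehat{M}$ has finite uniform dimension, being the injective hull of $M$, which has finite uniform dimension by hypothesis.'' Proposition \ref{1.3} makes no such hypothesis; unlike Proposition \ref{1.2}, it only assumes $M$ projective in $\sigma[M]$ and non $M$-singular. So the appeal to \cite[Proposition 6.30]{lamlectures} and DCC on closed submodules is not available, and with it collapses your reduction of $Ann_{\widehat{M}}(L)$ to a finite intersection $\bigcap_{i=1}^{k}Ker(g_i)$. (There is also a smaller internal slip: you concede that arbitrary intersections of closed submodules need not be closed, yet a sentence later you assert that every member $\bigcap_{g\in X}Ker(g)$ of your family is closed ``by the previous paragraph,'' which only treated single kernels; to run the DCC argument one should restrict to finite subsets $X$.)

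The finiteness detour is in fact unnecessary, and removing it repairs the proof without any extra hypotheses: the whole submodule $K=Ann_{\widehat{M}}(L)$ is closed in $\widehat{M}$ directly. Indeed, suppose $K\leq_e K'\leq\widehat{M}$ and let $g\in Hom_R(\widehat{M},L)$. Since $K\leq K'\cap Ker(g)$ and $K\leq_e K'$, we get $K'\cap Ker(g)\leq_e K'$, so $g(K')\cong K'/(K'\cap Ker(g))$ is $M$-singular; being a submodule of the non $M$-singular module $L$, it is zero. Hence $K'\leq\bigcap_{g}Ker(g)=K$, so $K$ has no proper essential extension inside $\widehat{M}$, i.e.\ $K$ is closed, and then your own first-paragraph argument (the $M$-injective hull of $K$ taken inside $\widehat{M}$ must equal $K$) shows $K$ is $M$-injective. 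This is precisely the paper's argument; your version, as written, only proves the statement under an added finite uniform dimension assumption.
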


\begin{proof}
Let $K=Ann_{\widehat{M}}(L)$. Note that $K$ is closed, so it is $M$-injective.
\end{proof}

\begin{prop}\label{1.4}
Let $M$ be projective in $\sigma[M]$ and semiprime. Suppose $M$ is a Goldie module and $P_1,...,P_n$ are its minimal primes. If $N=\bigcap_{j\in J}P_j$ with $\emptyset\neq J\subseteq\{1,...,n\}$ then $M/N$ is a semiprime Goldie module.
\end{prop}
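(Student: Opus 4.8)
The plan is to realize $M/N$ as an essential submodule (in fact, a summand up to isomorphism) of a direct sum of the prime Goldie modules $M/P_j$ with $j \in J$, and then invoke the closure properties of Goldie modules already established. First I would apply the morphism $\Psi$ of Proposition \ref{3}, but restricted to the index set $J$: define $\Psi_J : M \to \bigoplus_{j\in J} M/P_j$ by $\Psi_J(m) = (m+P_j)_{j\in J}$. Its kernel is exactly $\bigcap_{j\in J} P_j = N$, so $\Psi_J$ induces a monomorphism $\overline{\Psi_J}: M/N \hookrightarrow \bigoplus_{j\in J} M/P_j$. The key point is that this image is essential: mimicking the argument in Proposition \ref{3}, using that $\bigoplus_{j\in J}\big(P_j + \bigcap_{i\in J, i\neq j} P_i\big)$ is essential in $\big(\bigoplus_{j\in J} M/P_j\big)$-valued coordinates — this requires the analogue of Lemma \ref{2} for the sub-family $\{P_j\}_{j\in J}$, which follows because the $P_j$ for $j \in J$ together with $N = \bigcap_{j\in J}P_j$ still satisfy the pseudocomplement relations of Lemmas \ref{01} and \ref{02} after passing to $M/N$ (or, more cleanly, one checks directly that $N$ is semiprime in $M$ with the $P_j$, $j\in J$, as its minimal primes, so $M/N$ inherits the setup).

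From here the argument splits into verifying three things about $M/N$. (i) $M/N$ is semiprime: since $N = \bigcap_{j\in J}P_j$ is an intersection of prime-in-$M$ submodules, it is a semiprime submodule of $M$, hence $M/N$ is a semiprime module (this is the standard fact that an intersection of primes is semiprime, available from the definitions quoted in the preliminaries). (ii) $M/N$ is projective in $\sigma[M/N]$: one needs $\sigma[M/N] \subseteq \sigma[M]$ (clear) together with the fact that $M/N$ is a quotient of $M$ by a fully invariant submodule and $M$ is projective in $\sigma[M]$ — for semiprime $N$ this projectivity transfers, and in fact $M/N$ being semiprime with $M$ projective in $\sigma[M]$ is precisely the hypothesis shape under which \cite[Proposition 2.25]{maugoldie} and Corollary \ref{5} operate. (iii) $M/N$ has finite uniform dimension: immediate from the essential monomorphism $M/N \hookrightarrow \bigoplus_{j\in J} M/P_j$ of the previous paragraph, since each $M/P_j$ has finite uniform dimension by Corollary \ref{4}.\textit{2}.

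Finally I would assemble the conclusion: $M/N$ is semiprime and projective in $\sigma[M/N]$ with finitely many minimal prime submodules, namely the images of the $P_j$ for $j \in J$ (these are the minimal primes of $M/N$ because they correspond to the associated minimal primes of the essential overmodule $\bigoplus_{j\in J} M/P_j$, each $M/P_j$ being prime Goldie by \cite[Proposition 2.25]{maugoldie}). Each quotient $(M/N)\big/(P_j/N) \cong M/P_j$ is a Goldie module. By Corollary \ref{5} applied to $M/N$, the module $M/N$ is therefore a Goldie module, and being a quotient by a semiprime submodule it is a semiprime Goldie module, as claimed.

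I expect the main obstacle to be step (ii): carefully justifying that $M/N$ is projective in $\sigma[M/N]$ and that the $P_j/N$ are exactly its minimal primes, so that Corollary \ref{5} genuinely applies. The uniform-dimension and semiprimeness parts are routine, but the transfer of projectivity to the subcategory $\sigma[M/N]$ and the identification of minimal primes after the quotient is where one must be precise about fully invariant submodules and the product $(-)_{M/N}(-)$ versus $(-)_M(-)$.
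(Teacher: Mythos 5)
Your argument is correct in substance but takes a genuinely different route from the paper's proof. You reduce to the prime quotients: from the monomorphism $M/N\hookrightarrow\bigoplus_{j\in J}M/P_j$ (kernel $N=\bigcap_{j\in J}P_j$) you identify the minimal primes of $M/N$ as the $P_j/N$, observe that each $(M/N)/(P_j/N)\cong M/P_j$ is a (prime) Goldie module, and apply Corollary \ref{5} to $M/N$. The paper never identifies the minimal primes of $M/N$ at this point; it uses the same monomorphism only to transport two properties, namely finite uniform dimension (Corollary \ref{4}.\textit{2}) and non-$M$-singularity of each $M/P_j$ (from the proof of \cite[Proposition 2.25]{maugoldie}), concludes that $M/N$ is semiprime, projective in $\sigma[M/N]$, non $M/N$-singular and of finite uniform dimension, and then invokes the characterization \cite[Theorem 2.8]{maugoldie}. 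That route deliberately bypasses the two points you flag as delicate: the identification $\{P_j/N\}_{j\in J}$ of the minimal primes of $M/N$ (which the paper records only afterwards, in Remark \ref{1.5}, via \cite[Proposition 18]{raggiprime}) and any comparison of the products $(-)_M(-)$ versus $(-)_{M/N}(-)$. Your route yields the extra information about the minimal primes of $M/N$, but to be complete you must actually prove that identification (primes of $M/N$ correspond to primes of $M$ containing $N$, and a prime containing $\bigcap_{j\in J}P_j$ contains some $P_j$ because the product of the $P_j$ lies in their intersection and the product is associative when $M$ is projective in $\sigma[M]$). Two smaller remarks: the essentiality of the image of $\Psi_J$ is never used in your argument, since the bare monomorphism already gives finite uniform dimension, so the appeal to an analogue of Lemma \ref{2} can be dropped; and the projectivity of $M/N$ in $\sigma[M/N]$ is exactly the standard fact, asserted in one line by the paper, that a quotient of $M$ by a fully invariant submodule is projective in its own subcategory.
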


\begin{proof}
It is clear $M/N$ is a semiprime module. Since $N$ is fully invariant in $M$ then $M/N$ is projective in $\sigma[M/N]$. We have that $M$ is a semiprime Goldie module, so $M/P_i$ does for every $1\leq i\leq n$. 

There exists a monomorphism $M/N\hookrightarrow\bigoplus_{j\in J}M/P_j$. By the proof of \cite[Proposition 2.25]{maugoldie}, every factor module $M/P_j$ is non $M$-singular, hence $M/N$ is non $M$-singular. It follows $M/N$ is non $M/N$-singular. By Corollary \ref{4}.\textit{2} $M/P_j$ has finite uniform dimension for every $j\in J$, so $M/N$ has finite uniform dimension. Thus $M/N$ is a semiprime Goldie module by \cite[Theorem 2.8]{maugoldie}.
\end{proof}

\begin{rem}\label{1.5}
Let $N\leq M$ a fully invariant submodule of $M$ with $M$ projective in $\sigma[M]$. Let $P$ be a prime in $M$ such that $N\leq P$. By \cite[Proposition 18]{raggiprime}, $P/N\leq M/N$ is prime in $M/N$ if and only if $P$ is prime in $M$. So, $P/N$ is a minimal prime in $M/N$ if and only if $P$ is a minimal prime in $M$. 

Then, in the conditions of last Proposition, the minimal prime in $M/N$ submodules are $\{P_j/N|j\in J\}$. 
\end{rem}

\begin{cor}\label{1.6}
Let $M$ be projective in $\sigma[M]$. Suppose $M$ is a  semiprime Goldie module. Let $K$ be a fully invariant submodule of $\widehat{M}$ and assume $K$ is $M$-injective. If $N=K\cap M$ then $M/N$ is a semiprime Goldie module.  
\end{cor}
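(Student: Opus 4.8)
The plan is to first pin down the shape of $K$ as a submodule of $\widehat{M}$, and then to identify $N=K\cap M$ as an intersection of minimal prime in $M$ submodules, so that Proposition \ref{1.4} applies directly.

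First I would note that, although Proposition \ref{1.1} is phrased for an $M$-injective fully invariant submodule of $\widehat{M}$ that is contained in $M$, its proof uses this inclusion only to guarantee that the submodule has finite uniform dimension. Here $M$ is a Goldie module, so $\widehat{M}$ has finite uniform dimension and hence so does its submodule $K$. Thus the Krull--Remak--Schmidt--Azumaya argument in the proof of Proposition \ref{1.1}, applied verbatim to $K$ (which is $M$-injective and fully invariant in $\widehat{M}$), gives $K=\bigoplus_{j\in J}\E(N_j)$ for some $J\subseteq\{1,\dots,n\}$, where $N_i=Ann_M(P_i)$.

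Next I would compute $N=K\cap M$. By Corollary \ref{01} we have $\widehat{M}=\E(N_1)\oplus\cdots\oplus\E(N_n)$, and using the uniqueness of the components in this direct sum one checks that $\bigcap_{i\notin J}\bigl(\bigoplus_{j\neq i}\E(N_j)\bigr)=\bigoplus_{j\in J}\E(N_j)$, since an element lies in the left-hand side precisely when all of its components indexed by the $i\notin J$ vanish. Intersecting with $M$ and invoking Lemma \ref{21}, which states $P_i=M\cap\bigoplus_{j\neq i}\E(N_j)$, yields
\[N=K\cap M=\Bigl(\bigoplus_{j\in J}\E(N_j)\Bigr)\cap M=\bigcap_{i\notin J}\Bigl(M\cap\bigoplus_{j\neq i}\E(N_j)\Bigr)=\bigcap_{i\notin J}P_i.\]

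Finally, if $J=\{1,\dots,n\}$ then $N=M$ and $M/N=0$ is trivially a semiprime Goldie module; otherwise $\{1,\dots,n\}\setminus J$ is a nonempty set of indices and $N=\bigcap_{i\notin J}P_i$ is an intersection of minimal prime in $M$ submodules, so Proposition \ref{1.4} gives that $M/N$ is a semiprime Goldie module. The only step requiring genuine care is the first one --- confirming that the decomposition argument of Proposition \ref{1.1} does not actually use $K\subseteq M$, only the $M$-injectivity, the full invariance in $\widehat{M}$, and finite uniform dimension --- together with the routine bookkeeping of components in the decomposition $\widehat{M}=\bigoplus_i\E(N_i)$; everything else is immediate from the cited results.
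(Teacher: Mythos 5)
Your proposal is correct and follows essentially the same route as the paper: apply Proposition \ref{1.1} to write $K=\bigoplus_{j\in J}\E(N_j)$, identify $N=K\cap M=\bigcap_{i\notin J}P_i$ via Lemma \ref{21}, and conclude with Proposition \ref{1.4}. Your extra care in checking that the argument of Proposition \ref{1.1} applies to $K\leq\widehat{M}$ not necessarily contained in $M$ (using only finite uniform dimension of $\widehat{M}$), and your handling of the degenerate case $K=\widehat{M}$, are points the paper passes over silently, but they do not change the approach.
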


\begin{proof}
Let $P_1,...,P_n$ be the minimal prime in $M$ submodules. Let us denote $N_i=Ann_M(P_i)$ with $1\leq i\leq n$. By Proposition \ref{1.1} $K=\bigoplus_{j\in J}\E(N_j)$ for some $J\subseteq\{1,...,n\}$. Thus $N=\bigcap_{i\notin J}P_i$ by Lemma \ref{21}. By Proposition \ref{1.4} $M/N$ is a semiprime Goldie module. 
\end{proof}

\begin{prop}\label{1.7}
Let $M$ be progenerator in $\sigma[M]$. Suppose $M$ is a  semiprime Goldie module with minimal primes $P_1,...,P_n$. Let $K$ be a fully invariant submodule of $\widehat{M}$ and assume $K$ is $M$-injective. If $N=K\cap M$ then there exists $\emptyset\neq J\subseteq\{1,...,n\}$ such that the classical right quotients ring of $End_R(M/N)$ is isomorphic to 
\[\prod_{i\notin J}End_R(\E(M/P_i))\] 
\end{prop}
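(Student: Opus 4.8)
The plan is to combine Corollary \ref{1.6}, which identifies $M/N$ as a semiprime Goldie module, with the structural description of endomorphism rings of $M$-injective hulls from Theorem \ref{10}, applied this time to $M/N$ rather than to $M$. First I would invoke Proposition \ref{1.1} to write $K=\bigoplus_{j\in J}\E(N_j)$ for some $J\subseteq\{1,\dots,n\}$ (here $N_i=Ann_M(P_i)$), so that by Lemma \ref{21} we get $N=K\cap M=\bigcap_{i\notin J}P_i$. Set $J'=\{1,\dots,n\}\setminus J$, which is nonempty since otherwise $N=M$ (using $\bigcap_{i=1}^n P_i=0$ would force a degenerate case; one should note $K$ proper, or rather that $N\neq M$ so $J'\neq\emptyset$). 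By Remark \ref{1.5}, the minimal prime in $M/N$ submodules are exactly $\{P_i/N\mid i\in J'\}$, and by Corollary \ref{1.6} (together with Proposition \ref{1.4}) $M/N$ is a semiprime Goldie module.

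Next I would apply Theorem \ref{10} to the module $M/N$ in place of $M$. For this I must check the hypotheses of that theorem: $M/N$ must be a progenerator in $\sigma[M/N]$. Projectivity of $M/N$ in $\sigma[M/N]$ holds because $N$ is fully invariant in $M$ and $M$ is projective in $\sigma[M]$ (as already used in Proposition \ref{1.4}); that $M/N$ is a generator in $\sigma[M/N]$ when $M$ is a generator in $\sigma[M]$ is the point that needs a short argument, but it follows from the standard fact that quotients by fully invariant submodules of a self-generator remain self-generators in the corresponding subcategory. Granting this, Theorem \ref{10} gives
\[
End_R\bigl(\widehat{M/N}\bigr)\cong\prod_{i\in J'}End_R\bigl(\E(\, (M/N)/(P_i/N)\,)\bigr),
\]
and since $(M/N)/(P_i/N)\cong M/P_i$, the right-hand side is $\prod_{i\in J'}End_R(E^{[M/N]}(M/P_i))$, with each factor a simple right artinian ring that is the classical right quotient ring of $End_R(M/P_i)$.

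It then remains to identify $End_R(\widehat{M/N})$ with the classical right quotient ring of $End_R(M/N)$, and to match $E^{[M/N]}(M/P_i)$ with $\E(M/P_i)$. The first point is exactly Remark \ref{9} / Corollary 2.23 of \cite{maugoldie} applied to the Goldie module $M/N$: for a semiprime Goldie module $A$ that is a progenerator in $\sigma[A]$, $End_R(\widehat{A})$ is the classical right ring of quotients of $End_R(A)$. The second point reduces to showing $E^{[M/N]}(M/P_i)\cong\E(M/P_i)$, which follows from Proposition \ref{8}.\textit{2}: applied inside $\sigma[M/N]$ (where $M/N$ is a semiprime Goldie progenerator with minimal primes $P_i/N$), it yields $E^{[M/N]}(M/P_i)=\widehat{M/P_i}$, and Proposition \ref{8}.\textit{2} applied in $\sigma[M]$ gives $\E(M/P_i)=\widehat{M/P_i}$ as well, so the two hulls agree. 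Combining these identifications, the classical right quotient ring of $End_R(M/N)$ is isomorphic to $\prod_{i\notin J}End_R(\E(M/P_i))$, which is the claim.

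The main obstacle I anticipate is bookkeeping rather than depth: one must be careful that the index set appearing in the statement is the complement $J'=\{1,\dots,n\}\setminus J$ of the set $J$ indexing the summands of $K$ (the summands of $K$ are precisely the $\E(N_j)$ that die in $M/N$, while the surviving minimal primes are the remaining ones), and one must verify the progenerator hypothesis for $M/N$ so that Theorem \ref{10} and Remark \ref{9} genuinely apply. Once those two points are pinned down, the proof is a direct concatenation of Proposition \ref{1.1}, Lemma \ref{21}, Remark \ref{1.5}, Corollary \ref{1.6}, Proposition \ref{8}, and Theorem \ref{10}.
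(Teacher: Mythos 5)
Your proposal is correct in outline and follows the same skeleton as the paper's proof: reduce to $N=\bigcap_{i\notin J}P_i$ via Proposition \ref{1.1}, Lemma \ref{21} and Corollary \ref{1.6}, note via Remark \ref{1.5} that the minimal primes of $M/N$ are the $P_i/N$ with $i\notin J$, decompose $\widehat{M/N}$, and then identify endomorphism rings. It diverges at two points, one of which carries a genuine proof obligation. The paper does not apply Theorem \ref{10} to $M/N$; it re-runs the needed steps for $M/N$ using only its projectivity in $\sigma[M/N]$ (Corollary \ref{4}, the Hom-vanishing of Proposition \ref{8}.\textit{1}, and [maugoldie, Theorem 2.22] for the quotient-ring statement), and it identifies $E^{[M/N]}(M/P_i)$ with $\E(M/P_i)$ by an annihilator computation: $Ann_M(\E(M/P_i))=P_i$ by Proposition \ref{51} and Lemma \ref{12}, hence $N_M\E(M/N)=0$, hence $\E(M/N)\in\sigma[M/N]$ by [PepeFbn, Proposition 1.5]. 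Your route, applying Theorem \ref{10} and Proposition \ref{8}.\textit{2} to $M/N$ and matching both hulls with $\widehat{M/P_i}$, is arguably slicker, but it hinges on $M/N$ being a \emph{progenerator} in $\sigma[M/N]$, which you only assert as a ``standard fact.'' That claim is true under the paper's hypotheses, but you should supply the short argument: for $X\in\sigma[M/N]$ one has $N_MX=0$ (because $N_M(M/N)=0$, and for $M$ projective in $\sm$ the product $N_M(-)$ respects direct sums, epimorphic images and submodules), so every $f\colon M\to X$ kills $N$ and factors through $M/N$; since $M$ generates $X$, so does $M/N$. The paper's annihilator route avoids having to state this. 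Two smaller points: the fact that $End_R(\widehat{M/N})$ is the classical right quotient ring of $End_R(M/N)$ should be cited from [maugoldie, Theorem 2.22] (Corollary 2.23 is the prime case used for the individual factors), and your index bookkeeping ($J$ indexing the summands of $K$, the product taken over its complement) agrees with the paper's. With the generator lemma supplied, your argument is complete.
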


\begin{proof}
By Corollary \ref{1.6} there exist $\emptyset\neq J\subseteq\{1,...,n\}$ such that $N=\bigcap_{i\notin J}P_i$ and $M/N$ is a semiprime Goldie module. By Corollary \ref{4} and Remark \ref{1.5} we have that
\[\widehat{M/N}\cong \bigoplus_{i\notin J} E^{[M/N]}(M/P_i)\]
and 
\[Hom_R(E^{[M/N]}(M/P_i),E^{[M/N]}(M/P_l))=0\]
if $i\neq l$. Hence
\[End_R(\widehat{[M/N]})\cong\prod_{i\notin J}E^{[M/N]}(M/P_i)\]
Since $M/N$ is a semiprime Goldie module, by \cite[Theorem 2.22]{maugoldie}, $End_R(\widehat{M/N})$ is the classical right quotients ring of $End_R(M/N)$. 

Now, we claim that $E^{[M/N]}(M/P_i)=\E(M/P_i)$. By Proposition \ref{51} and Lemma \ref{12}, $Ann_M(\E(M/P_i))=P_i$ for all $1\leq i\leq n$. Hence $Ann_M(\bigoplus_{i\notin J}\E(M/P_i))=N$. We have a monomorphism from $M/N$ to $\bigoplus_{i\notin J}M/P_i$ so $N_M\E(M/N)=0$. Thus by \cite[Proposition 1.5]{PepeFbn} $\E(M/N)\in \sigma[M/N]$, it follows $E^{[M/N]}(M/P_i)=\E(M/P_i)$.
\end{proof}

\begin{prop}\label{1.8}
Let $M$ be progenerator in $\sigma[M]$ and suppose $M$ is a semiprime Goldie module. If $A\in\sigma[M]$ is non $M$-singular, quasi-injective and $Ann_{\widehat{M}}(\E(A))=0$ then $A$ is $M$-injective.
\end{prop}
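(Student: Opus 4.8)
The plan is to reduce everything to Proposition \ref{1.2} by producing a monomorphism from $M$ into a finite power of $A$. Recall first that $M$, being a semiprime Goldie module projective in $\sm$, is non $M$-singular and has finite uniform dimension; hence $\widehat{M}$ is non $M$-singular of finite uniform dimension, and since $A\leq_e\E(A)$ the module $\E(A)$ is non $M$-singular as well.

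First I would repeat the argument of Proposition \ref{1.2} with $\widehat{M}$ in place of $M$. Since $\E(A)$ is non $M$-singular, the kernel of each $g\in\Hom(\widehat{M},\E(A))$ is a closed submodule of $\widehat{M}$, and $\widehat{M}$ has finite uniform dimension, so it satisfies DCC on closed submodules. Because $Ann_{\widehat{M}}(\E(A))=0$, the same minimality argument used in the proof of Proposition \ref{1.2} gives finitely many $g_1,\dots,g_k\in\Hom(\widehat{M},\E(A))$ with $\bigcap_{i=1}^{k}Ker(g_i)=0$, hence a monomorphism $\varphi=(g_1,\dots,g_k)\colon\widehat{M}\to\E(A)^{k}$. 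As $\widehat{M}$ is $M$-injective, $W:=\varphi(\widehat{M})\cong\widehat{M}$ is a direct summand of the $M$-injective module $\E(A)^{k}$; write $\E(A)^{k}=W\oplus C$.

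The crucial step is to show that $W\subseteq A^{k}$. Since $A$ is quasi-injective so is $A^{k}$, and because $A^{k}\leq_e\E(A)^{k}$ the submodule $A^{k}$ is fully invariant in its $M$-injective hull $\E(A)^{k}$; therefore $A^{k}=(A^{k}\cap W)\oplus(A^{k}\cap C)$, and $B:=A^{k}\cap W$ is fully invariant in $W$, essential in $W$ (as $A^{k}\leq_e\E(A)^{k}$), a direct summand of $A^{k}$ (so quasi-injective), and non $M$-singular. To apply Proposition \ref{1.2} to $B$ it remains to check $Ann_M(B)=0$. Transporting $B$ through the isomorphism $\varphi^{-1}\colon W\to\widehat{M}$ gives a fully invariant essential submodule $B'\leq_e\widehat{M}$. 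Every endomorphism of $M$ extends to an endomorphism $\widetilde{\phi}$ of $\widehat{M}$ with $\widetilde{\phi}(M)\subseteq M$, so $B'\cap M$ is fully invariant in $M$; it is essential in $M$ because $B'$ and $M$ are essential in $\widehat{M}$. As $M$ is semiprime, $Ann_M(B'\cap M)\cap(B'\cap M)=0$ (for a fully invariant $K$ one has $K_{M}Ann_{M}(K)=0$, hence $(K\cap Ann_M(K))_M(K\cap Ann_M(K))=0$ and semiprimeness gives $K\cap Ann_M(K)=0$), so essentiality forces $Ann_M(B'\cap M)=0$; since $B'\cap M\subseteq B'$ this yields $Ann_M(B')=0$, i.e. $Ann_M(B)=0$. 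By Proposition \ref{1.2}, $B$ is $M$-injective, and being essential in $W$ it equals $W$; thus $W\subseteq A^{k}$.

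Finally, combining the inclusion $M\hookrightarrow\widehat{M}$, the isomorphism $\varphi\colon\widehat{M}\to W$, and the inclusion $W\subseteq A^{k}$ produces a monomorphism $M\to A^{k}$; composing it with the $k$ canonical projections $A^{k}\to A$ gives maps in $\Hom(M,A)$ whose kernels meet in $0$, so $Ann_M(A)=0$. Proposition \ref{1.2} then yields that $A$ is $M$-injective. I expect the main obstacle to be the crucial step: recognizing that $A^{k}\cap W$ is fully invariant inside the copy $W$ of $\widehat{M}$, so that the semiprimeness of $M$ (through $Ann_M(B'\cap M)=0$) verifies the hypothesis of Proposition \ref{1.2} for $B$ and forces $A^{k}\cap W=W$.
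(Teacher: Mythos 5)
Your argument is correct, but after the opening step it follows a genuinely different route from the paper's. Both proofs begin identically: since $\E(A)$ is non $M$-singular, kernels of maps $\widehat{M}\to\E(A)$ are closed, and DCC on closed submodules of $\widehat{M}$ together with $Ann_{\widehat{M}}(\E(A))=0$ yields finitely many maps whose kernels intersect in $0$; and both end by applying Proposition \ref{1.2} to $A$ once $Ann_M(A)=0$ is known. The paper gets $Ann_M(A)=0$ by staying inside $M$: it composes the maps $f_i$ with the projection $\E(A)\to\E(A)/A$, uses that $\E(A)/A$ is $M$-singular to see that $L=\bigcap_i\bigl(M\cap Ker(\pi f_i)\bigr)$ is essential in $M$ with $f_i(L)\leq A$, obtains a monomorphism $L\to A^m$, deduces $H_ML=0$ for $H=Ann_M(A)$, and then uses the semiprime symmetry $H_ML=0\Leftrightarrow L_MH=0$ plus $L\leq_e M$ and non-$M$-singularity of $H$ to force $H=0$. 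You instead show that the embedded copy $W=\varphi(\widehat{M})$ lies inside $A^{k}$: quasi-injectivity of $A^{k}$ makes it fully invariant in $\E(A)^{k}$, so $B=A^{k}\cap W$ is a fully invariant, essential, quasi-injective, non $M$-singular submodule of $W\cong\widehat{M}$; transporting to $\widehat{M}$, intersecting with $M$, and using semiprimeness you get $Ann_M(B)=0$, and a second application of Proposition \ref{1.2} gives $B=W$, hence a monomorphism $M\to A^{k}$ and $Ann_M(A)=0$. Your route costs an extra invocation of Proposition \ref{1.2} and of the Johnson--Wong fact that a quasi-injective module is fully invariant in its $M$-injective hull (a fact the paper itself uses in Corollary \ref{1.10} and Proposition \ref{1.16}), but it avoids both the $M$-singularity argument on $\E(A)/A$ and the symmetry lemma for products over a semiprime module, and it proves the slightly stronger statement that $\widehat{M}$ embeds in $A^{k}$. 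One small point of hygiene: to get $(K\cap Ann_M(K))_M(K\cap Ann_M(K))=0$ you quote $K_MAnn_M(K)=0$, which for this product requires the coincidence of left and right annihilators in a semiprime module; it is cleaner to use $Ann_M(K)_MK=0$, which holds by definition of $Ann_M(K)$ and gives the same conclusion by monotonicity of the product.
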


\begin{proof}
By Proposition \ref{1.2}, it is enough to prove $Ann_M(A)=0$. Let $H=Ann_M(A)$. Since $\widehat{M}$ has finite uniform dimension, $\widehat{M}$ satisfies DCC on closed submodules. 

If $X\subseteq Hom_R(\widehat{M},\E(A))$, then $\bigcap_{f\in X}Ker(f)$ is closed in $\widehat{M}$ because $\E(A)$ is non $M$-singular. We have that $Ann_{\widehat{M}}(\E(A))=0$, so there exist $f_1,...,f_m\in Hom_R(\widehat{M},\E(A))$ such that $0=\bigcap_{i=1}^m Ker(f_i)$. For every $1\leq i\leq m$ consider the morphisms $\pi f_i:\widehat{M}\to \E(A)/A$ where $\pi:A\to \E(A)/A$ is the canonical projection. Since $\E(A)/A$ is $M$-singular, by \cite[Proposition 2.5]{maugoldie} $M\cap Ker(\pi f_i)=Ker((\pi f_i)|_M)\leq_eM$. Let $L=\bigcap_{i=1}^m(M\cap Ker(\pi f_i))$, then $L\leq_eM$. 

We have that $f_i(L)\leq A$ for all $1\leq i \leq m$, so $H_M{f_i(L)}=0$ for all $1\leq i\leq m$. We define $F:L\to A^m$ as $F(l)=(f_1(l),...,f_m(l))$. Therefore, $Ker(F)=\bigcap_{i=1}^m(M\cap Ker(f_i))\leq\bigcap_{i=1}^m Ker(f_i)=0$. Thus, $F$ is a monomorphism. Hence $H_ML=0$. Since $M$ is a semiprime module, by \cite[Lemma 1.9]{maugoldie} $L_MH=0$, but if $H\neq 0$ this is a contradiction because $L\leq_eM$ and $H$ is no $M$-singular. Thus $H=0$.
\end{proof}

\begin{thm}\label{1.9}
Let $M$ be progenerator in $\sigma[M]$ and suppose $M$ is a semiprime Goldie module. If $A\in\sigma[M]$ is non $M$-singular and quasi-injective then $A$ is $M$-injective.
\end{thm}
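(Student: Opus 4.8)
The plan is to deduce Theorem \ref{1.9} from Proposition \ref{1.8} by removing the hypothesis $Ann_{\widehat{M}}(\E(A))=0$. The idea is to decompose $\E(A)$ according to the minimal primes of $M$ and work on the complementary summand where the annihilator does vanish. Concretely, let $P_1,\dots,P_n$ be the minimal prime in $M$ submodules, $N_i=Ann_M(P_i)$, and recall from the proof of \cite[Theorem 2.20]{mauacc} (as used in Proposition \ref{1.1}) that $\widehat{M}=E_1^{k_1}\oplus\cdots\oplus E_n^{k_n}$ with $E_i$ indecomposable injective and $\E(N_i)\cong E_i^{k_i}$. Since $A$ is non $M$-singular, $\E(A)$ is a non $M$-singular injective module in $\sigma[M]$, so by Krull--Remak--Schmidt--Azumaya $\E(A)\cong E_{i_1}^{t_1}\oplus\cdots$ for some subset of the $E_i$'s; equivalently $\E(A)\cong\bigoplus_{i\in S}E_i^{t_i}$ for some $\emptyset\neq S\subseteq\{1,\dots,n\}$ and $t_i>0$ (note $A\neq 0$ may be assumed, the zero case being trivial).

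First I would identify the ring $B=End_R(\bigoplus_{i\in S}\E(M/P_i))$ and use Proposition \ref{8}.\textit{2} (applicable since $M$ is a progenerator) together with Proposition \ref{51} to see that, after replacing $M$ by the semiprime Goldie module $M'=M/\bigcap_{i\notin S}P_i$ (which is a progenerator in $\sigma[M']$ with minimal primes $\{P_i/\bigcap_{j\notin S}P_j\}_{i\in S}$ by Remark \ref{1.5} and Proposition \ref{1.4}), we have $\sigma[M']\subseteq\sigma[M]$ and $A\in\sigma[M']$ with $\widehat{M'}\cong\bigoplus_{i\in S}\E(M/P_i)$ covering $\E(A)$. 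The key point to verify is that $A$ remains non $M'$-singular and quasi-injective in $\sigma[M']$ — quasi-injectivity is intrinsic, and non $M'$-singularity follows because an $M'$-singular quotient is in particular $M$-singular (since $\sigma[M']\subseteq\sigma[M]$, an essential extension in $\sigma[M']$ is essential, hence $\mathcal Z_{M'}(A)\leq\mathcal Z_M(A)=0$). Now $Ann_{\widehat{M'}}(\E(A))=0$: indeed, by Lemma \ref{12} and Proposition \ref{51}, $Ann_{M}(\E(N_i))=P_i$, so $Ann_{\widehat{M'}}(\bigoplus_{i\in S}\E(M/P_i)) = $ the image of $\bigcap_{i\in S}P_i$, which becomes $0$ in $M'$; and since $\E(A)$ contains a copy of $E_i$ for every $i\in S$, its annihilator in $\widehat{M'}$ is contained in $\bigcap_{i\in S}(\text{image of }P_i)=0$.

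With $Ann_{\widehat{M'}}(\E(A))=0$ established, Proposition \ref{1.8} applied to the semiprime Goldie progenerator $M'$ gives that $A$ is $M'$-injective. It remains to upgrade this to $M$-injectivity. Since $A\in\sigma[M']$ and $A$ is $M'$-injective, $A=\widehat{A}^{[M']}$, i.e. $A$ is injective in $\sigma[M']$; I would then argue that $\E(A)=\mathrm{tr}^M(E^{[R]}(A))\cap\cdots$ — more directly, $A$ being $M'$-injective and $\widehat{M'}\cong\bigoplus_{i\in S}\E(M/P_i)$ being a direct summand of $\widehat{M}$, one checks $\E(A)$ (computed in $\sigma[M]$) already lies in $\sigma[M']$, so $\E(A)=A$, giving $M$-injectivity. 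The cleanest route for this last step is: $A\leq_e\E(A)$ in $\sigma[M]$, but $\E(A)\in\sigma[M']$ by the annihilator computation ${P'}_{M'}\E(A)=0$ together with \cite[Proposition 1.5]{PepeFbn} as in Proposition \ref{8}.\textit{2}, and $A$ is injective in $\sigma[M']$, forcing $A=\E(A)$.

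The main obstacle I anticipate is the bookkeeping around changing the subgenerator from $M$ to $M'$: one must be careful that $M'$ is genuinely a progenerator in $\sigma[M']$ (projectivity is automatic since $\bigcap_{i\notin S}P_i$ is fully invariant; generator-ness needs the argument that $M$ a generator in $\sigma[M]$ forces $M'=M/\text{FI}$ a generator in $\sigma[M']$), and that the $M$-injective hull of $A$ and the $M'$-injective hull of $A$ coincide — this is exactly the content made available by Proposition \ref{8}.\textit{2}'s technique. Once that identification is in hand, the theorem is just Proposition \ref{1.8} applied in the right category.
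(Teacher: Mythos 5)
Your overall strategy is the same as the paper's: cut $\widehat{M}$ down to the part of the spectrum that ``sees'' $\E(A)$, pass to the quotient $M/N$ by the corresponding intersection of minimal primes (a semiprime Goldie module by Proposition \ref{1.4}, via Corollary \ref{1.6}), identify $\E(A)$ with $E^{[M/N]}(A)$ through $N_M\E(A)=0$ and \cite[Proposition 1.5]{PepeFbn}, and then invoke Proposition \ref{1.8}. The genuine gap is in how you locate that part of the spectrum: you assert $\E(A)\cong\bigoplus_{i\in S}E_i^{t_i}$ ``by Krull--Remak--Schmidt--Azumaya''. KRSA is a uniqueness statement for decompositions into modules with local endomorphism rings; it does not give existence, and $A$ (hence $\E(A)$) may well have infinite uniform dimension, so nothing available here guarantees that an arbitrary non $M$-singular injective in $\sigma[M]$ decomposes as a direct sum of the indecomposables $E_1,\dots,E_n$ (such a statement needs something like $\Sigma$-injectivity of $\widehat{M}$, which you neither prove nor cite). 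The paper's proof is built precisely to avoid this: it sets $L=Ann_{\widehat{M}}(\E(A))$, which is a fully invariant closed submodule of $\widehat{M}$ and hence $M$-injective by Proposition \ref{1.3}; Proposition \ref{1.1} and Corollary \ref{1.6} then identify $N=M\cap L$ as an intersection of minimal primes, and the hypothesis $Ann_{\widehat{M/N}}(\E(A))=0$ of Proposition \ref{1.8} is obtained directly from $L\cap K=0$, where $K$ is the complementary summand and $K\cong\widehat{M/N}$ --- no decomposition of $\E(A)$ is ever needed.

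There is also a concrete indexing error. By Remark \ref{1.5}, the minimal primes of $M'=M/\bigcap_{i\notin S}P_i$ are $\{P_j/\bigcap_{i\notin S}P_i\}_{j\notin S}$, not $\{P_i/\bigcap_{j\notin S}P_j\}_{i\in S}$, so $\widehat{M'}\cong\bigoplus_{j\notin S}\E(M/P_j)$ --- exactly the summand that does not interact with $\E(A)$. With your literal choice of $M'$ one has $\bigcap_{i\notin S}P_i\not\leq Ann_M(\E(A))=\bigcap_{i\in S}P_i$ for proper $S$ (a containment would force $P_j\leq P_i$ for some $j\notin S$, $i\in S$), so $A\notin\sigma[M']$ and the annihilator hypothesis of Proposition \ref{1.8} fails; the quotient you want is $M'=M/\bigcap_{i\in S}P_i$. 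That slip is repairable, and with it your remaining bookkeeping (quasi-injectivity is intrinsic, $M'$-singular implies $M$-singular, $M'$ is again a progenerator, and the final upgrade $A=E^{[M']}(A)=\E(A)$) runs parallel to the paper's argument; but the unproved existence of the decomposition of $\E(A)$ remains the essential missing step.
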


\begin{proof}
Suppose $P_1,...,P_n$ are the minimal prime in $M$ submodules and let $N_i=Ann_M(P_i)$. Consider $\E(A)$. By Proposition \ref{1.3}, $L:=Ann_{\widehat{M}}(\E(A))$ is $M$-injective, so $\widehat{M}=L\oplus K$. Let $N=M\cap L$. Then $N=\bigcap_{i\notin J}P_i$ for some $\emptyset\neq J\subseteq\{1,...,n\}$ and $M/N$ is a semiprime Goldie module by Corollary \ref{1.6}.

Note that $N\subseteq Ann_M(\E(A))$, hence $N_M(\E(A))=0$. By \cite[Proposition 1.5]{PepeFbn} $\E(A)\in\sigma[M/N]$. This implies that $\E(A)=E^{[M/N]}(A)$. Since
\[M/N=M/(M\cap L)\cong (M+L)/L\leq \widehat{M}/L\cong K\]
then $\sigma[M/N]\subseteq\sigma[K]$.

If $k\in K$ is such that $f(k)=0$ for all $f:K\to \E(A)$ then $k\in L\cap K=0$. Hence, $Ann_K(\E(A))=0$. By Corollary \ref{4} and Remark \ref{1.5}, 
\[\widehat{M/N}\cong \bigoplus_{i\notin J}E^{[M/N]}(M/P_i)\]
Moreover, by the proof of Proposition \ref{1.7}
\[\widehat{M/N}\cong\bigoplus_{i\notin J} \E(M/P_i)\cong\bigoplus_{i\notin J}\E(N_i)\cong K\]
Thus $\widehat{M/N}=K\in \sigma[K]=\sigma[M/N]$.

Hence, we are in the hypothesis of Proposition \ref{1.8}, thus $A$ is $M/N$-injective, which implies that $A=E^{[M/N]}(A)=\E(A)$. Therefore $A$ is $M$-injective.
\end{proof}

\begin{dfn}
Let $M$ be an $R$-module. It is said $M$ is a $QI$-module if every $A\in\sigma[M]$ quasi-injective module is $M$-injective.

A ring $R$ is left $QI$-ring if it is a $QI$-module as left module over itself.
\end{dfn}

In \cite{daunsqi} the authors give many characterizations of $QI$-modules. Here, we show some generalizations of Faith's results that appear in \cite{faithhereditary}

\begin{cor}\label{1.10}
Let $M$ be progenerator in $\sigma[M]$. Suppose $M$ is a semiprime Goldie module. If every $A\in \sigma[M]$ quasi-injective and $M$-singular module is $M$-injective then $M$ is a $QI$-module.
\end{cor}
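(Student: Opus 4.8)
The plan is to reduce the general case to the $M$-singular case that is assumed in the hypothesis, by splitting an arbitrary quasi-injective $A\in\sigma[M]$ into a non $M$-singular part and an $M$-singular part. First I would recall that every module $A\in\sigma[M]$ has a largest $M$-singular submodule $\mathcal{Z}(A)$, and since $M$ is a semiprime Goldie module it is non $M$-singular (this is part of the Goldie-module hypothesis; the singular submodule is nilpotent in the product sense and hence zero by semiprimeness). When $A$ is quasi-injective, $\mathcal{Z}(A)$ is fully invariant in $A$, and because $A$ has finite uniform dimension (it embeds, via its injective hull $\E(A)$, into a finite direct sum of indecomposable injectives inside $\widehat{M}$, which has finite uniform dimension by the Goldie hypothesis), the hull $\E(A)$ decomposes as a direct sum of an injective non $M$-singular part and an injective $M$-singular part.

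The key step is the decomposition $\E(A)=B\oplus C$ where $B$ is non $M$-singular and $C$ is $M$-singular, with $\mathcal{Z}(\E(A))=C$. This follows because $\mathcal{Z}(\E(A))$ is a fully invariant (hence closed, by non $M$-singularity of $M$-injective modules over a non $M$-singular base, using that closed submodules of $M$-injectives are $M$-injective) submodule of the $M$-injective module $\E(A)$, so it is itself $M$-injective and thus a direct summand: $\E(A)=C\oplus B$ with $C=\mathcal{Z}(\E(A))$ and $B$ non $M$-singular. Now I would set $A_1=A\cap B$ and $A_2=A\cap C$; since the projections $\E(A)\to B$ and $\E(A)\to C$ are endomorphisms and $A$ is quasi-injective (hence fully invariant in $\E(A)$ — this is the standard characterization), $A=A_1\oplus A_2$ with $A_1=A\cap B$ essential in $B$ and $A_2=A\cap C=\mathcal{Z}(A)$ essential in $C$. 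Each summand of a quasi-injective module is quasi-injective, so $A_1$ is non $M$-singular quasi-injective and $A_2$ is $M$-singular quasi-injective.

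Then I would apply the two available results: $A_1$ is $M$-injective by Theorem \ref{1.9} (non $M$-singular quasi-injective over a semiprime Goldie progenerator), and $A_2$ is $M$-injective by the hypothesis of the corollary (every $M$-singular quasi-injective module in $\sigma[M]$ is $M$-injective). A finite direct sum of $M$-injective modules is $M$-injective, so $A=A_1\oplus A_2$ is $M$-injective. Since $A$ was an arbitrary quasi-injective module in $\sigma[M]$, this shows $M$ is a $QI$-module.

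The main obstacle I anticipate is justifying cleanly that $\E(A)$ (equivalently $A$) splits off its $M$-singular submodule as a direct summand: one must argue that $\mathcal{Z}(\E(A))$ is closed in $\E(A)$ and hence $M$-injective. This uses that $M$ is non $M$-singular, so that $\mathcal{Z}(-)$ behaves well — specifically, for a non $M$-singular base, the $M$-singular submodule of any module is closed, because an essential extension of an $M$-singular module by an $M$-singular module is $M$-singular while any proper essential extension would force $M$-singular elements outside $\mathcal{Z}$, a contradiction. Once that lemma is in hand, the direct-sum decomposition of $A$ via quasi-injectivity (full invariance in the hull) and the two cited results finish the argument routinely.
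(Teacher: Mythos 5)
Your overall strategy --- split $A$ into an $M$-singular part and a non $M$-singular part, handle the first by the hypothesis and the second by Theorem \ref{1.9} --- is in spirit the paper's, but the way you obtain the splitting has a genuine gap. Everything rests on the claim that $\mathcal{Z}(\E(A))$ is a direct summand of $\E(A)$, which you justify by saying it is ``closed, because an essential extension of an $M$-singular module by an $M$-singular module is $M$-singular''. That sentence is exactly the statement that needs proof (closure of the class of $M$-singular modules in $\sm$ under essential extensions when $\mathcal{Z}(M)=0$); as written the justification is circular, and the surrounding parenthetical is wrong on two counts: fully invariant submodules need not be closed, and $M$-injective modules over a non $M$-singular $M$ can perfectly well be $M$-singular (e.g.\ $\mathbb{Z}_{p^{\infty}}$ over $\mathbb{Z}$). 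The lemma you need is in fact true --- every $N\in\sm$ is an epimorphic image of some $L\leq M^{(\Lambda)}$ with $\mathcal{Z}(L)=0$, and comparing kernels shows that an extension of an $M$-singular module by an $M$-singular one is again $M$-singular --- but it must be proved or cited, not restated. A second, smaller error: an arbitrary quasi-injective $A\in\sm$ need not have finite uniform dimension and $\E(A)$ need not embed in $\widehat{M}$ (take an infinite direct sum of pairwise non-isomorphic simples, as in the paper's own remark on $QI$-modules); fortunately this claim is not actually needed once the summand property of $\mathcal{Z}(\E(A))$ is available.

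The paper avoids the structural lemma altogether by splitting $A$ rather than $\E(A)$: it first checks that $\mathcal{Z}(A)$ is quasi-injective --- any endomorphism of $\E(\mathcal{Z}(A))$ extends to an endomorphism of $\E(A)$, which maps $A$ into $A$ because $A$ is quasi-injective, and maps $\mathcal{Z}(A)$ into $\mathcal{Z}(A)$ because images of $M$-singular modules are $M$-singular. The hypothesis of the corollary then makes $\mathcal{Z}(A)$ $M$-injective, hence a direct summand, $A=\mathcal{Z}(A)\oplus K$; the summand $K$ is quasi-injective and non $M$-singular, so Theorem \ref{1.9} applies and $A$ is $M$-injective. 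This is shorter and uses only facts already at hand; if you keep your route, supply a proof of the closure property of $\mathcal{Z}$ and delete the finite uniform dimension remark.
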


\begin{proof}
Let $A\in\sigma[M]$ be a quasi-injective module. If $A$ is non $M$-singular then $A$ is $M$-injective by Theorem \ref{1.9}. Suppose $\mathcal{Z}(A)\neq 0$. 

We claim $\mathcal{Z}(A)$ is quasi-injective. Let $f:\E(\mathcal{Z}(A))\to \E(\mathcal{Z}(A))$. Then there exists $\overline{f}\in End_R(\E(A))$ such that $\overline{f}|_{\E(A)}=f$. Since $A$ is quasi-injective $\overline{f}(A)\leq A$, hence $\overline{f}(\mathcal{Z}(A))\leq \mathcal{Z}(A)$. Therefore $f(\mathcal{Z}(A))\leq \mathcal{Z}(A)$. Thus $\mathcal{Z}(A)$ is quasi-injective.

By hypothesis $\mathcal{Z}(A)$ is $M$-injective, so $\mathcal{Z}(A)$ is a direct summand of $A$, i.e., $A=\mathcal{Z}(A)\oplus K$. Since $A$ is quasi-injective, $K$ is quasi-injective; also $K$ is non $M$-singular. By Theorem \ref{1.9} $K$ is $M$-injective. Thus $A$ is $M$-injective. 
\end{proof}


\begin{dfn}\label{2.6}
Let $M$ be an $R$-module. Is is said $M$ is hereditary if every submodule of $M$ is projective in $\sm$.

A ring $R$ is left hereditary if it is as left $R$-module.
\end{dfn}

\begin{lemma}\label{2.6.1}
Let $M$ be projective in $\sm$ and semiprime. Suppose $M$ is non $M$-singular. If $U\leq M$ is an uniform submodule then $U$ is compressible.
\end{lemma}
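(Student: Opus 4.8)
The plan is to show that for a uniform submodule $U \le M$, any nonzero submodule $0 \ne V \le U$ admits a monomorphism $U \hookrightarrow V$. First I would recall what ``compressible'' means here: $U$ is compressible if it embeds in each of its nonzero submodules. So fix $0 \ne V \le U$; the goal is to produce $f \in \Hom(U,V)$ which is injective. The natural thing to exploit is that $M$ is projective in $\sm$ and semiprime, so the product $U_M V$ behaves well and annihilators satisfy $Ann_M = Ann^r_M$ (by \cite[Proposition 1.16]{mauacc}). Since $M$ is semiprime and $V \ne 0$, we have $V_M V \ne 0$; more to the point, I want to argue $U_M V \ne 0$: if $U_M V = 0$ then $U \le Ann^r_M(V)= Ann_M(V)$, but $V \le U$ would give $V_M V \le U_M V = 0$, contradicting semiprimeness. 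Hence there is some $h \in \Hom(M,V)$ with $h(U) \ne 0$; restricting, $h|_U : U \to V$ is a nonzero homomorphism.

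The next step is to upgrade ``nonzero'' to ``injective.'' Here is where the hypotheses that $M$ is non $M$-singular and $U$ is uniform enter. Let $g = h|_U : U \to V \le U$ be nonzero. Then $\ker g$ is a submodule of the uniform module $U$ with $\ker g \ne U$, so either $\ker g = 0$ (and we are done) or $\ker g \ne 0$, in which case $\ker g \le_e U$ because $U$ is uniform. But then $U/\ker g \cong g(U) \le V \le U$, and $U/\ker g$ is an $M$-singular module (quotient of $U$ by an essential submodule), so $g(U)$ is $M$-singular. Since $g(U) \le U \le M$ and $M$ is non $M$-singular, $g(U) = 0$, contradicting $g \ne 0$. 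Therefore $\ker g = 0$ and $g : U \hookrightarrow V$ is the desired monomorphism.

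Putting the two steps together: for every nonzero $V \le U$ there is a monomorphism $U \hookrightarrow V$, i.e. $U$ is compressible. The main obstacle — really the only delicate point — is the very first step, ensuring that there actually exists a \emph{nonzero} homomorphism $U \to V$; this is exactly where projectivity of $M$ in $\sm$ and semiprimeness are needed, via the identity $Ann_M = Ann^r_M$ and the fact that in a semiprime module no nonzero fully invariant-type ``square'' vanishes. Once a nonzero map is in hand, the uniform-plus-nonsingular argument forcing injectivity is routine. One should double-check the small point that $\ker g$ essential in $U$ makes $U/\ker g$ genuinely $M$-singular in the sense of the definition given in the preliminaries (exact sequence $0 \to \ker g \to U \to U/\ker g \to 0$ with $\ker g \le_e U$), which is immediate from that definition.
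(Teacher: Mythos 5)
Your argument is correct and follows essentially the same route as the paper: show $U_MV\neq 0$ via semiprimeness (since $V\leq U$ forces $V_MV\leq U_MV$), obtain a nonzero $h\in \Hom(M,V)$ with $h(U)\neq 0$, and use uniformity of $U$ together with non $M$-singularity to conclude $h|_U$ is a monomorphism. The detour through $Ann^r_M(V)=Ann_M(V)$ is harmless but unnecessary, since $V_MV\leq U_MV=0$ already gives the contradiction directly.
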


\begin{proof}
Let $U\leq M$ uniform and $0\neq V\leq U$. If $U_MV=0$ then $V_MV=0$ but $M$ is semiprime then $V=0$. Contradiction. So, $U_MV\neq 0$. Hence there exists $f:M\to V$ such that $f(U)\neq 0$, since $V$ is non $M$-singular then $U\cap Ker(f)=0$. Thus there exists a monomorphis $f|_U:U\to V$.
\end{proof}

\begin{prop}\label{2.7}
Let $M$ be hereditary and a semiprime Goldie module. Then every submodule $N\leq M$ is a semiprime Goldie module.
\end{prop}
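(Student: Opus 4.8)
The plan is to use the criterion from \cite[Theorem 2.8]{maugoldie} that characterizes semiprime Goldie modules: a module $N$ projective in $\sigma[N]$ is a semiprime Goldie module if and only if $N$ is semiprime, non $N$-singular, essentially compressible and of finite uniform dimension. So given $N\leq M$, I would verify these four properties one at a time, exploiting hereditarity of $M$ throughout.

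First, projectivity: since $M$ is hereditary, $N$ is projective in $\sigma[M]$, and since $N\leq M$ we have $\sigma[N]\subseteq\sigma[M]$; projectivity in $\sigma[M]$ restricts to projectivity in $\sigma[N]$ (a projective object of a category that lies in a subcategory closed under the relevant constructions is projective there). Next, $N$ has finite uniform dimension because it is a submodule of $M$, which has finite uniform dimension. For non-singularity, note that $\mathcal{Z}(N)\subseteq\mathcal{Z}(M)=0$ where the $M$-singular submodule is used; more care is needed because the relevant notion for $N$ is $N$-singularity, but since $\sigma[N]\subseteq\sigma[M]$ every $N$-singular module is $M$-singular, hence $\mathcal{Z}_{N}(N)\subseteq\mathcal{Z}_{M}(N)\subseteq\mathcal{Z}_{M}(M)=0$. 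Then $N$ is semiprime: if $K\leq_{FI}N$ with $K_{N}K=0$, I would want to relate the product in $N$ to the product in $M$. Since $N$ is fully invariant-free of obstructions here, one can use that $M$ is projective so $K_{M}K$ is controlled; the subtlety is that a fully invariant submodule of $N$ need not be fully invariant in $M$. I expect this to be the main obstacle — translating the semiprimeness of $M$ into semiprimeness of an arbitrary submodule $N$, since prime/semiprime submodules are defined via the product and fully invariant submodules, and these are not automatically inherited.

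To handle semiprimeness of $N$, I would argue as follows: take $0\neq K\leq_{FI} N$ and suppose $K_{N}K=0$. Pick a uniform submodule $U\leq K$. By Lemma \ref{2.6.1}, every uniform submodule of $M$ (and $N$ inherits non-singularity and is projective, so the lemma applies to $N$ as well, since $N$ is semiprime — but that is what we are proving, so instead apply the lemma inside $M$) is compressible, so $U$ embeds in each of its nonzero submodules. Using projectivity of $N$ in $\sigma[N]$ and the structure of the product $K_{N}K$, one shows $U_{N}U=0$ forces $U_{M}U=0$ via a homomorphism lifting argument, contradicting semiprimeness of $M$. Alternatively, and more cleanly, I would invoke \cite[Theorem 2.8]{maugoldie} in the form: essentially compressible plus non $N$-singular already gives semiprime, so it suffices to establish essential compressibility of $N$ and skip a direct verification of semiprimeness.

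Finally, essential compressibility of $N$: $M$ is essentially compressible by \cite[Theorem 2.8]{maugoldie}, and I would cite \cite[Proposition 1.2]{smithessentially} (already used in Lemma \ref{2.1}) which should give that submodules of essentially compressible modules are essentially compressible, or deduce it directly — if $L\leq_{e}N$ then $L$ contains, for each uniform $U\leq N$, a copy of $U$ by Lemma \ref{2.6.1} applied in $M$, and assembling these over a finite essential independent family $\bigoplus U_i\leq_e N$ yields a monomorphism $N\hookrightarrow L$. Combining: $N$ is projective in $\sigma[N]$, has finite uniform dimension, is non $N$-singular, and is essentially compressible, hence by \cite[Theorem 2.8]{maugoldie} (which also yields semiprimeness) $N$ is a semiprime Goldie module. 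The delicate point to write carefully is the compressibility-to-monomorphism assembly, making sure the finitely many embeddings $U_i\hookrightarrow L$ can be combined into a single monomorphism $N\to L$ using that $\bigoplus U_i\leq_e N$ and $N$ is non-singular.
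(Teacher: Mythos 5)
Your overall skeleton coincides with the paper's: hereditarity gives projectivity of $N$ in $\sigma[N]$, finite uniform dimension is inherited from $M$, and everything is reduced via \cite[Theorem 2.8]{maugoldie} to showing that $N$ is essentially compressible. The genuine gap is at exactly that key step, and neither of your two routes closes it. \cite[Proposition 1.2]{smithessentially} is the statement about finite direct sums (it is what Lemma \ref{2.1} uses to pass from $M$ to $M^n$); it says nothing about submodules, and the blanket claim that ``submodules of essentially compressible modules are essentially compressible'' is precisely the nontrivial point you cannot simply assert. Your direct assembly also falls short: compressibility of the uniforms $U_i$ (Lemma \ref{2.6.1}) together with $L\cap U_i\neq 0$ gives monomorphisms $U_i\to L$, hence a monomorphism $\bigoplus_{i}U_i\to L$, but to upgrade this to a monomorphism $N\to L$ you need an embedding $N\hookrightarrow\bigoplus_i U_i$ into the essential direct sum of uniforms of $N$, and that is not a consequence of non $N$-singularity --- it is essentially an instance of the essential compressibility you are trying to prove. (Over $\mathbb{Z}$, $\mathbb{Q}$ is nonsingular and $\mathbb{Z}\leq_e\mathbb{Q}$, yet $\mathbb{Q}$ does not embed in $\mathbb{Z}$; such an $N$ cannot in fact occur here, but ruling it out is the content of the proof, not a free step.)

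The paper closes the gap as follows: choose uniforms $U_1,\dots,U_n$ with $\bigoplus_{i=1}^nU_i\leq_e M$; by Lemma \ref{2.6.1} each $U_i$ is compressible, and since $M$ is essentially compressible (\cite[Theorem 2.8]{maugoldie}) there is a monomorphism $M\to\bigoplus_{i=1}^nU_i$. Hence $M$, and with it every submodule $N$, embeds in a finite direct sum of compressible modules, and \cite[Proposition 1.8]{smithessentially} (not Proposition 1.2) then yields that $N$ is essentially compressible; Theorem 2.8 of \cite{maugoldie} finishes the argument. So the missing ingredients in your proposal are this embedding of $M$ into a finite direct sum of compressible modules and the correct citation doing the submodule step; your verification of projectivity, of finite uniform dimension, and your decision to let \cite[Theorem 2.8]{maugoldie} deliver semiprimeness rather than checking it by hand are all fine and agree with the paper.
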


\begin{proof}
Since $M$ is hereditary $N$ is projective in $\sm$ and thus it is projective in $\sigma[N]$. Since $M$ has finite uniform dimension there exist uniform submodules $U_1,...,U_n$ such that $\bigoplus_{i=1}^nU_i\leq_e M$, aslo by Lemma \ref{2.6.1} each $U_i$ is compressible. By \cite[Theorem 2.8]{maugoldie} $M$ is a essentially compressible, so there exists a monomorphims $M\to \bigoplus_{i=1}^nU_i$. Then, by \cite[Proposition 1.8]{smithessentially} $N$ is essentially compressible. Thus, by \cite[Teorema 2.8]{maugoldie} $N$ is a semiprime Goldie module.
\end{proof}

\begin{cor}\label{2.8}
Let $R$ be a left hereditary ring. If $R$ is a semiprime left Goldie ring then every left ideal is a Goldie module.
\end{cor}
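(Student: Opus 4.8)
The plan is to derive Corollary \ref{2.8} as an immediate consequence of Proposition \ref{2.7}, exploiting the standard dictionary between module-theoretic properties of $R$ as a left module over itself and ring-theoretic properties of $R$. First I would note that a ring $R$ is a left hereditary ring precisely when ${}_RR$ is a hereditary module in the sense of Definition \ref{2.6}: every left ideal of $R$ is a submodule of ${}_RR$, and $\sigma[{}_RR] = R\text{-Mod}$, so projectivity in $\sigma[{}_RR]$ is just ordinary projectivity of left $R$-modules; hence ``every submodule of ${}_RR$ is projective in $\sigma[{}_RR]$'' is exactly the classical statement that every left ideal is projective.

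Next I would observe that ${}_RR$ being a semiprime left Goldie ring makes ${}_RR$ a semiprime Goldie module: $R$ is projective in $\sigma[{}_RR] = R\text{-Mod}$, $R$ semiprime as a ring is the same as $0$ being a semiprime submodule of ${}_RR$ (the module product $N_R K$ for ideals agrees with the usual ideal product when $M = R$), the left Goldie conditions (ACC on left annihilators and finite uniform dimension) are exactly the conditions defining a Goldie module, so ${}_RR$ satisfies the hypotheses of Proposition \ref{2.7}. Applying that proposition, every submodule $N \leq {}_RR$ — that is, every left ideal of $R$ — is a semiprime Goldie module, which is in particular a Goldie module. This is all that the corollary asserts.

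Since the corollary only claims that each left ideal is a Goldie module (not a semiprime Goldie module), there is essentially nothing left to prove beyond invoking Proposition \ref{2.7}; the proof is a single line. The only mild subtlety — and the point I would be most careful to spell out — is the translation in the first paragraph, namely verifying that the module-theoretic notions ``hereditary module'', ``semiprime module'', and ``Goldie module'' applied to ${}_RR$ really do coincide with the ring-theoretic notions of left hereditary, semiprime, and left Goldie ring. Once that identification is in place, Proposition \ref{2.7} applies verbatim and delivers the conclusion. I do not anticipate any genuine obstacle here; the work is purely in recording the correspondence carefully.

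\begin{proof}
Since $R$ is left hereditary, every left ideal of $R$ is projective as a left $R$-module, and as $\sigma[{}_RR] = R\text{-Mod}$ this says precisely that ${}_RR$ is a hereditary module. Moreover, $R$ being a semiprime left Goldie ring means exactly that ${}_RR$ is a semiprime Goldie module: $R$ is projective in $\sigma[{}_RR]$, the zero submodule is semiprime in ${}_RR$, and the left Goldie conditions are the defining conditions of a Goldie module. Hence the hypotheses of Proposition \ref{2.7} are satisfied, and therefore every submodule $N \leq {}_RR$, i.e.\ every left ideal of $R$, is a semiprime Goldie module; in particular it is a Goldie module.
\end{proof}
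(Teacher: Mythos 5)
Your proof is correct and is exactly the argument the paper intends: the corollary is the specialization of Proposition \ref{2.7} to $M={}_RR$ (where $\sigma[{}_RR]=R$-Mod makes the hereditary, semiprime, and Goldie conditions for the module coincide with the corresponding ring-theoretic ones), which is why the paper states it without proof. Your careful spelling out of that dictionary is fine and adds nothing beyond what the paper leaves implicit.
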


\begin{rem}
As example of a ring that satisfies the conditions of Corollary \ref{2.8} is the Cozzen's ring given in \cite{cozzenshomological}. 

Boyle's conjecture says that every left $QI$-ring is left hereditary, if this conjecture was true then a $QI$-ring would satisfy the conditions of Corollary \ref{2.8}
\end{rem}

\section{Co-semisimple Goldie Modules}

\begin{dfn}
Let $M$ be an $R$-module. $M$ is \emph{co-semisimple} if every simple module in $\sigma[M]$ is $M$-injective. 

If $M= {_RR}$, this is the definition of a \emph{left $V$-ring. }
\end{dfn}

\begin{rem}
It is well known that if $R$ is a left $QI$-ring then $R$ is left noetherian $V$-ring. In the case of modules, a $QI$-module is not, in general, a noetherian module. In fact, let $R$ be a ring such that $\{S_i\}_I$ is an infinity family of non isomorphic simple $R$-modules. Consider $M=\bigoplus_I S_i$, then $M$ is a $QI$-module but $M$ has no finite uniform dimension so $M$ is not noetherian.
\end{rem}

The following characterization of co-semisimple modules is given in \cite[23.1]{wisbauerfoundations}

\begin{prop}\label{1.11}
For an $R$-module $M$ the following statements are equivalent:
\begin{enumerate}
	\item $M$ is co-semisimple.
	\item Any proper submodule of $M$ is an intersection of maximal submodules.
\end{enumerate}
\end{prop}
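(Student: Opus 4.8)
The plan is to establish the two implications separately, using in each direction the Baer-type criterion for relative injectivity (see \cite{wisbauerfoundations}): a module $Q\in\sigma[M]$ is $M$-injective if and only if every $R$-homomorphism from a submodule of $M$ into $Q$ extends to all of $M$. The argument needs none of the Goldie-module machinery of the preceding sections, only the definitions of $\sigma[M]$, of $M$-injectivity, and of the radical.

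For $(1)\Rightarrow(2)$, I would fix a proper submodule $N<M$ and an arbitrary nonzero element $\bar x=x+N$ of $M/N$, and produce a maximal submodule of $M/N$ that avoids $\bar x$. Since $R\bar x$ is a nonzero cyclic module it possesses a maximal submodule $K$, and $K$ cannot contain the generator $\bar x$. The quotient $R\bar x/K$ is simple and belongs to $\sigma[M]$ (it is a subquotient of $M/N\in\sigma[M]$), hence is $M$-injective by hypothesis. Applying $M$-injectivity to the inclusion $R\bar x\hookrightarrow M/N$ and the canonical epimorphism $R\bar x\to R\bar x/K$ yields $g\colon M/N\to R\bar x/K$ with $g(\bar x)=\bar x+K\neq 0$; as $g$ surjects onto the simple module $R\bar x/K$, its kernel is a maximal submodule of $M/N$ not containing $\bar x$. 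Since $\bar x$ was arbitrary, $\operatorname{Rad}(M/N)=0$, and because the maximal submodules of $M/N$ correspond to the maximal submodules of $M$ containing $N$, this says precisely that $N$ is the intersection of the maximal submodules of $M$ above it.

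For $(2)\Rightarrow(1)$, let $S\in\sigma[M]$ be simple; by the Baer criterion it suffices to extend an arbitrary $f\colon N\to S$ with $N\leq M$. If $f=0$ there is nothing to do, so assume $f$ is onto; then $\ker f$ is maximal in $N$ and in particular a proper submodule of $M$. By hypothesis $\ker f$ is an intersection of maximal submodules of $M$, so not every maximal submodule of $M$ containing $\ker f$ can also contain $N$ (otherwise that intersection would contain $N$, contradicting $\ker f\subsetneq N$); choose a maximal $L\leq M$ with $\ker f\subseteq L$ but $N\not\subseteq L$. Then $L\cap N=\ker f$ and $N+L=M$, since $(N+L)/L$ is a nonzero submodule of the simple module $M/L$, whence $M/L\cong N/(N\cap L)=N/\ker f\cong S$; composing $M\to M/L$ with these isomorphisms gives a homomorphism $M\to S$ restricting to $f$ on $N$. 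Thus $S$ is $M$-injective, i.e. $M$ is co-semisimple.

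The routine ingredients — existence of a maximal submodule in a nonzero cyclic module, and the lattice correspondence between maximal submodules of $M/N$ and of $M$ — cause no difficulty; the only substantive points are the extension of the epimorphism $R\bar x\to R\bar x/K$ in $(1)\Rightarrow(2)$, which is exactly where co-semisimplicity enters, and the selection of the maximal submodule $L$ in $(2)\Rightarrow(1)$, where hypothesis $(2)$ is used.
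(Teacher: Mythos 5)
Your argument is correct, but it is worth noting that the paper does not prove this statement at all: it is quoted verbatim from \cite[23.1]{wisbauerfoundations}, so what you have written is essentially a self-contained reconstruction of the standard textbook proof rather than an alternative to anything in the paper. Both directions are sound: in $(2)\Rightarrow(1)$ the choice of a maximal $L\supseteq\ker f$ with $N\not\subseteq L$, the identification $L\cap N=\ker f$ via maximality of $\ker f$ in $N$, and the induced map $M\to M/L\cong S$ restricting to $f$ are exactly right; in $(1)\Rightarrow(2)$ the reduction to $\operatorname{Rad}(M/N)=0$ is the standard device. The one step you should tighten is the extension of $R\bar x\to R\bar x/K$ along $R\bar x\hookrightarrow M/N$: the criterion you state only speaks of submodules of $M$ itself, so you are implicitly using the fact that an $M$-injective module is injective relative to every factor module of $M$ (equivalently, injective in $\sigma[M]$ when it lies there), which is \cite[16.3]{wisbauerfoundations}. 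Either cite that, or avoid it altogether by pulling back: apply your criterion to the submodule $Rx+N\leq M$ and the composite $Rx+N\to R\bar x\to R\bar x/K$, extend to $g\colon M\to R\bar x/K$, note $N\subseteq\ker g$, and let $g$ factor through $M/N$; the rest of your argument is unchanged. With that adjustment the proof is complete and matches the Wisbauer argument the paper invokes.
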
 

\begin{prop}
Let $M$ be projective in $\sigma[M]$. If $M$ is co-semisimple then every fully invariant submodule of $M$ is idempotent.
\end{prop}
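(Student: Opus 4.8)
The claim is: if $M$ is projective in $\sm$ and co-semisimple, then every fully invariant submodule $N\leq_{FI}M$ is idempotent, i.e. $N_MN=N$. The plan is to exploit Proposition~\ref{1.11}: in a co-semisimple module every proper submodule is an intersection of maximal submodules. The product $N_MN$ is a fully invariant submodule of $M$ contained in $N$ (using \cite[Proposition 1.3]{PepeGab}), so it suffices to show that $N_MN$ is \emph{not} properly contained in $N$; equivalently, that no maximal submodule of $M$ containing $N_MN$ fails to contain $N$. So I would argue by contradiction: suppose $N_MN\lneq N$, and pick a maximal submodule $L$ of $M$ with $N_MN\leq L$ but $N\not\leq L$.

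The key step is then to analyze the simple module $M/L$. Since $L$ is maximal and $N\not\leq L$, fully invariance of $N$ gives $N+L=M$, hence $N_M M = N_M(N+L)$, and by the distributivity Lemma~\ref{1} (the first unnamed lemma in the excerpt) this equals $N_MN + N_ML$. Now $N_ML\leq L$ because... I would argue $N_ML\leq N\cap(\text{something})$; more directly, since $L$ is fully invariant?—no, $L$ need not be fully invariant. Instead I would use that $M/L$ is simple and hence, being in $\sm$, the natural map behaves well: consider any $f\in\Hom(M,M)$; since $M$ is projective and $M/L$ simple, either $f(M)\subseteq L$ or $f(M)+L=M$. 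The cleaner route: $N_MM$ is the image of all $f(N)$, $f\in\mathrm{End}_R(M)$, which all lie in $N$ (fully invariance), so $N_MM\leq N$. On the other hand, for a submodule $K\leq M$ one has the general identity $K_MM=K$ when $M$ is projective in $\sm$ and $K$ is fully invariant? Not in general — but here I want $N_MM = N$: indeed $N\subseteq N_MM$ since $\mathrm{id}_M(N)=N$, and $N_MM\subseteq N$ by fully invariance, so $N_MM=N$. Combining, $N = N_MM = N_M(N+L) = N_MN + N_ML$. It remains to show $N_ML\subseteq L$, which would force $N\subseteq L$ (as $N_MN\subseteq L$ too), the desired contradiction.

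So the heart of the argument is $N_ML\subseteq L$. Here I would use that $L$ is a maximal submodule and consider a generator $f\colon M\to L$ of the product $N_ML=\sum\{f(N)\mid f\in\Hom(M,L)\}$; for each such $f$, compose with the inclusion $L\hookrightarrow M$ to get $\bar f\in\mathrm{End}_R(M)$ with image in $L$, and then $f(N)=\bar f(N)\subseteq N\cap L$ again by fully invariance of $N$. Hence $N_ML\subseteq N\cap L\subseteq L$. This closes the contradiction: $N = N_MN + N_ML \subseteq L$, contradicting $N\not\leq L$. Therefore no such $L$ exists, so $N_MN$ is not contained in any maximal submodule missing $N$; since $N_MN\leq N$ and $N$ is an intersection of the maximal submodules containing it (Proposition~\ref{1.11}, if $N_MN$ were proper one could also invoke it directly), we conclude $N_MN=N$.

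The main obstacle I anticipate is handling the degenerate case where $N$ itself is not proper, i.e. $N=M$: then $N_MN=M_MM$, and one needs $M_MM=M$, which holds because $\mathrm{id}_M\in\mathrm{End}_R(M)$ gives $M\subseteq M_MM\subseteq M$. A second small subtlety is making sure the distributivity step is applied correctly — Lemma~\ref{1} gives $N_M(\sum X_i)=\sum(N_MX_i)$ for $N\leq M$ and a family of modules in $\sm$, and $M$ written as $N+L$ is a sum of two submodules, so the lemma applies and $N_M(N+L)=N_MN+N_ML$. With these points checked, the argument is complete.
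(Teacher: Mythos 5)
Your argument is correct, but it is genuinely different from the paper's. The paper disposes of the statement in three lines by quoting an external result (Remark 4.23 of the cited quantale paper): if $N_MN<N$ then $N_MN$ is an intersection of maximal submodules by Proposition \ref{1.11}, and the cited remark asserts directly that $N$ lies in every maximal submodule containing $N_MN$, whence $N\leq N_MN$. What you do instead is re-prove exactly that missing ingredient by hand: choosing a maximal $L$ with $N_MN\leq L$ but $N\not\leq L$ (such an $L$ exists by Proposition \ref{1.11}, as you note), you get $N+L=M$, then $N=N_MM=N_M(N+L)=N_MN+N_ML\leq L$, a contradiction; here $N_MM=N$ uses full invariance of $N$ plus the identity map, $N_ML\leq L$ is immediate from the definition of the product (your detour through $\bar f\in End_R(M)$ is unnecessary), and the middle equality is the distributivity lemma for the product, valid because $M$ is projective in $\sm$ — which is precisely where the projectivity hypothesis enters, just as it implicitly does in the cited remark. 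So your proof is self-contained where the paper's is citation-based, at the cost of a slightly longer argument; the degenerate case $N=M$ is handled correctly. One small bookkeeping point: the distributivity statement you invoke is the unnumbered lemma in the introduction, not Lemma \ref{1} of Section 2, so the cross-reference should be adjusted.
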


\begin{proof}
Let $N$ be a proper fully invariant submodule of $M$. Suppose $N^2< N$. Since $M$ is co-semisimple then, by Proposition \ref{1.11} $N^2=\bigcap_I\{\mathcal{M}_i\}$ with $\mathcal{M}<M$ maximal for all $i\in I$. By \cite[Remark 4.23]{mauquantale} $N\leq \bigcap_I\{\mathcal{M}_i\}=N^2$. Thus $N=N^2$.
\end{proof}

\begin{lemma}\label{13}
Let $M\in R$-Mod and projective in $\sigma %
\left[ M\right] $. If $M$ is a prime module and $0\neq N\varsubsetneq M$ is a
fully invariant submodule, then $N\subseteq _{e}M$.
\end{lemma}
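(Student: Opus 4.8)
The plan is to show that a nonzero fully invariant proper submodule $N$ of a prime module $M$ (with $M$ projective in $\sigma[M]$) is essential, by exhibiting, for every nonzero $L \leq M$, a nonzero element of $N \cap L$ via the product in $M$. The key observation is that the product $N_M L$ sits inside both $N$ and $L$: since $N$ is fully invariant, $N_M L = \sum\{f(N) \mid f \in \Hom(M,L)\} \leq N$, and trivially $N_M L \leq L$ because each $f(N) \leq L$. So it suffices to prove $N_M L \neq 0$.

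First I would reduce to showing $N_M L \neq 0$ for an arbitrary nonzero submodule $L \leq M$. The obstacle is that primeness as defined in the paper is a statement about fully invariant submodules, whereas $L$ need not be fully invariant. To get around this, I would pass to fully invariant closures: let $\overline{L}$ be the fully invariant submodule of $M$ generated by $L$, i.e. $\overline{L} = \sum\{g(L) \mid g \in End_R(M)\}$, which is nonzero since $L \neq 0$. Then apply primeness of $0$ in $M$ to the fully invariant submodules $N$ and $\overline{L}$: since $N \neq 0$ and $\overline{L} \neq 0$, we cannot have $N_M \overline{L} = 0$, hence $N_M \overline{L} \neq 0$. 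Now I would use Lemma~1 (the distributivity of the product over sums, valid since $M$ is projective in $\sigma[M]$) together with the fact that $M_M L = L$-type manipulations to transfer this back: $N_M \overline{L} = N_M\big(\sum_g g(L)\big) = \sum_g N_M(g(L))$, and since each $g(L) \leq M$ and the product $N_M(-)$ applied to a homomorphic image can be compared with $N_M L$ (using associativity of the product, which holds as $M$ is projective in $\sigma[M]$, via $N_M(g(L)) \subseteq (N_M M)_M \cdots$; more directly, $g(L) = g_M L$ in the appropriate sense is not quite right, so instead I would note $N_M(g(L))$ is a homomorphic image consideration). The cleanest route: since $\overline{L} \leq M$ and $N_M \overline{L} \neq 0$, and $\overline{L}$ is generated by images of $L$, associativity gives $N_M \overline{L} = N_M(\sum g(L))$; if $N_M L = 0$ then $N_M(g(L)) = g(N_M L)$-type vanishing would force $N_M \overline{L} = 0$, a contradiction. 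Hence $N_M L \neq 0$.

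Concretely, the main obstacle I expect is justifying that $N_M L = 0$ implies $N_M(g(L)) = 0$ for every $g \in End_R(M)$; this is where projectivity of $M$ in $\sigma[M]$ (hence associativity of the product, \cite[Proposition 5.6]{beachy2002m}) is essential, since $N_M(g(L))$ can be rewritten using that $g$ is induced by an endomorphism of $M$ and the product is compatible with composition. Once that is in hand, $N_M \overline{L} = \sum_g N_M(g(L)) = 0$, contradicting primeness applied to the nonzero fully invariant submodules $N$ and $\overline{L}$. Therefore $N_M L \neq 0$, so $0 \neq N_M L \leq N \cap L$, proving $N \leq_e M$.

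An alternative and perhaps slicker approach avoids fully invariant closures entirely: observe that $Ann_M(L)$ in the sense of the paper is fully invariant, and $N_M L = 0$ would give $N \leq Ann^r_M(L)$; but for $M$ prime one shows $Ann^r_M(L) = 0$ whenever $L \neq 0$ — indeed if $Ann^r_M(L) \neq 0$ then, taking fully invariant closures, primeness is violated since $L_M Ann^r_M(L) = 0$. I would likely present whichever of these is shortest given the lemmas already available; both hinge on the same point, that the product interacts well with endomorphisms because $M$ is projective in $\sigma[M]$.
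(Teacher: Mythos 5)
The paper states this lemma without proof (it is quoted as a known fact and only used later, in Proposition \ref{1.12}), so there is no in-paper argument to compare against; judged on its own, your main argument is correct and is the standard one. The two pillars are exactly right: (i) $N_ML\leq N\cap L$ because every $f\in\Hom(M,L)$ can be viewed as an endomorphism of $M$, so $f(N)\leq N$ by full invariance, while $f(N)\leq L$ trivially; (ii) $N_ML\neq 0$ for every $0\neq L\leq M$, obtained by passing to the fully invariant closure $\overline{L}=\sum\{g(L)\mid g\in End_R(M)\}$ and applying primeness to the pair $N,\overline{L}$. The one step you leave half-finished -- that $N_ML=0$ forces $N_M\,g(L)=0$ -- does hold and deserves to be written out: given $f\in\Hom(M,g(L))$, projectivity of $M$ in $\sigma[M]$ lets you lift $f$ through the epimorphism $g|_L:L\to g(L)$ to some $f':M\to L$, whence $f(N)=g(f'(N))\leq g(N_ML)=0$; equivalently one can invoke \cite[Lemma 5.4]{beachy2002m} (compatibility of the product with homomorphic images), which the paper itself uses in the proof of Proposition \ref{1.12}. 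Combined with Lemma 1 (distributivity of the product over sums, again needing projectivity), this gives $N_M\overline{L}=\sum_g N_M\,g(L)=0$, contradicting primeness since $N\neq0$ and $\overline{L}\supseteq L\neq0$. One caution about your ``slicker'' alternative: the annihilator direction is reversed there -- $N_ML=0$ gives $L\leq Ann^r_M(N)$, not $N\leq Ann^r_M(L)$ -- and $Ann^r_M$ is only well behaved for semiprime $M$ or after passing to fully invariant closures, so as sketched that variant does not go through; stick with the first argument.
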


\begin{prop}\label{1.12}
Let $M$ be projective in $\sigma[M]$. If $M$ is a prime Goldie module and co-semisimple then $M$ is $FI$-simple.
\end{prop}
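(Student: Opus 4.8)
The plan is to show that a prime Goldie module that is co-semisimple can have no nonzero proper fully invariant submodule. Suppose, for contradiction, that $0\neq N\varsubsetneq M$ is a fully invariant submodule of $M$. The key structural fact to exploit is the interplay between three properties: primeness forces $N$ to be essential in $M$ (Lemma \ref{13}), co-semisimplicity forces $N$ to be idempotent, i.e. $N_MN=N$ (the proposition just proved above, using that $M$ is projective in $\sigma[M]$), and the Goldie hypothesis gives finite uniform dimension plus ACC on left annihilators, which is what lets us deduce a contradiction from having a "small" fully invariant piece.

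First I would record that since $M$ is prime and $N\neq 0$ is fully invariant, $Ann_M(N)=0$; equivalently $Ann^r_M(N)=0$ because $M$ is semiprime (being prime). Then I would look at the right annihilator condition the other way: because $M$ is a prime Goldie module, its only minimal prime submodule is $0$, and the machinery of Section 2 applies with $n=1$, so $\widehat{M}$ is indecomposable-homogeneous and $End_R(\widehat M)$ is simple right artinian. The crucial point is that $N\leq_e M$ implies $\widehat N=\widehat M$, so $N$ and $M$ have the same $M$-injective hull. Now I would use co-semisimplicity in the sharpest form: every proper submodule of $M$ is an intersection of maximal submodules (Proposition \ref{1.11}). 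In particular $N$, being proper, is contained in some maximal submodule $\mathcal{M}<M$, and $M/\mathcal{M}$ is a simple module in $\sigma[M]$, hence $M$-injective by co-semisimplicity, hence $\mathcal{M}$ is a direct summand of $M$. But $\mathcal{M}\supseteq N$ and $N\leq_e M$, so $\mathcal{M}\leq_e M$; a proper essential direct summand is impossible. This forces $N$ to equal $M$ — contradiction — so no such $N$ exists and $M$ is $FI$-simple.

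Let me reconsider whether the "$N$ proper $\Rightarrow$ $N$ contained in a maximal submodule" step is legitimate: Proposition \ref{1.11} says any proper submodule is an \emph{intersection} of maximal submodules, so in particular the intersection is nonempty, i.e. there is at least one maximal submodule above $N$. That is exactly what I need. Then a maximal submodule $\mathcal{M}$ gives a simple quotient $M/\mathcal{M}\in\sigma[M]$, which by co-semisimplicity is $M$-injective, so the inclusion $\mathcal{M}\hookrightarrow M$ splits and $M=\mathcal{M}\oplus C$ with $C$ simple. Since $N\leq\mathcal{M}$ and $N\leq_e M$ we get $C\cap N=0$ with $N$ essential, forcing $C=0$, i.e. $\mathcal{M}=M$, contradicting maximality (properness). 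Hence $M$ has no nonzero proper fully invariant submodule.

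The main obstacle — and the place where I expect the write-up to need care — is justifying that a nonzero fully invariant $N$ is essential: this is precisely Lemma \ref{13}, which the excerpt states without proof, so I would simply invoke it. A secondary subtlety is that one must apply co-semisimplicity to $M/\mathcal{M}$ as an object of $\sigma[M]$, which is fine since $M/\mathcal{M}\in\sigma[M]$ automatically; and that the splitting of $\mathcal{M}\hookrightarrow M$ follows from $M/\mathcal{M}$ being $M$-injective together with $M\in\sigma[M]$. Everything else (projectivity in $\sigma[M]$, the Goldie hypotheses) is used only to guarantee we are in a setting where Lemma \ref{13} and the earlier idempotence proposition apply; the Goldie assumption itself is, in this argument, used rather lightly — essentially just to stay inside the class of modules the preceding lemmas were proved for.

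\begin{proof}
Suppose $M$ is not $FI$-simple, so there is a fully invariant submodule $N$ with $0\neq N\varsubsetneq M$. Since $M$ is a prime module and $N$ is a nonzero proper fully invariant submodule, Lemma \ref{13} gives $N\leq_e M$.

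Because $N$ is a proper submodule of $M$ and $M$ is co-semisimple, Proposition \ref{1.11} shows that $N$ is an intersection of maximal submodules of $M$; in particular there exists a maximal submodule $\mathcal{M}<M$ with $N\leq\mathcal{M}$. The quotient $M/\mathcal{M}$ is a simple module in $\sigma[M]$, hence $M$-injective by co-semisimplicity. Since $M\in\sigma[M]$, the exact sequence $0\to\mathcal{M}\to M\to M/\mathcal{M}\to 0$ splits, so $M=\mathcal{M}\oplus C$ for some submodule $C\cong M/\mathcal{M}$.

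Now $N\leq\mathcal{M}$ implies $C\cap N\leq\mathcal{M}\cap C=0$. But $N\leq_e M$, so $C=0$, that is $\mathcal{M}=M$, contradicting that $\mathcal{M}$ is a proper submodule. Therefore no such $N$ exists and $M$ is $FI$-simple.
\end{proof}
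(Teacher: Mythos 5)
There is a genuine gap at the decisive step. You argue that since $M/\mathcal{M}$ is simple, hence $M$-injective by co-semisimplicity, the exact sequence $0\to\mathcal{M}\to M\to M/\mathcal{M}\to 0$ splits. That implication is invalid: injectivity of a module splits monomorphisms \emph{out of} it (it becomes a direct summand of any module containing it), not epimorphisms \emph{onto} it; to split this particular sequence you would need $M/\mathcal{M}$ to be projective relative to the quotient map, or $\mathcal{M}$ itself to be $M$-injective, neither of which is available. Concretely, let $R$ be a Cozzens domain (a simple left noetherian left $V$-domain, hence a prime Goldie co-semisimple module over itself, projective in $\sigma[R]=R$-Mod): for every maximal left ideal $\mathcal{M}$ the simple module $R/\mathcal{M}$ is injective, yet $\mathcal{M}$ is never a direct summand of $_RR$, since a domain has no nontrivial idempotents. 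Your argument, run on any nonzero (automatically essential) left ideal contained in such an $\mathcal{M}$, would conclude that $\mathcal{M}$ is a summand, which is false; so the step is not merely under-justified but wrong. The fact that in your write-up the Goldie hypothesis is, as you say, ``used rather lightly'' is the symptom: the paper needs it in an essential way precisely to replace this step.

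The paper's proof uses the injectivity of $S=M/\mathcal{M}$ in the correct direction. Since $M$ is a semiprime Goldie module it is essentially compressible, so from $N\leq_e M$ (your Lemma \ref{13} step, which is fine) one obtains a monomorphism $f:M\to N$. Next, $N_M(M/N)=0$ and the epimorphism $M/N\to M/\mathcal{M}$ give $N_MS=0$, i.e. every homomorphism $g:M\to S$ satisfies $g(N)=0$. Now injectivity of $S$ in $\sigma[M]$ is applied to extend the canonical projection $\pi:M\to S$ along the monomorphism $f$, yielding $g:M\to S$ with $gf=\pi$; since $f(M)\leq N$ and $g(N)=0$, this forces $\pi(M)=0$, contradicting $S\neq 0$. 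To repair your proof you would have to import these two ingredients (the embedding $M\hookrightarrow N$ coming from the Goldie hypothesis, and the vanishing $N_MS=0$) or find a substitute for the false splitting claim.
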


\begin{proof}
Let $0\neq N$ be a fully invariant submodule of $M$. By Lemma \ref{13}, $N\leq_e M$. By \cite[Theorem 2.8]{maugoldie} there exists a monomorphism $f:M\to N$. If $N$ is proper in $M$ there exists a maximal submodule $\mathcal{M}< M$ such that $N\leq \mathcal{M}$. Hence, we have a epimorphism $h:M/N\to M/\mathcal{M}\cong S$ with $S$ a simple module. By \cite[Lemma 5.4]{beachy2002m} $h(N_M(M/N))=N_MS$. Since $N_M(M/N)=0$ by \cite[Proposition 1.8]{PepeGab}, then $N_MS=0$, i.e., $g(N)=0$ for all $g\in Hom_R(M,S)$. 

Since $S$ is injective in $\sigma[M]$, there exists $g:M\to S$ such that $gf=\pi$ where $\pi:M\to M/\mathcal{M}$ is the canonical projection. Hence $\pi(M)=gf(M)\leq g(N)=0$. Contradiction. Thus $M=N$.
\end{proof}

\begin{rem}
Notice that by \cite[Remark 4.23]{mauquantale} every fully invariant submodule of $M$ is semiprime in $M$. In particular $M$ is a semiprime module. Hence if $M$ is co-semisimple and Goldie module, by \cite[Theorem 2.2]{mauacc} $M$ has finitely many minimal prime in $M$ submodules.
\end{rem}

\begin{cor}\label{1.13}
Let $M$ be projective in $\sigma[M]$. If $M$ is co-semisimple and a Goldie module then the maximal fully invariant submodules of $M$ are the minimal prime in $M$ submodules. Moreover, if $P_1,...,P_n$ are the minimal prime in $M$ submodules then $Spec(M)=\{P_1,...,P_n\}$. 
\end{cor}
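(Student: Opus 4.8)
The plan is to combine Proposition \ref{1.12} with a passage to the quotients $M/P_i$, using the machinery already built in Section 2. First I would observe that the minimal prime in $M$ submodules $P_1,\dots,P_n$ exist by the Remark preceding this statement (a co-semisimple projective module is semiprime, so \cite[Theorem 2.2]{mauacc} applies). Next I would recall that each $M/P_i$ is a prime module (by Remark \ref{1.5} its zero submodule is prime, since $P_i$ is prime in $M$), is projective in $\sigma[M/P_i]$ (as $P_i$ is fully invariant), and is a Goldie module (by Corollary \ref{5}, since $M$ is Goldie). One also checks $M/P_i$ is co-semisimple: every simple in $\sigma[M/P_i]\subseteq\sigma[M]$ is $M$-injective hence $M/P_i$-injective. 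Therefore Proposition \ref{1.12} gives that each $M/P_i$ is $FI$-simple.

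Now for the main claim. Let $N\leq_{FI}M$ be a maximal proper fully invariant submodule. Since $M$ is semiprime, $N$ is a semiprime submodule, and more is true: by \cite[Remark 4.23]{mauquantale} (already invoked in the preceding Remark) every fully invariant submodule of $M$ is semiprime in $M$, and semiprime submodules are intersections of the prime submodules above them. Concretely, I would argue that $N=\bigcap\{P : N\leq P,\ P \text{ prime in } M\}$, and since $N$ is maximal fully invariant and the $P_i$ are the \emph{minimal} primes, the only way this intersection can be a maximal proper fully invariant submodule is for it to equal a single $P_i$; indeed any prime $P$ in $M$ contains some minimal prime $P_i$, and the chain $P_i\leq P\leq M$ together with $FI$-simplicity of $M/P_i$ (which forbids a proper fully invariant submodule strictly between $P_i$ and $M$) forces $P=P_i$ whenever $P$ is proper. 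Hence every prime in $M$ is one of the $P_i$, which simultaneously yields $Spec(M)=\{P_1,\dots,P_n\}$ and shows that the maximal proper fully invariant submodules are exactly among the $P_i$. Conversely each $P_i$ \emph{is} maximal among fully invariant submodules because $M/P_i$ is $FI$-simple; and the $P_i$ are pairwise incomparable (distinct minimal primes), so none is contained in another. This gives the two assertions.

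The step I expect to be the main obstacle is pinning down precisely that a proper fully invariant submodule of $M$ equals the intersection of the primes in $M$ containing it — i.e.\ that "semiprime in $M$" has the expected intersection-of-primes description in this module-theoretic setting — and, relatedly, that there are no primes in $M$ other than the minimal ones once all $M/P_i$ are $FI$-simple. The cleanest route is: given any prime $P<M$, pick a minimal prime $P_i\leq P$ (these exist by \cite[Theorem 2.2]{mauacc} / the structure of minimal primes); then $P/P_i$ is a fully invariant submodule of the $FI$-simple module $M/P_i$, so $P/P_i=0$, i.e.\ $P=P_i$. This avoids needing a general intersection theorem and makes the argument self-contained from the results quoted above. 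Once $Spec(M)=\{P_1,\dots,P_n\}$ is established, identifying the maximal fully invariant submodules with the $P_i$ is immediate from $FI$-simplicity of the quotients and incomparability of distinct minimal primes.
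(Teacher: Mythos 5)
Your argument is correct, and its core coincides with the paper's: both proofs rest on applying Proposition \ref{1.12} to the quotients $M/P_i$ (prime, Goldie, projective in $\sigma[M/P_i]$, co-semisimple) to get $FI$-simplicity, and then passing fully invariant submodules through the quotient to see that each $P_i$ is maximal fully invariant and that any prime containing $P_i$ collapses to $P_i$. Where you diverge is in the converse inclusion: to show a maximal fully invariant $N<M$ is a minimal prime, the paper argues directly that $M/N$ is $FI$-simple, hence a prime module, hence $N$ is prime in $M$ by \cite[Proposition 18]{raggiprime}, whereas you use that every fully invariant submodule is semiprime \cite[Remark 4.23]{mauquantale} together with the intersection-of-primes description of semiprime submodules \cite[Proposition 1.11]{maugoldie} (the same pair of facts the paper uses for Corollary \ref{1.14}), and then invoke maximality to identify $N$ with a single $P_i$. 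The paper's route is a bit shorter and avoids the intersection theorem; yours has the merit of making the $Spec(M)=\{P_1,\dots,P_n\}$ assertion fully explicit (every prime contains a minimal prime, and $FI$-simplicity of $M/P_i$ forces equality), a step the paper leaves implicit. One small caveat: the sentence ``since $M$ is semiprime, $N$ is a semiprime submodule'' is not an implication of semiprimeness of $M$ alone; it is the co-semisimplicity of $M$ via \cite[Remark 4.23]{mauquantale} that gives it, which you do cite immediately afterwards, so no real gap remains.
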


\begin{proof}
Let $P$ be a minimal prime in $M$. Then $M/P$ is a prime Goldie module by Corollary \ref{4} and it is co-semisimple. By Proposition \ref{1.12} $M/P$ is $FI$-simple. 

Let $N\leq_{FI}M$ such that $P<N$. By \cite[Lemma 17]{raggiprime} $N/P\leq_{FI} M/P$, then $N=M$. Thus $P$ is a maximal fully invariant submodule. 

Now, if $N<M$ is a maximal fully invariant submodule then by \cite[Lemma 17]{raggiprime} $M/N$ is $FI$-simple. Since $M/N$ is $FI$-simple then $M/N$ is a prime module. By \cite[Proposition 18]{raggiprime} $N$ is prime in $M$.
\end{proof}

\begin{cor}\label{1.14}
Let $M$ be projective in $\sigma[M]$. If $M$ is co-semisimple and a Goldie module then for every fully invariant submodule $N\leq M$, the factor module $M/N$ is a semiprime Goldie module. 
\end{cor}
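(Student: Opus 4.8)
The plan is to reduce to the case already handled in Corollary~\ref{1.13}, namely that the minimal prime in $M$ submodules $P_1,\dots,P_n$ are precisely the maximal fully invariant submodules of $M$, and that $\mathrm{Spec}(M)=\{P_1,\dots,P_n\}$. Fix a fully invariant submodule $N\leq M$. First I would dispose of the trivial cases: if $N=M$ there is nothing to prove, and if $N=0$ then $M$ itself is a semiprime Goldie module (it is semiprime because every fully invariant submodule of $M$ is semiprime in $M$ by \cite[Remark 4.23]{mauquantale}, and it is Goldie by hypothesis). So assume $0\neq N<M$.

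The key step is to identify $N$ as an intersection of minimal primes. Since $M$ is co-semisimple and $N$ is a proper submodule, by Proposition~\ref{1.11} $N$ is an intersection of maximal submodules of $M$, say $N=\bigcap_{i\in I}\mathcal{M}_i$ with each $\mathcal{M}_i<M$ maximal. But $N$ is fully invariant, and by \cite[Remark 4.23]{mauquantale} the fully invariant core of each $\mathcal{M}_i$ is a prime (indeed semiprime, hence prime as it is the annihilator-type core of a maximal submodule) in $M$ submodule containing $N$; alternatively, and more cleanly, I would argue that $N$ being fully invariant forces $N=\bigcap\{P\in\mathrm{Spec}(M)\mid N\leq P\}$. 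Here is where I invoke Corollary~\ref{1.13}: $\mathrm{Spec}(M)=\{P_1,\dots,P_n\}$ is finite, so the set $J=\{j\mid N\leq P_j\}$ is a subset of $\{1,\dots,n\}$ and we get $N\leq\bigcap_{j\in J}P_j$. For the reverse inclusion, note that $\bigcap_{j\in J}P_j$ is fully invariant and by Corollary~\ref{1.13} each $P_j$ is a \emph{maximal} fully invariant submodule; since $N$ is fully invariant and proper, $N$ is contained in some maximal fully invariant submodule, i.e.\ in some $P_j$, so $J\neq\emptyset$, and one checks that the minimality of each $P_j$ together with the fact that $M$ is semiprime (so $0=\bigcap_i P_i$ and the $P_j$ are the only primes) gives $N=\bigcap_{j\in J}P_j$. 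In fact the cleanest route: $N$ fully invariant proper $\Rightarrow$ $N$ semiprime in $M$ by \cite[Remark 4.23]{mauquantale}, and a semiprime submodule is an intersection of the minimal primes over it (\cite{raggisemiprime}), which since $\mathrm{Spec}(M)$ is finite is exactly $\bigcap_{j\in J}P_j$ with $\emptyset\neq J\subseteq\{1,\dots,n\}$.

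Once $N=\bigcap_{j\in J}P_j$ is established, the conclusion is immediate from Proposition~\ref{1.4}: $M$ is projective in $\sigma[M]$ and semiprime (again by \cite[Remark 4.23]{mauquantale}), it is a Goldie module by hypothesis, $P_1,\dots,P_n$ are its minimal primes, and $\emptyset\neq J\subseteq\{1,\dots,n\}$, so $M/N$ is a semiprime Goldie module.

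I expect the only real obstacle to be the second step, pinning down precisely that a proper fully invariant submodule of a co-semisimple Goldie module is an intersection of minimal primes. The subtlety is that Proposition~\ref{1.11} gives an intersection of maximal \emph{submodules}, not of prime (= maximal fully invariant) submodules, and one must pass from one to the other; this is where the finiteness of $\mathrm{Spec}(M)$ from Corollary~\ref{1.13} and the characterization of semiprime submodules as intersections of minimal primes over them do the work. Everything after that is a direct citation of Proposition~\ref{1.4}.
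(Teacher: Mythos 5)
Your ``cleanest route'' is exactly the paper's proof: $N$ fully invariant implies $N$ is semiprime in $M$ by \cite[Remark 4.23]{mauquantale}, a semiprime submodule is the intersection of the primes containing it, Corollary \ref{1.13} identifies all primes with the finitely many minimal ones, and Proposition \ref{1.4} finishes. The preliminary detour through Proposition \ref{1.11} and maximal submodules is unnecessary (and its passage from maximal submodules to primes is the only shaky part), but since you discard it for the argument the paper itself uses, the proposal is correct.
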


\begin{proof}
By \cite[Remark 4.23]{mauquantale} every fully invariant submodule $N$ of a co-semisimple module $M$ is semiprime in $M$. Let $N\leq M$. Since $N$ is semiprime in $M$, by \cite[Proposition 1.11]{maugoldie} 
\[N=\bigcap\{P\leq M|N\leq P\;\text{and}\;P\;\text{prime in}\;M\}\]

By Corollary \ref{1.13}, the prime in $M$ submodules are precisely the minimal ones. So $N$ is an intersection of minimal primes. By Proposition \ref{1.4} $M/N$ is a semiprime Goldie module.
\end{proof}

\begin{cor}\label{1.15}
Let $M$ be projective in $\sigma[M]$. If $M$ is co-semisimple and a Goldie module then $M$ is isomorphic to a finite product of $FI$-simple prime Goldie modules.
\end{cor}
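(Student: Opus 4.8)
The plan is to exploit the decomposition machinery already established. By the Remark preceding Corollary \ref{1.13}, a co-semisimple Goldie module $M$ is semiprime, and by \cite[Theorem 2.2]{mauacc} it has finitely many minimal prime in $M$ submodules $P_1,\dots,P_n$. First I would invoke \cite[Corollary 1.15]{maugoldie} to get $\bigcap_{i=1}^n P_i = 0$, which immediately gives the monomorphism $\Psi\colon M\hookrightarrow M/P_1\oplus\cdots\oplus M/P_n$ of Proposition \ref{3}; in fact that proposition tells us $\mathrm{Im}\,\Psi \leq_e \bigoplus_i M/P_i$.

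The key step is to upgrade this essential embedding to an isomorphism. Here I would use Corollary \ref{1.13}: each $P_i$ is a maximal fully invariant submodule, and $M/P_i$ is a prime Goldie module (by Corollary \ref{4}) which is co-semisimple, hence $FI$-simple by Proposition \ref{1.12}. Now the point is that each $M/P_i$, being co-semisimple and a Goldie module with zero radical-type behaviour, should be shown to be $M$-injective — or, more directly, one argues that $\mathrm{Im}\,\Psi$ is a direct summand. The cleanest route: since $M/P_i$ is $FI$-simple and prime Goldie, and co-semisimple, one shows $M/P_i$ has no proper essential fully invariant submodule other than those forced to be everything, and more relevantly that $M$ itself, being co-semisimple, has the property that every essential submodule containing... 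Actually the simplest argument is that $\Psi$ splits because $\bigoplus_i M/P_i \in \sigma[M]$ and each $M/P_i$ is $M$-injective: a co-semisimple prime Goldie module that is $FI$-simple is in particular a module all of whose simple subfactors are injective, but to conclude $M/P_i$ itself is injective I would note $M/P_i$ being $FI$-simple and co-semisimple forces its only fully invariant submodules to be $0$ and itself, and then use that a semiprime Goldie module which is $FI$-simple and co-semisimple has $\widehat{M/P_i} \in \sigma[M/P_i]$ with $M/P_i \leq_{FI} \widehat{M/P_i}$ essential — so if $M/P_i$ were proper in its hull, Proposition \ref{1.11} applied to $\widehat{M/P_i}$ (or to $M$) would yield a maximal submodule, contradicting co-semisimplicity the same way as in the proof of Proposition \ref{1.12}.

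Granting that each $M/P_i$ is $M$-injective, the essential monomorphism $\Psi$ must be onto: $\mathrm{Im}\,\Psi$ is then an injective (hence closed) essential submodule of $\bigoplus_i M/P_i$, so $\mathrm{Im}\,\Psi = \bigoplus_i M/P_i$. Therefore
\[
M \cong \frac{M}{P_1}\oplus\cdots\oplus\frac{M}{P_n},
\]
and each summand $M/P_i$ is a prime Goldie module (Corollary \ref{4}) that is co-semisimple and $FI$-simple (Corollary \ref{1.13}, Proposition \ref{1.12}). This is exactly the asserted finite-product decomposition into $FI$-simple prime Goldie modules.

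The main obstacle I anticipate is justifying that each $M/P_i$ is $M$-injective (equivalently, that $\Psi$ splits). The $FI$-simplicity and co-semisimplicity of $M/P_i$ are in hand, but turning these into injectivity of $M/P_i$ within $\sigma[M]$ requires care: one must rule out that $M/P_i$ sits properly inside its $M$-injective hull. I expect this to go through by the same mechanism as Proposition \ref{1.12} — if $M/P_i \lneq E^{[M]}(M/P_i)$, then since $M/P_i \leq_e E^{[M]}(M/P_i)$ there is a maximal submodule of the hull containing $M/P_i$ strictly, and composing the splitting of that simple quotient (which exists by co-semisimplicity, as the simple lies in $\sigma[M]$) with the inclusion produces a nonzero map killing the essential submodule $M/P_i$, which is impossible. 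Pinning down that last contradiction cleanly, using the product formula and \cite[Lemma 5.4]{beachy2002m} as in Proposition \ref{1.12}, is the delicate part; everything else is bookkeeping with the earlier corollaries.
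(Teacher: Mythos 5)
Your frame (the diagonal map $\Psi\colon M\hookrightarrow M/P_1\oplus\cdots\oplus M/P_n$, each $M/P_i$ prime Goldie, co-semisimple, hence $FI$-simple by Proposition \ref{1.12}) agrees with the paper up to the decisive step, but the way you try to make $\Psi$ onto has a genuine gap, in two places. First, even if every $M/P_i$ were $M$-injective, that would only make the \emph{codomain} $\bigoplus_i M/P_i$ injective in $\sigma[M]$; it would not make $\mathrm{Im}\,\Psi\cong M$ injective or closed, so at best you would conclude $\bigoplus_i M/P_i\cong\widehat{M}$, not $\mathrm{Im}\,\Psi=\bigoplus_i M/P_i$ (a monomorphism into an injective module splits only when its \emph{domain} is injective, and nothing here makes $M$ quasi-injective). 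Second, the lemma you hope to prove --- that a co-semisimple prime Goldie $FI$-simple module is injective in $\sigma[M]$ --- is false: a Cozzens domain $R$ (see the remark after Corollary \ref{2.8} and \cite{cozzenshomological}) is a simple left noetherian $V$-domain, hence a co-semisimple prime Goldie $FI$-simple module over itself, yet $_RR$ is not self-injective (its injective hull is the quotient division ring). In that example your intended contradiction evaporates: the hull does admit a maximal submodule $\mathcal{M}$ containing $R$, and the induced nonzero map onto the simple injective quotient does kill the essential submodule $R$ --- but nothing forbids this, since a quotient of a non $M$-singular module by an essential submodule is simply $M$-singular. The mechanism of Proposition \ref{1.12} cannot be transplanted, because there the contradiction came from the monomorphism $M\to N$ into the fully invariant submodule, which has no counterpart for $M/P_i$ inside its hull.

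The paper's proof needs no injectivity at all: by Corollary \ref{1.13} each $P_i$ is a \emph{maximal fully invariant} submodule of $M$, and since the minimal primes are distinct, $\bigcap_{j\neq i}P_j\not\subseteq P_i$ (otherwise primeness of $P_i$ would force some $P_j\subseteq P_i$ with $j\neq i$); hence the fully invariant submodule $P_i+\bigcap_{j\neq i}P_j$ strictly contains $P_i$ and must equal $M$. This comaximality, together with $\bigcap_{i=1}^n P_i=0$, makes $\Psi$ surjective by the Chinese-remainder computation already carried out in the proof of Proposition \ref{3}: writing $m_i=x_i+y_i$ with $x_i\in P_i$ and $y_i\in\bigcap_{j\neq i}P_j$, the element $m=y_1+\cdots+y_n$ satisfies $\Psi(m)=(m_1+P_1,\dots,m_n+P_n)$. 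Replacing your injectivity step by this purely lattice-theoretic argument repairs the proof; the rest of your bookkeeping (which summands are prime Goldie and $FI$-simple) is fine.
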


\begin{proof}
Suppose that $P_1,...,P_n$ are the minimal prime in $M$ submodules. We have a monomorphism 
\[\varphi:M\to M/P_1\oplus...\oplus M/P_n\]
By Corollary \ref{1.13} each $P_i$ is a maximal fully invariant submodule, so $P_i+(\bigcap_{j\neq{i}}P_j)=M$. Therefore, $\varphi$ is onto. Thus $\varphi$ is an isomorphism.
\end{proof}

\begin{prop}\label{1.16}
Let $M$ progenerator in $\sigma[M]$. Suppose $M$ is co-semisimple and a Goldie module. If $E$ is an indecomposable injective module in $\sigma[M]$ such that $E$ embeds in $\widehat{M}$ then $E$ is $FI$-simple.
\end{prop}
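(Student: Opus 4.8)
The plan is to use the decomposition of $\widehat{M}$ from Corollary \ref{4}.\textit{1} together with Proposition \ref{1.16}'s hypothesis that $E$ embeds in $\widehat{M}$. Since $\widehat{M}\cong\E(M/P_1)\oplus\cdots\oplus\E(M/P_n)$ and each $\E(M/P_i)\cong\E(N_i)\cong E_i^{k_i}$ with $E_i$ indecomposable injective (following the proof of \cite[Theorem 2.20]{mauacc} as in Proposition \ref{51}), the Krull-Remak-Schmidt-Azumaya theorem forces $E\cong E_i$ for some $1\leq i\leq n$, because $E$ is an indecomposable injective direct summand of $\widehat{M}$. Hence it suffices to show each $E_i$ is $FI$-simple.

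The key reduction is that $\E(M/P_i)=E_i^{k_i}$ is, by Proposition \ref{8}.\textit{2} (using that $M$ is a progenerator here), equal to $\widehat{M/P_i}$, the $(M/P_i)$-injective hull of $M/P_i$. Now $M/P_i$ is a prime Goldie module by Corollary \ref{4}, and since $M$ is co-semisimple so is $M/P_i$ (a factor of a co-semisimple module is co-semisimple, as $\sigma[M/P_i]\subseteq\sigma[M]$). By Proposition \ref{1.12}, $M/P_i$ is $FI$-simple. So the remaining task is: if $L$ is an $FI$-simple prime Goldie module, progenerator in $\sigma[L]$, and co-semisimple, then every indecomposable injective summand of $\widehat{L}$ is $FI$-simple. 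Since $L$ is prime, by \cite[Proposition 2.21]{mauacc}-type arguments $\widehat{L}$ is $\widehat{L}$-homogeneous, i.e. $\widehat{L}=E^{k}$ for a single indecomposable injective $E$; and $\widehat L$ is itself $FI$-simple because any nonzero fully invariant submodule would meet $L$ in a nonzero fully invariant submodule of $L$, hence all of $L$, hence would be essential and $L$-injective, hence all of $\widehat L$.

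It then remains to pass from $FI$-simplicity of $\widehat L = E^k$ to that of $E$ itself. Here I would argue that $\operatorname{End}_R(\widehat L)$ is simple artinian (Remark \ref{9}/Theorem \ref{10}), say $\cong M_k(D)$ for a division ring $D$, with $D\cong\operatorname{End}_R(E)$; a fully invariant submodule of $E$ corresponds to an $\operatorname{End}_R(E)$-submodule that is $D$-stable, and one checks that a proper nonzero such submodule $F\leq E$ would produce $F^k$ as a proper nonzero fully invariant submodule of $E^k=\widehat L$, contradicting $FI$-simplicity of $\widehat L$. (Concretely: if $f\in\operatorname{End}_R(\widehat L)$ then writing $f$ as a matrix of maps $E\to E$, fully invariance of $F^k$ under all such $f$ reduces to fully invariance of $F$ under $\operatorname{End}_R(E)$.) Thus $E$ is $FI$-simple.

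The main obstacle I anticipate is the last step — cleanly transferring $FI$-simplicity across the direct power $E^k\to E$ — since fully invariant submodules of a power are governed by the endomorphism ring acting by matrices, and one must verify that no "partial" or "diagonal-twisted" fully invariant submodule of $E^k$ can occur other than the $F^k$ for $F\leq_{FI}E$; using that $\operatorname{End}_R(E)$ is a division ring (because $E$ is indecomposable injective with $\widehat L$ Goldie, so $\operatorname{End}$ is local and, being von Neumann regular by nonsingularity, is a division ring) should make this transparent, as then the only $\operatorname{End}_R(E)$-stable submodules relevant are $0$ and $E$.
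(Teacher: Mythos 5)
Your overall strategy does track the paper's: reduce via Corollary \ref{4} and Proposition \ref{51} to $E^{k}\cong\E(M/P)$ for a minimal prime $P$, invoke Proposition \ref{1.12} to get $M/P$ $FI$-simple, and then argue that a nonzero fully invariant submodule sitting essentially over $M/P$ inside its injective hull must be the whole hull. The genuine gap is the phrase ``hence would be essential and $L$-injective, hence all of $\widehat{L}$'': you never justify ``$L$-injective'', and it is not automatic. A fully invariant submodule of an injective module is in general only quasi-injective, not injective (for instance, every proper nonzero subgroup $\mathbb{Z}_{p^{n}}$ of the injective $\mathbb{Z}$-module $\mathbb{Z}_{p^{\infty}}$ is fully invariant and essential, yet not injective), so essentiality plus full invariance alone cannot close the argument. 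This missing step is exactly where the paper's proof does its real work: for $0\neq N\leq_{FI}E$ one passes to $N^{k}\leq_{FI}E^{k}=\E(M/P)$, which contains $M/P$ and is therefore essential, hence fully invariant in its own $M$-injective hull and thus quasi-injective; since it is also non $M$-singular, Theorem \ref{1.9} (requiring $M$ progenerator, semiprime Goldie) upgrades quasi-injective to $M$-injective, giving $N^{k}=E^{k}$ and so $N=E$. Your argument is repairable the same way, but note that since you work inside $\sigma[M/P_i]$ you would need Theorem \ref{1.9} for $L=M/P_i$, which requires first checking that $M/P_i$ is again a progenerator in $\sigma[M/P_i]$ (projectivity passes to quotients by fully invariant submodules; generation follows because every $X\in\sigma[M/P_i]$ satisfies ${P_i}_{M}X=0$, so maps $M\to X$ factor through $M/P_i$). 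The paper sidesteps this by applying Theorem \ref{1.9} directly in $\sigma[M]$.

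A smaller point: your final descent from $FI$-simplicity of $E^{k}$ to that of $E$ is easier than you anticipate, and the division-ring discussion is unnecessary. You do not need to classify the fully invariant submodules of $E^{k}$; you only need that a nonzero proper $F\leq_{FI}E$ yields $F^{k}$ nonzero, proper, and fully invariant in $E^{k}$, which is immediate since every endomorphism of $E^{k}$ is a matrix of endomorphisms of $E$, each preserving $F$. This is precisely the observation the paper uses (in the opposite direction, applied to $N^{k}$), so once the injectivity gap above is filled, the rest of your outline goes through.
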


\begin{proof}
Suppose that $P_1,...,P_n$ are the minimal prime in $M$ submodules. 
By Corollary \ref{4} and Proposition \ref{51} we have that 
\[\widehat{M}\cong \E(M/P_i)\oplus...\oplus \E(M/P_n)\]
with $\E(M/P_i)\cong E_i^{k_i}$ where $E_i$ is an indecomposable injective module in $\sigma[M]$ and $k_i>0$. By Proposition \ref{1.12} $M/P_i$ is $FI$-simple, so every fully invariant submodule of $\E(M/P_i)$ contains $M/P_i$.

Without lost of generality we can suppose $E^k=\E(M/P)$ for some $P\in\{P_1,...,P_n\}$. If $N\leq E$ is a fully invariant submodule then $N^k$ is a fully invariant submodule of $E^k$. Thus $M/P\leq_e N^k\leq_eE^k$. This implies that $\E(N^k)=\E(M/P)$. Therefore $N^k$ is a fully invariant submodule of its $M$-injective hull, so $N^k$ is quasi-injective.

Since $N$ is non $M$-singular, by Theorem \ref{1.9} $N^k$ is injective. Hence $N^k=E^k$ in particular $N$ is injective, thus $N=E$.
\end{proof}

\begin{cor}\label{1.17}
Let $M$ progenerator in $\sigma[M]$. Suppose $M$ is co-semisimple and a Goldie module, then every non $M$-singular indecomposable injective module is $FI$-simple.
\end{cor}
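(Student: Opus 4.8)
The plan is to reduce the statement to Proposition \ref{1.16}. Let $E$ be a non $M$-singular indecomposable injective module in $\sigma[M]$. The key observation is that, since $M$ is a progenerator in $\sigma[M]$, every module in $\sigma[M]$ is a quotient of a direct sum of copies of $M$; in particular the injective hull $\widehat{M}$ is a cogenerator for the non $M$-singular modules, and one expects $E$ to embed into some power $\widehat{M}^{(\Lambda)}$. First I would establish that $E$ actually embeds into $\widehat{M}$ itself: because $E$ is indecomposable injective and non $M$-singular, it is the $M$-injective hull of a uniform non $M$-singular submodule $U\leq E$; pulling back a nonzero map $M\to E$ along the epimorphism from a free object, one gets a nonzero map $f\colon M\to E$, and since $E$ is non $M$-singular the restriction $f$ is injective on a closed (hence by finite uniform dimension, a suitable) submodule, giving a copy of a uniform submodule of $E$ inside $M$, whose $M$-injective hull inside $\widehat{M}$ is then isomorphic to $E$.

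With $E\hookrightarrow\widehat{M}$ in hand, the hypotheses of Proposition \ref{1.16} are exactly met: $M$ is progenerator in $\sigma[M]$, co-semisimple and a Goldie module, and $E$ is an indecomposable injective in $\sigma[M]$ embedding in $\widehat{M}$. Hence $E$ is $FI$-simple, which is the assertion. So the proof reduces to: given a non $M$-singular indecomposable injective $E\in\sigma[M]$, produce an embedding $E\hookrightarrow\widehat{M}$.

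The main obstacle is precisely this embedding step. The subtlety is that "non $M$-singular" only tells us $\mathcal{Z}(E)=0$, and a priori $E$ need not be related to $\widehat{M}$ at all for a general subgenerator $M$; it is the progenerator hypothesis that forces $\mathrm{Hom}_R(M,E)\neq 0$ for every nonzero $E\in\sigma[M]$ (since $M$ generates $\sigma[M]$), and then non $M$-singularity promotes a nonzero map to an embedding of a uniform submodule. Concretely I would pick $0\neq f\in\mathrm{Hom}_R(M,E)$; then $\mathrm{Ker}(f)$ is closed in $M$ because $E$ is non $M$-singular, so $M/\mathrm{Ker}(f)\hookrightarrow E$ is a nonzero non $M$-singular submodule, hence contains a uniform submodule $U$; then $E^{[M]}(U)$ is an indecomposable injective direct summand of $E$, and since $E$ is indecomposable, $E^{[M]}(U)=E$. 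But $U$ embeds in $M$ up to the pullback, so $U$ is (isomorphic to) a submodule of $M$, whence $E\cong E^{[M]}(U)$ is a summand of $\widehat{M}$. Applying Proposition \ref{1.16} to this summand completes the argument.

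Alternatively, and perhaps more cleanly, one can argue: by Corollary \ref{4} and Proposition \ref{51}, $\widehat{M}\cong\bigoplus_{i=1}^n E_i^{k_i}$ with the $E_i$ indecomposable injective, and the $M$-singular part plays no role since the $M/P_i$ are non $M$-singular (as in the proof of Proposition \ref{1.4}); any non $M$-singular indecomposable injective $E$ in $\sigma[M]$ has a nonzero map from $M$ (as $M$ is a generator), and such a map embeds a uniform submodule of $E$ into $M$, so $E\cong E^{[M]}(U)$ for some uniform $U\leq M$, forcing $E\cong E_i$ for some $i$; then Proposition \ref{1.16} gives $E$ is $FI$-simple. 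I expect the authors' proof to take essentially this second route, invoking Proposition \ref{1.16} directly after identifying $E$ with one of the indecomposable summands $E_i$ of $\widehat{M}$.
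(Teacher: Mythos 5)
Your overall strategy is exactly the paper's: reduce to Proposition \ref{1.16} by showing that every non $M$-singular indecomposable injective $E$ embeds in $\widehat{M}$. The difference is how the embedding is obtained. The paper gets it for free from the classification results it cites: by [Corollary 2.12, mauacc] the non $M$-singular indecomposable injectives in $\sigma[M]$ are, up to isomorphism, exactly the modules $E_1,\dots,E_n$ attached to the minimal primes, and by [Theorem 2.20, mauacc] $\widehat{M}\cong E_1^{k_1}\oplus\cdots\oplus E_n^{k_n}$; hence $E\cong E_i$ embeds in $\widehat{M}$ and Proposition \ref{1.16} applies. (Note that these citations need $M$ semiprime with ACC on annihilators, which holds here because a co-semisimple module is semiprime by the remark preceding Corollary \ref{1.13}.) Your second, ``cleaner'' route is essentially this, so the comparison is favorable; but your hands-on construction of the embedding contains one genuine misstep.

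The flawed step is ``$U$ embeds in $M$ up to the pullback, so $U$ is (isomorphic to) a submodule of $M$.'' From $0\neq f\colon M\to E$ you correctly get $M/Ker(f)\hookrightarrow E$ with $Ker(f)$ closed, but a submodule $U$ of $M/Ker(f)$ is a subquotient of $M$, not a submodule: its pullback $f^{-1}(U)$ surjects onto $U$ with kernel $Ker(f)$, and nothing yet identifies $U$ with a submodule of $M$. The repair is standard: since $f\neq 0$ and $Ker(f)$ is closed, $Ker(f)$ is not essential in $M$, so choose a complement $C$ of $Ker(f)$ in $M$; then $f|_C$ is monic, so $f(C)\cong C$ is a nonzero submodule of $E$, which is essential in $E$ because an indecomposable injective is uniform. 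Hence $E=\E(f(C))\cong\E(C)$, and $\E(C)$ can be taken inside $\widehat{M}$ (it is $M$-injective, hence a direct summand of $\widehat{M}$), giving the desired embedding $E\hookrightarrow\widehat{M}$; Proposition \ref{1.16} then finishes the proof. With this correction your argument is complete and is, in fact, a self-contained substitute for the paper's appeal to [Corollary 2.12, mauacc], at the cost of redoing a piece of that classification by hand.
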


\begin{proof}
By \cite[Corollary 2.12]{mauacc} there exists a bijective correspondence between minimal prime in $M$ submodules and non $M$-singular indecomposable injective modules in $\sigma[M]$, up to isomorphism. Suppose $P_1,...,P_n$ are the minimal prime in $M$ submodules and let $E_1,...,E_n$ be the representatives of classes of isomorphism of non $M$-singular indecomposable injective modules in $\sigma[M]$. Now, by \cite[Theorem 2.20]{mauacc} $\widehat{M}\cong E_1^{k_1}\oplus\cdots E_n^{k_n}$. 

Let $E\in \sigma[M]$ be a non $M$-singular indecomposable injective module. Then $E\cong E_i$ for some $1\leq i\leq n$, so $E$ embeds in $\widehat{M}$. Thus, by Proposition \ref{1.16} $E$ is $FI$-simple.
\end{proof}

\begin{cor}\label{1.18}
Let $M$ progenerator in $\sigma[M]$. Suppose $M$ is co-semisimple and a prime Goldie module. If there exists a non $M$-singular indecomposable injective module $E\in \sigma[M]$ such that $E$ is a duo module then $M$ is semisimple.
\end{cor}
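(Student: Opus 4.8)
The plan is to combine Corollary \ref{1.17} with the duo hypothesis on $E$ to force $E$ to be a simple module, and then transfer simplicity back to $M$ via the prime Goldie structure. First I would recall that since $M$ is a prime Goldie module, it has a unique minimal prime in $M$ submodule, namely $0$, so by \cite[Corollary 2.12]{mauacc} (or by Corollary \ref{4}, \ref{51}) there is up to isomorphism exactly one non $M$-singular indecomposable injective module in $\sigma[M]$, and $\widehat{M}\cong E^{k}$ for some $k>0$; moreover $E\cong \widehat{U}$ for any uniform submodule $U\leq M$. By Corollary \ref{1.17}, $E$ is $FI$-simple. The point of the duo hypothesis is that in a duo module \emph{every} submodule is fully invariant, so $FI$-simplicity of $E$ together with the duo property says $E$ has no proper nonzero submodules at all, i.e. $E$ is a simple module.

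Once $E$ is simple, I would argue that $\widehat{M}\cong E^k$ is semisimple, hence $M\hookrightarrow \widehat{M}$ is semisimple as well, which is the conclusion. Alternatively, and perhaps more cleanly: since $M$ is progenerator in $\sigma[M]$ and $M$ embeds essentially in $\widehat{M}=E^k$ with $E$ simple, $M$ has essential socle; then Corollary \ref{2.4} (applied with $M$ semiprime Goldie — a prime module is semiprime) gives that $M$ is semisimple artinian, in particular semisimple. I would phrase the final step through Corollary \ref{2.4} since that keeps everything internal to the paper's results and avoids re-deriving that a submodule of a semisimple module is semisimple inside $\sigma[M]$.

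The main obstacle — really the only nontrivial point — is justifying the passage from ``$E$ is $FI$-simple and duo'' to ``$E$ is simple.'' This needs the observation that in a duo module, every submodule is fully invariant, which is essentially the definition of duo; so an $FI$-simple duo module cannot have a proper nonzero submodule. I would state this explicitly as it is the crux. Everything else is bookkeeping: identifying $E$ as the unique non $M$-singular indecomposable injective, invoking Corollary \ref{1.17} for $FI$-simplicity, and invoking Corollary \ref{2.4} for the final semisimplicity conclusion. A secondary minor point to check is that $E$ is genuinely non $M$-singular as a hypothesis (it is, by assumption) so that Corollary \ref{1.17} applies, and that a prime module is semiprime so that Corollary \ref{2.4} applies; both are immediate.

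\begin{proof}
Since $M$ is a prime module, $0$ is the unique minimal prime in $M$ submodule. By \cite[Corollary 2.12]{mauacc} there is, up to isomorphism, exactly one non $M$-singular indecomposable injective module in $\sigma[M]$; by Corollary \ref{1.17} this module is $FI$-simple. As $E$ is a non $M$-singular indecomposable injective module, $E$ is (up to isomorphism) this unique module, so $E$ is $FI$-simple, and by Corollary \ref{4} and Proposition \ref{51}, $\widehat{M}\cong E^{k}$ for some $k>0$.

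Now $E$ is a duo module, so every submodule of $E$ is fully invariant. Combined with the $FI$-simplicity of $E$, this forces $E$ to have no proper nonzero submodules; that is, $E$ is a simple module. Hence $\widehat{M}\cong E^{k}$ is a semisimple module, and therefore $M$, which embeds in $\widehat{M}$, has essential (indeed semisimple) socle. Since a prime module is semiprime and $M$ is a Goldie module, Corollary \ref{2.4} applies and yields that $M$ is semisimple artinian. In particular $M$ is semisimple.
\end{proof}
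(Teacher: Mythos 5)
Your proposal is correct and follows essentially the same route as the paper: identify $E$ (via the uniqueness of the non $M$-singular indecomposable injective for a prime module) with the factor in $\widehat{M}\cong E^{k}$, use Corollary \ref{1.17} to get $FI$-simplicity, note that an $FI$-simple duo module is simple, and conclude that $\widehat{M}$, hence $M$, is semisimple. Your final detour through Corollary \ref{2.4} is a harmless variation; the paper simply observes that $M$, as a submodule of the semisimple module $\widehat{M}$, is semisimple.
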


\begin{proof}
We just have to notice that a $FI$-simple duo module is simple. Now, if $M$ is a prime module then $\widehat{M}\cong E^k$ with $E$ a non $M$-singular indecomposable injective module. Let $Q$ be a non $M$-singular indecomposable injective duo module, by Corollary \ref{1.17} $Q\cong E$ so $E$ is simple. Thus $\widehat{M}$ is semisimple, then $M$ is semisimple.
\end{proof}

\bibliographystyle{plain}
\bibliography{biblio}

\end{document}